\documentclass[leqno, 11pt, a4paper]{amsart}
\usepackage[backend=biber,url=false, isbn=false, doi=false]{biblatex}
\AtEveryBibitem{\clearfield{month}}
\AtEveryBibitem{\clearfield{day}}
\usepackage{amsthm, amsmath, dsfont, mathrsfs, graphicx, amssymb,eucal, hyperref}

\newtheorem{thm}{Theorem}

\newtheorem{prop}{Proposition}
\newtheorem{lem}{Lemma}
\newtheorem{rem}{Remark}
\theoremstyle{definition}
\newtheorem{defi}{Definition}
\newtheorem{ex}{Example}

\newcommand{\cR}{\mathcal{R}}

\newcommand{\cC}{\mathcal{C}}
\newcommand{\cZ}{\mathbf{Z}}
\newcommand{\R}{\mathbb{R}}
\newcommand{\C}{\mathscr{C}}
\newcommand{\Q}{\mathbb{Q}}
\newcommand{\E}{\mathbb{E}}
\newcommand{\N}{\mathbb{N}}
\newcommand{\Pro}{\mathbb{P}}
\newcommand{\cF}{\mathcal{F}}
\newcommand{\eps}{\varepsilon}
\newcommand{\cP}{\mathcal{P}}
\newcommand{\cM}{\mathcal{M}}
\newcommand{\supp}{\textsf{supp}}

\newcommand{\ox}{\overline{x}}
\newcommand{\oy}{\overline{y}}
\newcommand{\law}{\textsf{Law}}
\newcommand{\pr}{{\rm pr}}

\renewcommand{\epsilon}{\varepsilon}
\newcommand{\wto}{\rightharpoonup}
\newcommand{\cto}[1]{\xrightarrow[#1]{}}
\newcommand{\scal}[2]{\langle #1, #2 \rangle}
\newcommand{\dd}{\mathrm{d}}
\newcommand{\extendedR}{\R \cup \{+\infty\}}

\title[A land of monotone plenty, \textit{bis repetita}]{A land of monotone plenty, \textit{bis repetita}: from classical to weak optimal transport}
\author{Virginie Ehrlacher}
\address{CERMICS, Ecole nationale des ponts et chaussées, IPP, CNRS \& INRIA}
\email{virginie.ehrlacher@enpc.fr}
\author{Rodrigue Lelotte}
\address{SAMM, Université Paris 1 Panthéon-Sorbonne, Paris, France.}
\email{rodrigue.lelotte@univ-paris1.fr}
\author{Luca Nenna}
\address{Universit\'e Paris-Saclay, CNRS, Laboratoire de math\'ematiques d'Orsay, ParMA, Inria Saclay, 91405, Orsay, France \& Institut Universitaire de France (I.U.F.)}
\email{luca.nenna@universite-paris-saclay.fr}

\begin{document}
	
\maketitle

\begin{abstract}
The celebrated $c$-cyclical monotonicity property is shown to boil down to the zeroth-order optimality condition for the optimal transport problem. More precisely, we show that optimality is equivalent to the non-negativity of the linear transport cost functional on the radial cone of admissible perturbations. We then utilise this point of view to extend the $c$-cyclical monotonicity property to the weak optimal transport problem, for which it corresponds to the first-order optimality condition, namely to the non-negativity of the linearisation of the weak transport cost functional near the optimiser. Altogether, this sheds new light on this monotonicity concept. For both  classical and weak optimal transport, we show that this property characterises (under suitable assumptions) optimal transport plans. In the classical case, we recover known results of the literature but with revisited proofs.   
\end{abstract}

\setcounter{tocdepth}{2}
\tableofcontents

\section{Introduction}

The notion of \textit{$c$-cyclical monotonicity} occupies a central role in optimal transport theory. A transport plan is said to be $c$-cyclically monotone if, in essence, local re-routing of the mass on the support of the transport plan leads to an increase of the overall cost of transport. Under suitable assumptions, this property is both necessary and sufficient for optimality. We refer the reader to the recent survey of De Pascale, Kausamo, and Wyczesany \cite{depascale60YearsCyclic2025} for a broad overview of the subject.

\subsection{Contributions} 
The purpose of the present work is twofold. First, we revisit cyclical monotonicity from a perturbative perspective based on elementary optimality considerations. In particular, we show that this property characterises under suitable assumptions optimal transport plans. Although we recover known results of the literature, the proofs are revisited in light of our approach. Second, building on this approach, we extend the notion of $c$-cyclical monotonicity to the case of the \textit{weak} optimal transport problem by introducing a notion of monotonicity which, contrary to previously existing notions in the literature, genuinely recovers the classical $c$-cyclical monotonicity property as a special case. 

\subsection{\textit{The gist of it}}
Given two Polish spaces $X$ and $Y$, two probability measures $\mu \in \cP(X)$ and $\nu \in \cP(Y)$, and a Borel cost function $c:X\times Y\to \extendedR$, the classical optimal transport problem reads
$$
\inf_{\pi \in \Pi(\mu, \nu)}  \C(\pi), \qquad \C(\pi) :=  \int_{X \times Y} c(x, y)\, \pi(\dd x, \dd y),
$$
where $\Pi(\mu,\nu)$ denotes the set of transport plans with marginals $\mu$ and $\nu$. Our starting point is deceptively simple. If $\pi$ is an optimal transport plan and if $\eta$ is an admissible perturbation preserving the marginal constraints, that is a signed Radon measure $\eta \in \cM(X \times Y)$ so that $\pi + t\eta \in \Pi(\mu, \nu)$ for $t>0$ small enough, then it obviously holds
$$
\C(\pi+t\eta)\geq \C(\pi).
$$
This is just the \textbf{zeroth-order optimality condition}. In the classical optimal transport problem, the cost functional is linear, and therefore optimality is equivalent to the non-negativity condition
$$
\C(\eta)\geq 0
$$
for every admissible perturbation $\eta$. Now, it will follow from elementary manipulations that the $c$-cyclical monotonicity exactly rephrases as demanding that $\C$ be non-negative for a special kind of ``cyclical" perturbations $\eta$. \textit{The gist of our work is to show that $\C$ being non-negative on the set made of these special perturbations is sufficient for it to be non-negative over the entire set of admissible perturbations and \textit{vice-versa}}, thus yielding sufficiency and necessity of $c$-cyclical monotonicity for optimality in the optimal transport problem. This gives a new and conceptually transparent route to the equivalence between optimality and cyclical monotonicity. Moreover, the same approach can be used to derive the analogue of $c$-cyclical monotonicity in other types of optimal transport problems.

Indeed, a central contribution of the paper concerns the weak optimal transport problem. We recall that this problem reads
$$
\inf_{\pi\in\Pi(\mu,\nu)}
\int_X C(x,\pi_x)\,\mu(\dd x),
$$
where $(\pi_x)_{x\in X}$ denotes the disintegration of $\pi$ with respect to its first marginal and the function $C:X\times \cP(Y)\to \extendedR$ is (typically) convex in its second argument.

A notion coined as \textit{$C$-monotonicity} was introduced in the literature by Backhoff-Veraguas, Beiglb\"ock, and Pammer \cite{backhoff-veraguasExistenceDualityCyclical2019} and shown to be necessary and sufficient for optimality under suitable assumptions. However, this notion does not fully recover classical cyclical monotonicity when the weak transport problem reduces to the ordinary optimal transport problem (see Example~\ref{ex:c-mon-limitation} \textit{infra}). One of the contributions of the present work is to introduce a notion of monotonicity for weak optimal transport, which we call \textit{$C$-cyclical monotonicity}, that genuinely extends the classical notion of $c$-cyclical monotonicity to the weak setting. This notion is formulated in terms of the gradients (or subgradients) of the weak cost with respect to its measure argument. Concretely, if $\rho\mapsto C(x,\rho)$ is differentiable in an appropriate sense (see Section~\ref{sec:diff}), then our monotonicity condition takes the form
$$
\boxed{\sum_{i=1}^n \nabla_\rho C(x_i,\pi_{x_i})(y_i) \leq \sum_{i=1}^n \nabla_\rho C(x_i,\pi_{x_i})(y_{i+1}).}
$$
for all $n \in \N$ and all families $(x_1, y_1), \dots, (x_n, y_n) \in \Gamma$ where $\Gamma \subset X \times Y$ is a measurable set so that $\pi(\Gamma) =1$. Here, we let $y_{n+1} := y_1$. This property enjoys several important features.  First, it exactly recovers the classical $c$-cyclical monotonicity when the weak problem boils down to the classical one, namely when the cost of transport is given by
$$
C(x,\rho) :=\int_Y c(x,y)\,\rho(\dd y).
$$
Indeed, in this case the gradient is given by $\nabla_\rho C(x,\rho)(y)=c(x,y)$, and one recognises that the above condition is exactly the $c$-cyclical monotonicity property. Second, we show that $C$-cyclical monotonicity is equivalent to optimality under assumptions comparable to those used in the classical setting. 

Our $C$-cyclical monotonicity property is derived by looking at the \textbf{first-order optimality condition} for the weak optimal transport. Indeed, the objective of this problem is
$$
\C(\pi) := \int_{X} C(x, \pi_x) \mu(\dd x),
$$
which, contrary to the classical optimal transport problem, is no longer linear, so we need to go to the first-order to say anything meaningful. Whereas in the classical problem, optimality is encoded by the non-negativity of the cost on admissible perturbations, in the weak setting, the same mechanism survives after replacing the transport cost by the linearised quantity
$$
\eta\mapsto \int_{X\times Y} \nabla_\rho C(x,\pi_x)(y)\,\eta(\dd x,\dd y).
$$
The $C$-cyclical monotonicity introduced above is then derived from the non-negativity of the above functional once evaluated at these special ``cyclical'' perturbations already mentioned above. 

\subsection{Previous works} 
It is possible that part of our arguments \textit{may} not be completely new to some readers already familiar with the subject. In a way, a part of what we present in this work can be seen as a way of explaining what cyclical monotonicity is all about in a different perspective with a different terminology, thus shedding a new kind of light on this concept.  Whether or not these ideas are known to specialists, they are certainly not widely familiar to a broader audience. During our investigation, we came across a couple of works of S. Bianchini and L. Caravenna  \cite{bianchiniOptimalityCcyclicallyMonotone2010, bianchiniExtremalityUniquenessOptimalityb} which share some common features with our own work here and that should be readily advertised. We shall also mention the work of \textcite{beiglbockCyclicalMonotonicityErgodic2015} which utilises the ergodic theorem to prove sufficiency of the $c$-cyclical monotone property for optimality somehow in the same way as we do. Also -- as the title of our paper is obviously a nod to this work — we shall also mention the seminal paper \cite{beiglbockLandMonotonePlenty2019b} — and also \cite{zaevMongeKantorovichProblem2015}. However, the unified formalism presented here and its use as a systematic recipe to derive the monotonicity property to other type of optimal transports is  decidedly new --- at least to the best of our knowledge.

\subsection{Outline of the paper}
Section \ref{sec:OT} revisits the $c$-cyclical monotonicity property in the classical optimal transport through the perturbative point of view. We introduce the radial cone of admissible perturbations and show that optimality is equivalent to the zeroth-order optimality condition on this cone. We then relate this condition to finite optimality and to $c$-cyclical monotonicity. Section \ref{sec:WOT} is devoted to weak optimal transport. We derive the first-order optimality conditions for weak transport problems, and use them to define the notion of $C$-cyclical monotonicity and establish its equivalence with optimality under suitable assumptions. We also discuss the case of the barycentric cost (which is non-differentiable). 

\section{A perturbative approach to the classical optimal transport}\label{sec:OT}

\subsection{Setting and notations} 
For a Polish space $Z$, we write $\cM(Z)$ the set of finite (and signed) Radon measures on $Z$. By a \textit{finite} Radon measure $\eta$, it is meant that $\Vert \eta \Vert_{TV} < \infty$ where the total variation norm $\Vert \eta \Vert_{TV}$ is defined as $\Vert \eta \Vert_{TV} := |\eta|(Z)$ where $|\eta| := \eta_+ + \eta_-$ with $\eta_+$ and $\eta_-$ being respectively the positive and negative parts of the signed measure $\mu$. The space $\cM(Z)$ is equipped with the \textit{vague topology}, that is in duality with $C_0(Z)$, \textit{i.e.} the set of real-valued continuous functions on $Z$ vanishing at infinity --- when $Z$ is compact, this set is simply the set of all continuous functions. We recall that finite radii balls $\{ \eta \in \cM(Z): |\eta|(Z) \leq r\}$ for $0 \leq r < \infty$ are metrisable in this topology provided that $Z$ is Polish and furthermore locally compact, see \textit{e.g.} \cite[Theorem 3.29]{brezisFunctionalAnalysisSobolev2011}. Finally, we write $\cP(Z) \subset \cM(Z)$ be the subset of probability measures.

\medskip

Let $\mu \in \cP(X)$ and $\nu \in \cP(Y)$ be probability measures  on Polish (and locally compact) spaces $X$ and $Y$, and let  $c : X  \times Y \to \extendedR$ be a Borel-measurable cost of transport possibly assuming $+\infty$ as value. We consider the classical optimal transport problem 
\begin{equation}\label{OT}\tag{OT}
	\min_{\pi \in \Pi(\mu, \nu)}  \C(\pi), \qquad \C(\pi) :=  \int_{X \times Y} c(x, y)\, \pi(\dd x, \dd y)
\end{equation}
where $\Pi(\mu, \nu)$ is the standard set of \textit{transport plans}, that is those probability measures $\pi \in \cP(X \times Y)$ so that the first marginal of $\pi$ (\textit{i.e.} with respect to $X$) is $\mu$ and such that the second marginal of $\pi$ (\textit{i.e.} with respect to $Y$)  is $\nu$. This also rewrites $\pr_1^\sharp \pi = \mu$ and $\pr_2^\sharp \pi = \nu$ where $\pr_1 : X \times Y \to X$ and $\pr_2 : X \times Y \to Y$ are the canonical surjections of the Cartesian product $X \times Y$ onto its factors $X$ and $Y$ — and where we use the standard notation $f^\sharp \pi$ for any measurable map $f : X \times Y \to Z$  to mean the push-foward measure defined as $f^\sharp \pi(A) := \pi(f^{-1}(A))$ for all Borel sets $B \subset Z$.

\subsection{The zeroth-order optimality condition for \eqref{OT}}
We start with the following simple observation. If $\pi \in \Pi(\mu, \nu)$ is optimal for the optimal transport problem \eqref{OT} and $\pi$ has finite transport cost, meaning that $\C(\pi) < \infty$, then it must be that $\C(\pi + t \eta) \geq \C(\pi)$ for all $t \in \R$ and $\eta \in \cM(X \times Y)$ such that $\pi + t \eta$ is still an admissible transport plan, that is $\pi + t \eta \in \Pi(\mu, \nu)$. By linearity of the cost functional $\C$, this is equivalent to saying that $\C(\eta) \geq 0$ for such perturbations $\eta$ provided that $t > 0$. Let us then introduce for $\pi \in \Pi(\mu, \nu)$ the set $\cR_\pi \subset \cM(X \times Y)$ defined as
\begin{equation}\label{radial_cone}
	\boxed{\cR_\pi := \bigg\{\eta \in \cM(X \times Y) : \pi + t \eta \in \Pi(\mu, \nu) \text{ for some } t > 0 \bigg\}.}
\end{equation} 
Then, the previous statement rephrases as follows: \textit{If $\pi$ is optimal for the optimal transport problem \eqref{OT} and has finite transport cost, that is $\C(\pi) < \infty$, then $\C(\eta) \geq 0$ for all $\eta \in \cR_\pi$}. In the context of convex analysis, the set $\cR_\pi$ is referred to as the \textit{radial cone} to the (convex) set $\Pi(\mu, \nu)$ at the point $\pi$. We refer to Remark~\ref{rem:zeroth-order-opt-cond} and \eqref{eq:radial-cone} \textit{infra} for a general definition of the radial cone to a convex set. Furthermore, the converse statement holds as well: \textit{If $\pi \in \Pi(\mu, \nu)$ is such that $\C$ is non-negative on $\cR_\pi$, then $\pi$ is optimal in \eqref{OT}} — again, provided that $\C(\pi)< \infty$. Indeed, for all $\gamma \in \Pi(\mu, \nu)$, one has $\C(\gamma) = \C(\gamma - \pi + \pi) = \C(\gamma - \pi) + \C(\pi) \geq \C(\pi)$ since $\eta := \gamma-\pi$ is  obviously an element of $\cR_\pi$. Thus $\pi$ is optimal. Altogether, we obtain the following proposition which is just the \textbf{zeroth-order optimality condition} for \eqref{OT}:
\begin{prop}\label{prop:zero-order-cond}
	A transport plan $\pi \in \Pi(\mu, \nu)$ with finite cost $\C(\pi) < \infty$ is optimal in \eqref{OT} if and only if $\C(\eta) \geq 0$ for all $\eta \in \cR_\pi$, where $\cR_\pi$ is the radial cone to the feasible set $\Pi(\mu, \nu)$ at the point $\pi$ as defined in \eqref{radial_cone}.
\end{prop}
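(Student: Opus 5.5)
The argument is the elementary one sketched just before the statement, and I would write it out carefully, paying attention to the $\pm\infty$ issues caused by $c$ taking the value $+\infty$. Throughout, for $\eta \in \cR_\pi$ I interpret $\C(\eta)$ as $\int c\,\dd\eta_+ - \int c\,\dd\eta_-$. The first task is to check that this is well defined, i.e.\ not of the form $\infty-\infty$. Fix $t>0$ with $\gamma := \pi + t\eta \in \Pi(\mu,\nu)$. Then $t\eta = \gamma - \pi$, and since $\gamma \ge 0$, the negative part satisfies $t\eta_- \le \pi$. As $\C(\pi)<\infty$, the integral $\int c\,\dd\eta_-$ is finite. (I will assume $c$ is bounded below, or at least that $c^-$ is integrable against plans, as is standard. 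Otherwise one works with $\int c\,\dd\eta_-$ in the extended sense.) Hence $\C(\eta) \in \extendedR$ is well defined, and linearity gives
\[
\C(\gamma) = \C(\pi) + t\,\C(\eta).
\]
This holds as an identity in $\extendedR$ because $\C(\pi)$ is finite.

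\textbf{Necessity.} Suppose $\pi$ is optimal and $\eta \in \cR_\pi$, with $t>0$ chosen as above. Optimality gives $\C(\pi+t\eta) \ge \C(\pi)$. By the identity, $t\,\C(\eta) \ge 0$, and since $t>0$ this yields $\C(\eta)\ge 0$. If $\C(\pi + t\eta)=+\infty$, then $\C(\eta)=+\infty\ge 0$ trivially.

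\textbf{Sufficiency.} Suppose $\C(\eta)\geq 0$ on $\cR_\pi$, and let $\gamma \in \Pi(\mu,\nu)$ be arbitrary. Set $\eta := \gamma - \pi \in \cM(X\times Y)$; it has finite total variation, at most $2$. With $t=1$ we get $\pi + \eta = \gamma \in \Pi(\mu,\nu)$, so $\eta \in \cR_\pi$. The identity then gives $\C(\gamma) = \C(\pi) + \C(\eta) \ge \C(\pi)$, so $\pi$ is optimal.

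The only genuine obstacle is the bookkeeping above: giving meaning to $\C(\eta)$ for signed $\eta$ when $c$ is unbounded, and justifying the linearity $\C(\pi+t\eta)=\C(\pi)+t\C(\eta)$. Both are handled by the domination $t\eta_-\le\pi$ together with the finiteness of $\C(\pi)$. Once this is done, everything else is immediate.
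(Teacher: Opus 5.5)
Your proof is correct and is essentially the paper's argument. Necessity comes from $\C(\pi+t\eta)=\C(\pi)+t\,\C(\eta)\ge\C(\pi)$, and sufficiency from taking $\eta:=\gamma-\pi\in\cR_\pi$ with $t=1$. What you add is the bookkeeping that the paper's bare appeal to linearity skips when $c$ may take the value $+\infty$: the domination $t\eta_-\le\pi$, plus a lower bound on $c$ (or integrability of $c^-$), which the paper leaves implicit.
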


\begin{rem}\label{rem:zeroth-order-opt-cond}
	Proposition~\ref{prop:zero-order-cond} is just one particular instance of a much general situation. Let $X$ be a normed vector space, $L : X \to \extendedR$ be a linear function on $X$ and $C \subset X$ be a convex set. Let us consider the minimisation problem
	\begin{equation}\label{LP}
		\min_{x \in C} Lx.
	\end{equation}
	Then, $x \in C$ is optimal for \eqref{LP} if and only if  (provided that $Lx < \infty$) $Lh \geq 0$ for all $h \in R_C(x)$, where the \textit{radial cone} $R_C(x) \subset X$ (``\textit{to the convex set $C$ at the point $x$'}') is defined as
	\begin{equation}\label{eq:radial-cone}
		\cR_C(x) := \left\{h \in X : x + th \in C \text{ for some $t > 0$}\right\}.
	\end{equation}
	(We remark passing by that by convexity of $C$, if there exists $t_0 > 0$ such that $x + t_0 h \in C$ then $x + th \in C$ for all $0 \leq t \leq t_0$.) Now, let us assume that $x$ is optimal, and let $h \in \cR_C(x)$. Then, there exists a small enough $t > 0$ such that $x + th \in C$. By optimality of $x$ (and the fact that $Lx < \infty$), we have $L(x + th) \geq L(x)$ and thus $L(h) \geq 0$ by linearity of $L$. The other direction is also straightforward. Indeed, assume that $L(h) \geq 0$ for all $h \in \cR_C(x)$. Then for all $y \in C$, we have $L(y) = L(x) + L(y -x) \geq L(x)$ since $y-x \in \cR_C(x)$, and thus $x$ is optimal for the problem \eqref{LP}.
\end{rem}

Let us now characterise the radial cone $\cR_\pi$ defined above in \eqref{radial_cone} explicitly. We have:
\begin{lem}[Characterisation of the radial cone $\cR_\pi$]\label{lem:car_r_pi}
	Let any $\pi \in \Pi(\mu, \nu)$. The radial cone $\cR_\pi$ to the set of transport plan $\Pi(\mu, \nu)$ at the point $\pi$ — as defined in \eqref{radial_cone} — is characterised as the set of \emph{signed} Radon measure $\eta = \eta^+ - \eta^-$ such that $\eta^+$ and $\eta^-$ share common marginals, that is $\pr_i^\sharp \eta^+ = \pr_i^\sharp \eta^-$ for $i = 1, 2$, and such that $\eta^- \ll \pi$ and $\frac{d \eta^-}{d \pi} \in L^\infty(\pi)$. 
\end{lem}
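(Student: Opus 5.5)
We prove the two inclusions separately. Throughout, $\eta = \eta^+ - \eta^-$ denotes the Jordan decomposition, so $\eta^+ \perp \eta^-$.

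\emph{Necessity.} Let $\eta \in \cR_\pi$ and fix $t>0$ with $\gamma := \pi + t\eta \in \Pi(\mu,\nu)$.

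First, the marginals. Since $\pi$ and $\gamma$ have the same marginals, $\pr_i^\sharp \eta = t^{-1}(\pr_i^\sharp\gamma - \pr_i^\sharp\pi) = 0$ for $i=1,2$. Hence $\pr_i^\sharp \eta^+ = \pr_i^\sharp \eta^-$ by linearity of the push-forward.

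Second, the density bound. Take a Hahn decomposition $X\times Y = P \cup N$ with $\eta^-$ concentrated on $N$ and $\eta^+(N) = 0$. For any Borel $A$, positivity of $\gamma$ gives
$$
0 \le \gamma(A\cap N) = \pi(A\cap N) - t\,\eta^-(A).
$$
Hence $\eta^-(A) \le t^{-1}\pi(A\cap N) \le t^{-1}\pi(A)$. This yields $\eta^- \ll \pi$ and, by Radon--Nikodym, $0 \le \frac{d\eta^-}{d\pi} \le 1/t$ $\pi$-a.e.

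\emph{Sufficiency.} Let $\eta$ satisfy the stated conditions and set $f := \frac{d\eta^-}{d\pi} \in L^\infty(\pi)$. Choose $t := 1/\|f\|_{L^\infty(\pi)}$, or any $t>0$ if $f=0$.

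Then
$$
\pi + t\eta = (1 - tf)\pi + t\eta^+ \ge 0,
$$
since $1 - tf \ge 0$ $\pi$-a.e. It is a finite Radon measure. Its marginals are $\pr_i^\sharp \pi + t(\pr_i^\sharp\eta^+ - \pr_i^\sharp\eta^-) = \pr_i^\sharp\pi$, namely $\mu$ and $\nu$. In particular its total mass is $1$, so $\pi + t\eta \in \Pi(\mu,\nu)$ and $\eta \in \cR_\pi$.

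\emph{Main subtlety.} The only delicate step is the density bound in the necessity part. One must localise on the negative set $N$ via the Hahn decomposition, since a bound of the form $t\eta \ge -\pi$ on all sets yields the domination only after restricting to where $\eta^-$ lives. Everything else is linearity of push-forward.

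One may also remark that the stated condition need not be read with the Jordan decomposition. If $\eta = \eta_1 - \eta_2$ for nonnegative measures $\eta_1,\eta_2$ with equal marginals and $\eta_2 \le C\pi$, then $\eta^- \le \eta_2 \le C\pi$. The same sufficiency argument, with $\eta_2$ in place of $\eta^-$, then goes through.
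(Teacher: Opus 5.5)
Your proof is correct and follows the same two-inclusion argument as the paper: zero marginals plus positivity of $\pi + t\eta$ for $t \le \Vert \frac{\dd \eta^-}{\dd \pi}\Vert_{L^\infty(\pi)}^{-1}$ in one direction, and zero marginals plus the bound $t\eta^-(A) \le \pi(A)$ in the other. Your Hahn-decomposition localisation on the negative set $N$ is what actually justifies that bound; the paper reads it directly off positivity of $\pi_t$, which without localising only gives $t\eta^-(A) \le \pi(A) + t\eta^+(A)$.
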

Here, we write $\eta^-$  the negative part of $\eta$ and correspondingly $\eta^+$ its non-negative part. We use the standard notation $\eta^- \ll \pi$ to mean that $\eta^-$ is absolutely continuous with respect to $\pi$, and we write $\frac{\dd \eta^-}{\dd \pi}$ the Radon-Nykodym density of $\eta^-$ with respect to $\pi$. We remark that, in Lemma~\ref{lem:car_r_pi}, the fact that $\eta^+$ and $\eta^-$ share common marginals also rephrases as $\pr_i^\sharp \eta =0$ for $i = 1, 2$, that is the (signed) measure $\eta$ has \textit{zero} marginals. 

\begin{proof}[Proof of Lemma~\ref{lem:car_r_pi}]
	Let $\cR'_\pi \subset \cM(X \times Y)$ be the set 
		\begin{equation}\label{eq:R_pi}
				\cR'_\pi = \left\{\eta \in \cM(X \times Y) \,\middle|\, \pr_i^\sharp \eta = 0 \text{ for }i = 1,2, \quad \frac{\dd \eta^-}{\dd \pi} \in L^\infty(\pi) \right\}.
		\end{equation}
		— where it is implicitly understood that $\eta^- \ll \pi$. Let us show that $\cR_\pi = \cR'_\pi$, which is exactly the thesis of Lemma~\ref{lem:car_r_pi}. Let $\eta \in \cR'_\pi$ and let us define $\pi_t := \pi + t \eta$. We need to show that for a small enough $t > 0$, the measure $\pi_t$ is in the set of transport plans $\Pi(\mu, \nu)$. This means showing that $\pi_t$ is positive and that it verifies the marginal constraints -- note that, if we show these two properties hold, it will immediately follow that this measure is in fact a \textit{probability} measure. Let us decompose $\eta := \eta^+ - \eta^-$. By assumption, the negative part $\eta^-$ is absolutely continuous with respect to $\pi$ and its Radon-Nykodym density is in $L^\infty(\pi)$. Therefore, $\pi_t$ is positive for all $0 \leq t \leq t_0$ where we define $t_0^{-1} := \Vert \frac{d \eta^-}{d \pi} \Vert_{L^\infty(\pi)} > 0$. Now, the fact that $\pi_t$ verifies the marginal constraints is immediate from the fact that $\pr_i^\sharp \eta = 0$ for all $i = 1, 2$. This shows that $\cR'_\pi \subset \cR_\pi$. We now prove the converse inclusion. Let $\eta \in \cR_\pi$. Then, there exists a (small enough) $t > 0$ such that $\pi_t \in \Pi$, where we let $\pi_t$ as above. This (again) immediately implies that $\pr_i^\sharp \eta = 0$ for all $i = 1, 2$. Furthermore, because $\pi_t$ must be a positive measure, it must be that $t \eta^-(A) \leq \pi(A)$ for all Borel sets $A \subset X \times Y$, which means that $\eta^-$ is absolutely continuous with respect to $\pi$ and that its Radon-Nykodym density is uniformly bounded with respect to $\pi$ — and moreover $\Vert \frac{d\eta^-}{d \pi} \Vert_{L^\infty(\pi)} \leq t^{-1}$. Hence $\cR_\pi = \cR_\pi'$ and the thesis of Lemma~\ref{lem:car_r_pi} follows.
	\end{proof}

\subsection{On $c$-cyclical monotonicity and its related notions}
In this section, let us recall what $c$-cyclical monotonicity is all about and how it can be related to the perturbative approach outlined above. First, let us recall the mere definition of $c$-cyclical monotonicity:
\begin{defi}\label{def:c-cM}
	A set $\Gamma \subset X \times Y$ is said to be \textit{$c$-cyclically monotone} if for all couples of points $(x_1, y_1), \dots, (x_n, y_n)$ in $\Gamma$ — where $n \in \N$ is arbitrary (but finite) — we have
	\begin{equation}\label{eq:c-cM}
		\sum_{i = 1}^n c(x_i, y_i) \leq \sum_{i = 1}^n c(x_i, y_{i+1}),
	\end{equation}
	where we let $y_{n+1} := y_1$. A transport plan $\pi \in \Pi(\mu, \nu)$ is said to be $c$-cyclically monotone if it is \textit{concentrated }on a $c-$cyclically monotone set, that is if there exists $\Gamma \subset X \times Y$ which is $c$-cyclically monotone such that $\pi(\Gamma) = 1$.
\end{defi}
It is known that a property equivalent to that of $c-$cyclical monotonicity is the following:
\begin{defi}\label{def:fin-opt}
	A transport plan $\pi \in \Pi(\mu, \nu)$ is said to be \textit{finitely optimal} if for all couples of positive measures $\alpha$ and $ \alpha' \in \cM(X \times Y)$ with \textit{finite} supports sharing common marginals — that is $\pr_i^\sharp \alpha = \pr_i^\sharp \alpha'$ for $i = 1, 2$ — and such that the support of $\alpha$ is contained in that of $\pi$, it holds that $\C(\alpha) \leq \C(\alpha')$.
\end{defi} 
The fact that this property\footnote{This notion goes back to at least the work of \textcite{beiglbockLandMonotonePlenty2019b} — and \cite{beiglbockProblemOptimalTransport2016} in the case of the martingale optimal transport setting. Nevertheless, the journal version (\& first preprinted version) of \cite{beiglbockLandMonotonePlenty2019b} refers the reader to \cite[Ex. 2.21]{villani_optimal_2009} — whereas the second preprinted version of same article also refers to \cite{GradientFlows2008a} when introducing this notion. Bottom line, finite optimality has probably been around as \textit{folkloric knowledge} for quite a while.} implies $c$-cyclical monotonicity is rather straightforward. Indeed, let us assume that $\pi \in \Pi(\mu, \nu)$ is finitely optimal, and let $(x_1, y_1), \dots, (x_n, y_n) \in \Gamma$ for $n \in \N$ where $\Gamma := \supp(\pi)$. Then, let us define the positive and finitely supported measures $\alpha := \sum_{i = 1}^n \delta_{(x_i, y_i)}$ and $\alpha' := \sum_{i = 1}^n \delta_{(x_i, y_{i+1})}$ where we let $y_{n+1} := y_1$. By construction, we have $\supp(\alpha) \subset \Gamma$. It is also evident that $\alpha$ and $\alpha'$ share the same marginals — since $y_{n+1} = y_1$. Therefore, finite optimality says that $\C(\alpha) \leq \C(\alpha')$. But $\C(\alpha) = \sum_{i=1}^n c(x_i, y_i)$ and $\C(\alpha') = \sum_{i=1}^n c(x_i, y_{i+1})$ so that we exactly recover the $c$-cyclical monotonicity property.  The proof of the converse statement, namely that $c$-cyclical monotonicity is equivalent to the finite optimality property is to be found in \cite[Prop. 1.6]{pascaleSufficiencyCyclicalMonotonicity2025} or yet \cite{depascale60YearsCyclic2025} --- see also Remark~\ref{rem:equiv-fin-opt-c-cm}.

	\textit{Finite optimality} of Definition~\ref{def:fin-opt} and the \textit{zeroth-order optimality condition} shown in Proposition~\ref{prop:zero-order-cond} are obviously resembling. In the case of finite optimality, if we let $\eta := \alpha' - \alpha$ where $\alpha$ and $\alpha'$ are as in Definition~\ref{def:fin-opt}, then we see that $\eta$ is a specific kind of perturbation of the constraint set $\Pi(\mu, \nu)$ at the point $\pi$. Indeed, since $\alpha$ and $\alpha'$ share common marginals, the measure $\pi + \eta$ is a ``coupling'' of $\mu$ and $\nu$, in the sense that its marginals are precisely $\mu$ and $\nu$. Nevertheless, we stress out that it need \textit{not} be a rightful element of the radial cone $\cR_\pi$, because $\alpha$ is purely atomic and therefore has no reason \textit{a priori} to be absolutely continuous with respect to $\pi$. Another way to express this is by the fact that $\pi + \eta$ may not be a \textit{positive} measure in this situation.

	On the other side, in terms of the vocabulary of \cite{beiglbockLandMonotonePlenty2019b}, this means that the elements of the radial cone $\eta \in \cR_\pi$ are precisely those (signed) measure such that $\eta = \eta^+ - \eta^-$ and $\eta^+$ is a \textbf{competitor} for $\eta^-$. In essence, $\eta^+$ represents a re-routing of some amount of mass taken from the transport plan $\pi$, which is understood from the fact that $\eta^-$ is absolutely continuous with respect to $\pi$, that is $\eta^- \ll \pi$. Again, the difference here with the notion of competitors of \cite{beiglbockLandMonotonePlenty2019b} is that it is demanded here that $\eta^-$ be absolutely continuous, whereas in the case of finite optimality, it is demanded that $\eta^-$ be finitely supported inside the support of $\pi$.

\medskip

\begin{defi}[Finite perturbations]
    Let us define $\cF_\pi \subset \cM(X \times Y)$ the set of perturbations pertaining to finite optimality, namely the set of (signed) purely atomic measures $\eta$ with \textit{finite} support — meaning that $\supp(|\eta|)$ is a finite set where we recall that $|\eta| = \eta^+ + \eta^-$ — such that $\pr_i^\sharp \eta = 0$ for $i = 1, 2$ and such that $\supp(\eta^-) \subset \supp(\pi)$. In the language of \cite{beiglbockLandMonotonePlenty2019b}, this means that $\eta^+$ is a competitor to $\eta^-$. We will refer to elements of $\cF_\pi$ as \textit{finite perturbations}.
\end{defi}
By construction, it is obvious that \textbf{\textit{finite optimality as in Definition~\ref{def:fin-opt} is equivalent to demanding that $\C(\eta) \geq 0$ for all $\eta \in \cF_\pi$}}. We can also define the subset $\cC_\pi$ of $\cF_\pi$ of \textit{cyclical perturbations}, to wit the set of (signed) purely atomic measures $\eta$ of the form
	$$
	\eta = \frac1n \sum_{i = 1}^n (\delta_{(x_i, y_{i+1})} - \delta_{(x_i, y_i)}).
	$$
for some $n \in \N$ and family of points $(x_1, y_1), \cdots, (x_n, y_n) \in \supp(\pi)$ with $y_{n+1} := y_1$. It is then obvious that the \textbf{\textit{$c$-cyclical monotonicity property is exactly equivalent to the fact that $\C(\eta) \geq 0$ for all $\eta \in \cC_\pi$.} }

\begin{rem}[Equivalence between finite optimality and $c$-cyclical monotonicity]\label{rem:equiv-fin-opt-c-cm}
    We explained above that \emph{finite optimality} implies $c$-cyclical monotonicity. Within our language, this trivially follows from the fact that $\cC_\pi$ is a subset of $\cF_\pi$. The reverse statement also holds, and we refer the reader to \cite[Appendix D]{depascale60YearsCyclic2025}. We stress out that this necessitates no assumptions whatsoever on the cost of transport $c$. In fact, the rationale found in \cite{depascale60YearsCyclic2025} relies on purely algebraic manipulations, and in fact it can be stated abstractly as follows: \emph{If $L : \cM(X \times Y) \to \extendedR$ is a (not necessarily continuous) linear functional, and if $L(\eta) \geq 0$ for all cyclical perturbations $\eta \in \cC_\pi$} --- provided $L\eta^- < \infty$ so that the quantity $L(\eta)$ makes sense as an element of $\R \cup \{+\infty\}$ --- \emph{then $L(\eta) \geq 0$ for all finite perturbations $\eta \in \cF_\pi$} --- again, so that $L(\eta^-) < \infty$. Here, in our present context, the linear function $L$ is simply given by $\C$ the cost functional of \eqref{OT}.
\end{rem}

The game will be to show that (under suitable assumptions) \textbf{\textit{the cost functional $\C$ being non-negative over the radial cone $\cR_\pi$ is equivalent to it being non-negative over the set of finite perturbations $\cF_\pi$ or equivalently on the set of cyclical perturbations $\cC_\pi$}}. We will first show (under suitable assumptions) that $c$-cyclical monotonicity is necessary for optimality in Section~\ref{sec:nec-ot} (``\textit{If $\C$ is non-negative over $\cR_\pi$, then it is non-negative over $\cC_\pi$ (or equivalently over $\cF_\pi$)}'') then prove that it is sufficient in Section~\ref{sec:suf-ot} (``\textit{If $\C$ is non-negative over $\cC_\pi$ (or equivalently over $\cF_\pi$), then it is non-negative over $\cR_\pi$}''). In fact, we will show every element of the radial cone $\cR_\pi$ is an accumulation point of $\cF_\pi$ (see Proposition~\ref{prop:density} \textit{infra}). This last property is actually somehow completely agnostic to the cost of transport, but allows to retrieve the equivalence between optimality and $c$-cyclical monotonicity under appropriate continuity assumptions on the cost of transport $c$ (see Proposition~\ref{prop:sufficiency-cont} \textit{infra}).

\subsection{\textit{Optimality implies $c$-cyclical monotonicity} ($\Rightarrow$)}\label{sec:nec-ot}
We start by showing in our framework that optimality implies $c$-cyclical monotonicity. The most general result in this direction is that of \textcite{beiglbockOptimalBetterTransport2009} under the sole assumption that the cost of transport $c : X \times Y \to [0, \infty]$ be Borel-measurable. Here, we revisit the proof that optimality implies $c$-cyclical monotonicity under the slightly stronger assumption that $c$ is integrable with respect to $\mu \otimes \nu$:

\begin{thm}\label{thm:best-thm-necessary}
	Let $c : X \times Y \to \extendedR$ be Borel-measurable so that $c$ is (locally) integrable with respect to $\mu \otimes \nu$. If $\pi \in \Pi(\mu, \nu)$ is an optimal transport plan with $\C(\pi) < \infty$, then $\pi$ is $c$-cyclically monotone.
\end{thm}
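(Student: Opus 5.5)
The plan is to argue by contradiction through Proposition~\ref{prop:zero-order-cond}: a failure of $c$-cyclical monotonicity will be converted into an element $\eta\in\cR_\pi$ with $\C(\eta)<0$. Write $Z:=X\times Y$. Since $\C(\pi)<\infty$, the set $\{c=+\infty\}$ (and the set where $c=-\infty$, if $c$ is only locally integrable) is $\pi$-null, so we may discard it. For each $n\in\N$ consider the Borel set of ``bad cycles''
\begin{equation*}
B_n := \Big\{ (z_1,\dots,z_n)\in Z^n : \textstyle\sum_{i=1}^n c(x_i,y_{i+1}) < \sum_{i=1}^n c(x_i,y_i) \Big\}, \qquad z_i=(x_i,y_i),\ y_{n+1}:=y_1 .
\end{equation*}
The aim is to find a Borel $\Gamma\subset Z$ with $\pi(\Gamma)=1$ and $\Gamma^n\cap B_n=\emptyset$ for every $n$. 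Such a $\Gamma$ is $c$-cyclically monotone by Definition~\ref{def:c-cM}.

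The key tool is the following dichotomy for Borel sets in a product of Polish spaces. Either $B_n$ is contained in a union $\bigcup_{i=1}^n \pr_{Z,i}^{-1}(N_i)$ of cylinders over $\pi$-null sets $N_i\subset Z$, or there is a nonzero positive measure $\gamma$ concentrated on $B_n$ whose $n$ marginals are all dominated by $\pi$ (this is the Kellerer / Beiglb\"ock--Goldstern--Maresch--Schachermayer lemma, proved via Kellerer's duality for Borel sets). If the first alternative holds for every $n$, then $\Gamma:=Z\setminus\bigcup_{n,i}N_i^{(n)}$ has full measure and avoids all the $B_n$, and the proof is finished. It therefore remains to rule out the second alternative.

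Suppose such a $\gamma$ exists on $B_n$. Since $B_n=\bigcup_M B_n\cap\{\sum_i c(x_i,y_{i+1})\le M,\ \sum_i |c(x_i,y_i)|\le M\}$, we may restrict $\gamma$ to one of these pieces, still nonzero, so that all costs along $\gamma$ are bounded. In particular the local integrability assumption is not really needed beyond this truncation. Define
\begin{equation*}
\eta^- := \tfrac1n\textstyle\sum_{i=1}^n (\pr_{Z,i})^\sharp\gamma, \qquad \eta^+ := \tfrac1n\textstyle\sum_{i=1}^n (x_i,y_{i+1})^\sharp\gamma .
\end{equation*}
These are exactly the cyclical perturbations $\cC_\pi$ averaged against $\gamma$. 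Two properties follow. First, $\eta^-\le\pi$ because each marginal of $\gamma$ is at most $\pi$, so $\frac{\dd\eta^-}{\dd\pi}\le 1$. Second, $\eta^\pm$ share marginals, because the multiset $\{x_i\}$ and the multiset of second coordinates are both preserved by the cyclic shift. By Lemma~\ref{lem:car_r_pi}, $\eta:=\eta^+-\eta^-\in\cR_\pi$, and both $\C(\eta^\pm)$ are finite thanks to the truncation. Moreover
\begin{equation*}
\C(\eta)=\tfrac1n\int_{B_n}\Big(\textstyle\sum_i c(x_i,y_{i+1})-\sum_i c(x_i,y_i)\Big)\,\dd\gamma<0,
\end{equation*}
which contradicts the optimality of $\pi$ via Proposition~\ref{prop:zero-order-cond}.

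The main obstacle is the measure-theoretic dichotomy. A set that is null for every measure with marginals at most $\pi$ need not be obviously contained in a union of cylinders over null sets, and this passage from ``negligible for all such couplings'' to an explicit cover by coordinate-null sets is what requires Kellerer's duality. I would invoke that lemma as a black box. If the goal is a self-contained proof, I would try to derive it from the duality for the indicator cost $\mathds{1}_{B_n}$ on $Z^n$ with all marginals equal to $\pi$. The remaining steps (truncation, marginal bookkeeping, and measurability of $B_n$) are routine.
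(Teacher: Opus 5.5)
Your argument is correct, and it takes a genuinely different route from the paper. It is essentially the Beiglb\"ock--Goldstern--Maresch--Schachermayer proof recast in the radial-cone language.

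\textbf{Your route.} You argue globally and by contradiction. The Kellerer-type dichotomy produces a nonzero $\gamma$ on the bad cycles with all marginals below $\pi$. Averaging the cyclic shift against $\gamma$ then gives an element of $\cR_\pi$ of negative cost.

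\textbf{The paper's route.} The paper argues directly and pointwise. For fixed points $z_i=(x_i,y_i)$ it localises $\pi$ on shrinking sets $E_k(z_i)$. These are atoms of finitely generated $\sigma$-algebras, with L\'evy's upward theorem replacing Lebesgue differentiation. It then uses the explicit perturbations $\sum_i\pr_1^\sharp\sigma_i^k\otimes\pr_2^\sharp\sigma_{i+1}^k-\sum_i\sigma_i^k\in\cR_\pi$ and lets $k\to\infty$ in $\C\geq 0$.

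\textbf{What each buys.} The paper's route avoids Kellerer's duality, and it exhibits cyclical perturbations as limits of radial-cone ones, which is the paper's theme. But it has two costs.
\begin{itemize}
\item It needs integrability of $c$ with respect to $\mu\otimes\nu$, in order to differentiate $(x,y',x',y)\mapsto c(x,y)$ under $\pi\otimes\pi$.
\item The cross-term convergence $\C(\xi_i^k)\to c(x_i,y_{i+1})$ holds only for $\pi\otimes\pi$-a.e.\ pair $(z_i,z_{i+1})$. Monotonicity requires it for every pair drawn from $\Gamma$. A $\pi\otimes\pi$-full set need not contain $\Gamma\times\Gamma$ for any $\pi$-full $\Gamma$, and intersecting its projections does not remedy this.
\end{itemize}
Your dichotomy is precisely the tool that turns ``no bad-cycle measure with marginals below $\pi$'' into a single full-measure $\Gamma$. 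It also makes the $\mu\otimes\nu$-integrability hypothesis superfluous.

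\textbf{Two cosmetic fixes.}
\begin{itemize}
\item Since $c$ need not be bounded below, truncate with $\max_i|c(x_i,y_i)|\leq M$ and $\max_i|c(x_i,y_{i+1})|\leq M$. An upper bound on $\sum_i c(x_i,y_{i+1})$ alone does not make each term $\gamma$-integrable.
\item Your $\eta^\pm$ are not the Jordan parts of $\eta$, so Lemma~\ref{lem:car_r_pi} does not apply verbatim. Instead note directly that $\pi+t\eta\geq\pi-t\eta^-\geq(1-t)\pi\geq 0$ for $t\in[0,1]$.
\end{itemize}
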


We stress out that $c$ is allowed to assume $+\infty$ as value (and in fact we do not demand that $c$ be either non-negative nor bounded from below at this stage). In the following proposition, we first prove Theorem~\ref{thm:best-thm-necessary} in the special case where the marginal spaces are Euclidean, that is $X = \R^d$ and $Y = \R^m$ for $d, m \in \N$. This assumption will be removed afterwards.

\begin{prop}\label{prop:necessary-euclidean}
	Let $c : \R^d \times \R^m \to \extendedR$ be Borel-measurable so that $c$ is (locally) integrable with respect to $\mu \otimes \nu$. If $\pi \in \Pi(\mu, \nu)$ is an optimal transport plan such that $\C(\pi) < \infty$ then $\pi$ is $c$-cyclical monotone.
\end{prop}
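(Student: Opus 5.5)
The plan is to test the zeroth-order optimality condition of Proposition~\ref{prop:zero-order-cond} on \emph{smoothed} cyclical perturbations. These are genuine elements of the radial cone $\cR_\pi$, which by Lemma~\ref{lem:car_r_pi} can be checked explicitly. We then pass to the limit as the smoothing scale goes to zero to recover $\C(\eta)\geq 0$ for the atomic cyclical perturbations $\eta\in\cC_\pi$.

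Fix $n$ and points $z_i=(x_i,y_i)\in\supp(\pi)$, $i=1,\dots,n$, with $z_{n+1}:=z_1$. For $\eps>0$ set $B_i^\eps := B_\eps(x_i)\times B_\eps(y_i)$, so that $m_i^\eps:=\pi(B_i^\eps)>0$, and let $\pi_i^\eps := \pi|_{B_i^\eps}/m_i^\eps\in\cP(\R^d\times\R^m)$. Define
$$
\eta_\eps := \frac1n\sum_{i=1}^n\Big( \pr_1^\sharp\pi_i^\eps\otimes \pr_2^\sharp\pi_{i+1}^\eps - \pi_i^\eps\Big).
$$
Summing cyclically shows that $\eta_\eps$ has zero marginals. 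Its negative part is dominated by $\frac1n\sum_i\pi_i^\eps$, which is $\ll\pi$ with density bounded by $\max_i (m_i^\eps)^{-1}$. Lemma~\ref{lem:car_r_pi} then gives $\eta_\eps\in\cR_\pi$, and Proposition~\ref{prop:zero-order-cond} gives $\C(\eta_\eps)\ge 0$. Note that each positive term $\pr_1^\sharp\pi_i^\eps\otimes\pr_2^\sharp\pi_{i+1}^\eps\le (m_i^\eps m_{i+1}^\eps)^{-1}\mu\otimes\nu$, so the integrability of $c$ with respect to $\mu\otimes\nu$ makes $\C(\eta_\eps)$ well defined (we also use $\C(\pi)<\infty$ and, if needed, local integrability so that $c\in L^1(\pi)$ near the points).

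The key observation for passing to the limit concerns the cross terms. Consider the function $G((x,y),(x',y')) := c(x,y')$ on $(\R^d\times\R^m)^2$. Its law under $\pi\otimes\pi$ coincides with that of $c$ under $\mu\otimes\nu$, so $G\in L^1_{\rm loc}(\pi\otimes\pi)$. Moreover,
$$
\int c\,\dd\big(\pr_1^\sharp\pi_i^\eps\otimes\pr_2^\sharp\pi_{i+1}^\eps\big) = \frac{1}{\pi\otimes\pi(B_i^\eps\times B_{i+1}^\eps)}\int_{B_i^\eps\times B_{i+1}^\eps} G\,\dd(\pi\otimes\pi),
$$
which is exactly an average of $G$ over a product of balls. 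Since we are in Euclidean space, the Besicovitch--Lebesgue differentiation theorem applies to arbitrary Radon measures, here $\pi\otimes\pi$ and $\pi$. Hence for $\pi\otimes\pi$-a.e.\ pair $(z,z')$ these averages converge to $c(x,y')$, and for $\pi$-a.e.\ $z$ the averages $\int c\,\dd\pi_i^\eps$ converge to $c(x_i,y_i)$. (Balls in the product metric are products of balls for the sup-norm, so one can use that norm.) Letting $\eps\to0$ in $\C(\eta_\eps)\ge0$ yields
$$
\sum_i c(x_i,y_i)\le\sum_i c(x_i,y_{i+1})
$$
whenever every consecutive pair $(z_i,z_{i+1})$ lies outside a fixed $\pi\otimes\pi$-null set $N\subset(\R^d\times\R^m)^2$, and every $z_i$ lies outside a $\pi$-null set $N_0$.

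The main obstacle is to upgrade this ``almost every tuple'' statement to a single set $\Gamma$ with $\pi(\Gamma)=1$ on which \emph{all} cycles satisfy the inequality. A $\pi\otimes\pi$-null set of pairs need not be avoided by $\Gamma\times\Gamma$ for any full-measure $\Gamma$. My first attempt would be to strengthen the differentiation step so that the exceptional set depends on one point at a time. I would define $\Gamma$ as the set of $z=(x,y)$ such that $x$ is a Lebesgue point, in a one-sided sense, of $x\mapsto c(x,\cdot)$ viewed as an $L^1(\nu)$-valued map with respect to $\mu$, and symmetrically in $y$. This reduces the convergence of the cross-term averages to conditions on $z_i$ and $z_{i+1}$ separately, using that the first factor $\pr_1^\sharp\pi_i^\eps$ is dominated by $\mu|_{B_\eps(x_i)}/m_i^\eps$.

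If this fails, because the ratios $\mu(B_\eps(x_i))/m_i^\eps$ may blow up when $\pi$ is singular, I would fall back on a Kellerer-type covering argument. One shows that the bad set of $n$-tuples is null not only for $\pi^{\otimes n}$ but for every measure on $(\R^d\times\R^m)^n$ with all marginals equal to $\pi$, by rerunning the above construction with an arbitrary such coupling in place of $\pi^{\otimes n}$. Kellerer's duality then covers it by cylinders $\bigcup_j \pr_j^{-1}(M_j)$ with $\pi(M_j)=0$. Setting $\Gamma:=\supp(\pi)\setminus\big(N_0\cup\bigcup_{n,j}M_j^{(n)}\big)$ then gives a $c$-cyclically monotone set of full $\pi$-measure.
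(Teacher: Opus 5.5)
Your first two paragraphs coincide with the paper's proof: same perturbation, and Lebesgue differentiation for $\pi$ and for $\pi\otimes\pi$ applied to $(x,y,x',y')\mapsto c(x,y')$. The obstacle you isolate is real, and the paper does not resolve it. The paper sets $\Gamma:=\Gamma_0\cap\pr_1(\Sigma)\cap\pr_2(\Sigma)$, with $\Sigma$ the Lebesgue set for $\pi\otimes\pi$. But $z_i\in\pr_1(\Sigma)$ and $z_{i+1}\in\pr_2(\Sigma)$ do not give $(z_i,z_{i+1})\in\Sigma$.

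Here is a concrete instance. Let $\mu=\nu$ be Lebesgue measure on $[0,1]$, and let $c(x,y)=|x-y|$ except $c(a,b)=c(b,a)=-10$ for fixed $a\neq b$.
\begin{itemize}
\item No coupling of atomless marginals charges these two points, so $\pi=(\mathrm{id},\mathrm{id})^\sharp\mu$ is still optimal.
\item Both $(a,a)$ and $(b,b)$ lie in $\Gamma_0\cap\pr_1(\Sigma)\cap\pr_2(\Sigma)$.
\item Yet $c(a,b)+c(b,a)<c(a,a)+c(b,b)$.
\end{itemize}
The same example rules out your first attempt, independently of any estimate. Conditions on single points that only involve the $L^1(\nu)$-class of $c(x,\cdot)$ and the $L^1(\mu)$-class of $c(\cdot,y)$ do not see this modification. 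So they accept $(a,a)$ and $(b,b)$, exactly as they would for the continuous cost $|x-y|$. The cycle inequality involves values of $c$ at off-support points, which no differentiation argument detects. The bad configurations must therefore be excluded jointly on tuples, so drop the first attempt.

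Your fallback is the right route; it is the argument of \cite{beiglbockOptimalBetterTransport2009}. However, run it directly rather than by ``rerunning the above construction''.
\begin{itemize}
\item Let $B_n$ be the Borel set of $n$-tuples with $\sum_i c(x_i,y_{i+1})<\sum_i c(x_i,y_i)$. Suppose some measure $\kappa$ on $(\R^d\times\R^m)^n$ with all marginals $\le\pi$ charges $B_n$.
\item Since $c(x_i,y_i)<\infty$ for $\kappa$-a.e.\ tuple, all values $c(x_i,y_i)$ and $c(x_i,y_{i+1})$ are finite $\kappa$-a.e.\ on $B_n$. Hence, for some $K$, the restriction $\kappa'$ of $\kappa$ to $B_n\cap\{\text{all these values lie in }[-K,K]\}$ is nonzero.
\item Set $\eta:=\frac1n\sum_i\big[(x_i,y_{i+1})^\sharp\kappa'-(x_i,y_i)^\sharp\kappa'\big]$. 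It has zero marginals and $\eta^-\le\pi$, so $\eta\in\cR_\pi$ by Lemma~\ref{lem:car_r_pi}.
\item But $\C(\eta)=\frac1n\int\big[\sum_i c(x_i,y_{i+1})-\sum_i c(x_i,y_i)\big]\,\dd\kappa'<0$, contradicting Proposition~\ref{prop:zero-order-cond}.
\end{itemize}
The truncation matters. Under $\kappa$, the law of $(x_i,y_{i+1})$ is an arbitrary element of $\Pi(\mu,\nu)$, not $\mu\otimes\nu$. So your hypothesis gives no integrability there, and Lebesgue differentiation with respect to $\kappa$ would not be justified.

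Kellerer's lemma then covers $B_n$ by $\bigcup_j\pr_j^{-1}(M^{(n)}_j)$ with $\pi(M^{(n)}_j)=0$. The set $\Gamma:=\supp(\pi)\setminus\bigcup_{n,j}M^{(n)}_j$ has full $\pi$-measure and is $c$-cyclically monotone.

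Compared with the paper's route, this needs no Euclidean structure, no Lebesgue differentiation, and no integrability of $c$ with respect to $\mu\otimes\nu$, so the smoothing in your first paragraphs becomes unnecessary. The price is Kellerer's duality theorem, which supplies exactly the joint exclusion of points that a pointwise differentiation argument cannot.
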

\begin{proof}[Proof of Proposition~\ref{prop:necessary-euclidean}]
	Let us consider a (finite) sequence of points $(x_1, y_1)$, ...,$(x_n, y_n) \in \Gamma$ for a set $\Gamma \subset \R^d \times \R^m$ such that $\pi(\Gamma) = 1$.  \textit{The precise definition of the set $\Gamma$ is to be given at the end of the proof}. Here, recall that we let $y_{n+1} := y_1$. Let $\eps > 0$ and let $B_\eps^i \subset \R^d \times \R^m$ be the ball of radius $\eps$ centered at $(x_i, y_i)$. Again, let $B_\eps^{n+1} := B_\eps^1$. Up to refining the set $\Gamma$, we may (and will) assume without loss of generality that $\pi(B_\eps^i) > 0$ for all $\eps > 0$ and all $i =1, \dots, n$. Let us then define $\sigma_i^\eps$ to be the restriction of $\pi$ to the ball $B_\eps^i$ normalised so as to make it a probability measure, that is $\sigma_i^\eps = \pi(B_\eps^i)^{-1} \pi \restriction_{B_\eps^i}$. We then let $$\sigma_\eps := \sum_{ i = 1}^n \sigma_i^\eps.$$  We note that $\sigma_\eps \ll \pi$ and that the Radon-Nykodym derivative of $\sigma_\eps$ with respect to $\pi$ is uniformly bounded since $\pi(B_\eps^i) > 0$ for all $i = 1, \dots, n$. More precisely, we have $\Vert \frac{d \sigma_\eps}{d \pi} \Vert_{L^\infty(\pi)}^{-1} = \min_{i = 1, \dots, n} \pi(B_\eps^i)$. Now, we introduce $\xi_i^\eps := \pr_1^\sharp \sigma_i^\eps \otimes \pr_2^\sharp \sigma_{i+1}^\eps$ for all $i = 1, \dots, n$, where again we let $\sigma_{n+1}^\eps := \sigma_1^\eps$ and let us define $$\xi_\eps := \sum_{i =1}^n \xi_i^\eps.$$ By construction $\sigma_\eps$ and $\xi_\eps$ share common marginals. Therefore $\eta_\eps := \xi_\eps - \sigma_\eps \in \cR_\pi$ is an admissible perturbation at $\pi$  --- since $\pi + t(\xi_\eps - \sigma_\eps) \in \Pi(\mu, \nu)$ for all $0 \leq t < \Vert \frac{d \sigma_\eps}{d \pi} \Vert^{-1}_{L^\infty(\pi)}$  --- and by optimality (and finiteness) of $\pi$ it must be that $\C(\xi_\eps) \geq \C(\sigma_\eps)$. Let us show that $\C(\sigma_\eps) \to \sum_{i = 1}^n c(x_i, y_i)$ and $\C(\xi_\eps) \to \sum_{i = 1}^n c(x_i, y_{i+1})$ as $\eps \to 0$, so that the thesis of Proposition~\ref{prop:necessary-euclidean} will follow. 
	
	We now recall that by virtue of \textit{Lebesgue differentiation theorem} for $\pi$-a.s. all point $(x_0, y_0) \in \R^d \times \R^m$ one has
	\begin{equation}\label{eq:ldt}
		\frac{1}{\pi(B_\eps)}\int_{B_\eps} c(x, y) \, \pi(\dd x, \dd y) \cto{\eps \to 0} c(x_0, y_0)
	\end{equation}
	where $B_\eps$ is the ball of radius $\eps$ centred at $(x_0, y_0)$. This theorem is valid for any Radon measure on Euclidean spaces, see \textit{e.g.} \cite[Theorem 8.4.6]{benedettoIntegrationModernAnalysis2009}. This is veracious since $\pi$ has finite transport cost, meaning that $c \in L^1(\pi)$. Let then $\Gamma_0 \subset \R^d \times \R^m$ be the set of points $(x_0, y_0) \in \R^d \times \R^m$ such that \eqref{eq:ldt} holds — which has therefore full $\pi$-measure. Then — provided $(x_i, y_i) \in \Gamma_0$ for all $i = 1, \dots,n$ — we have
	\begin{multline*}
	    	\C(\sigma_i^\eps) = \int_{\R^d \times \R^m} c(x, y) \, \sigma_i^\eps(\dd x, \dd y) \\= \frac1{\pi(B_\eps^i)}\int_{B_\eps^i} c(x, y) \, \pi(\dd x, \dd y) \cto{\eps \to 0} c(x_i, y_i).
	\end{multline*}
    We have therefore obtain the sought-for claim that $$\C(\sigma_\eps) = \sum_{i = 1}^n \C(\sigma_i^\eps) \to \sum_{i =1}^n c(x_i, y_i)$$ as $\eps \to 0$. It remains to take care of the other limit. 
    
    We have
	\begin{align*}
			\C(\xi_i^\eps) &= \int_{\R^d \times \R^m} c(x, y) \,  \pr_1^\sharp \sigma_i^\eps \otimes \pr_2^\sharp \sigma_{i+1}^\eps(\dd x, \dd y) \\
			%&= \int_{\R^d \times \R^m} c(x, y) \,d  \left(\int_{\R^m} \sigma_i^\eps(x, d y')\right) \otimes \left(\int_{\R^d} \pi_{i+1}^\eps(\dd x', y)\right) \\
			&=  \int_{\R^d \times \R^m} c(x, y) \,d  \left(\frac{1}{\pi(B_\eps^i)}\int_{\R^m} \pi_{|B_{i}^\eps}(x, d y')\right) \otimes \left(\frac{1}{\pi(B_\eps^{i+1})}\int_{\R^d} \pi_{|B_\eps^{i+1}}(\dd x', y)\right) \\
			%&= \frac{1}{\pi(B_\eps^i) \times \pi(B_\eps^{i+1})} \int_{(\R^d \times \R^m) \times (\R^d \times \R^m)} c(x, y) \, d \pi_{|B_\eps^i}\otimes \pi _{|B_\eps^{i+1}}(x, y', x', y)\\
			&=\frac{1}{(\pi \otimes \pi)(B_\eps^i \times B_\eps^{i+1})} \int_{B_\eps^i \times B_\eps^{i+1}} c(x, y) \,  \pi \otimes \pi(\dd x, \dd y', \dd x', \dd y).
	\end{align*} 
We can then appeal again to Lebesgue differentiation theorem, applied this time to the probability measure $\pi \otimes \pi \in \cP((\R^d \times \R^m) \times (\R^d \times \R^m))$ and for the integrand $c$  seen as the function $c(x, y', x', y) := c(x, y)$. Note that this is righteous since $c$ is assumed to be (locally) integrable with respect to $\mu \otimes \nu$. For all $i = 1, \dots,n$, we have $(x_i, y_{i+1}) \in B_\eps^i \times B_\eps^{i+1}$ and letting $\eps \to 0$ yields that $\C(\xi_i^\eps) \to c(x_i, y_{i+1})$, and therefore the claim that $\C(\xi_\eps) = \sum_{i = 1}^n \C(\xi_i^\eps) \to \sum_{i = 1}^n c(x_i, y_{i+1})$. This terminates (in essence) the proof of Proposition~\ref{prop:necessary-euclidean}. 

To finish, let us discuss how to construct the set $\Gamma$ introduced at the beginning of this proof. Let $\Gamma_1 := \bigcap_{i = 1, 2}\pr_i(\Sigma)$ where $\Sigma \subset (\R^d \times \R^m) \times (\R^d \times \R^m)$ is the set of points for which Lebesgue differentiation theorem holds this time for $\pi \otimes \pi$ and where evidently we define $\pr_1(x, y, z, t) = (x, y)$ and $\pr_2(x, y, z, t) = (z, t)$ for all $(x, y, z, t) \in (\R^d \times \R^m) \times (\R^d \times \R^m)$. It is well-known that the sets $\pr_1(\Sigma)$ and $\pr_2(\Sigma)$ need not be Borel-measurable, but that they are certainly Lebesgue-measurable \cite[Theorem 2.12]{crauelRandomProbabilityMeasures2002}. It then suffices to consider $\Gamma := \Gamma_0 \cap \Gamma_1$ where $\Gamma_0$ was introduced above as the (Borel-measurable) set of points for which Lebesgue differentiation theorem holds  \eqref{eq:ldt} in the case of $\pi$. Note that $\Gamma$ is \textit{a priori} only Lebesgue-measurable. Nonetheless, as such there exists a Borel set $\Gamma' \subset \R^d \times \R^m$ and a $\pi$-null set $N \subset \R^d \times \R^m$ so that $\Gamma = \Gamma' \cup N$. Then $\Gamma'$ is the sought-for $c$-cyclically monotone Borel-measurable set over which $\pi$ concentrates.
	\end{proof}

	In the preceding proposition, we have assumed that the marginal spaces where Euclidean so as to appeal to the standard Lebesgue differentiation theorem, see \cite[Theorem 8.4.6]{benedettoIntegrationModernAnalysis2009}.  In the \textit{non}-Euclidean setting, this theorem is still much valid but requires additional assumptions either on the underlying space or on the probability measure under consideration. Typically, one demands that this measure be \textit{doubling}, see \textit{e.g.} \cite[Chapter 2.9]{federerGeometricMeasureTheory1996}. But this is completely inadequate, since there are no reasons for a (optimal) transport plan to be doubling — and in fact it is easy to build counterexamples. The gap between the Euclidean and non-Euclidean setting in the Lebesgue differentiation theorem boils down to the choice of the \textit{differentiation basis} — \textit{i.e.} whether or not we can consider ``nice" shrinking sets like balls or cubes. But for what matters to us, this is rather unimportant. More precisely, we can attach to each point a sequence of measurable sets specifically tailored to this point so that the Lebesgue differentiation theorem holds:

\begin{lem}\label{lem:ldt-gen}
	Let $\gamma \in \cP(Z)$ where $Z$ is a Polish space. Then, for $\gamma$-a.s all $z \in Z$, there exists a sequence $E_k(z)$ of Borel sets all containing $z$ such that $\gamma(E_k(z)) > 0$ for all $k \in \N$ and such that for any function $\phi : Z \to \R$ integrable with respect to $\gamma$, we have 
	\begin{equation}\label{eq:ldt-non-euclidean}
		\frac{1}{\gamma(E_k(z))} \int_{E_k(z)} \phi(w)  \gamma(\dd w) \cto{k \to \infty} \phi(z)
	\end{equation}
    for $\gamma$ almost-surely all $z \in Z$.
\end{lem}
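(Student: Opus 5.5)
The idea is to replace balls by the cells of a refining sequence of finite Borel partitions. With that choice, the averages in \eqref{eq:ldt-non-euclidean} become conditional expectations, and the statement reduces to Lévy's upward martingale convergence theorem. No geometric (covering or doubling) property of $\gamma$ is then needed, only the separability of $Z$.

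\textbf{Construction of the sets.} Since $Z$ is Polish, it is second countable; fix a countable base of open sets $(U_j)_{j \in \N}$. For each $k \in \N$ let $\mathcal{A}_k$ be the finite algebra generated by $U_1, \dots, U_k$, and let $\mathcal{P}_k$ be the finite partition of $Z$ into its atoms. Each atom is of the form $\bigcap_{j \le k} U_j^{\pm}$, where $U_j^{+} := U_j$ and $U_j^{-} := Z \setminus U_j$. The partitions are nested: $\mathcal{P}_{k+1}$ refines $\mathcal{P}_k$. For $z \in Z$ let $E_k(z)$ be the unique cell of $\mathcal{P}_k$ containing $z$. It is Borel and contains $z$ by construction. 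Let $N$ be the union of all cells of all $\mathcal{P}_k$ that have zero $\gamma$-measure. This is a countable union of null sets, so $\gamma(N) = 0$. Every $z \notin N$ then satisfies $\gamma(E_k(z)) > 0$ for all $k$, which gives the positivity requirement for $\gamma$-a.s.\ all $z$. The sequence $E_k(z)$ depends only on $z$ and not on $\phi$, as the statement demands.

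\textbf{Identification with a martingale.} Let $\cF_k := \sigma(\mathcal{P}_k) = \mathcal{A}_k$; this is an increasing filtration on the probability space $(Z, \mathcal{B}(Z), \gamma)$. For $\phi \in L^1(\gamma)$, the conditional expectation with respect to a $\sigma$-algebra generated by a finite partition is explicit: for $z \notin N$,
\[
\E_\gamma[\phi \mid \cF_k](z) = \frac{1}{\gamma(E_k(z))} \int_{E_k(z)} \phi(w)\, \gamma(\dd w).
\]
So the left-hand side of \eqref{eq:ldt-non-euclidean} is a version of the closed martingale $\E_\gamma[\phi \mid \cF_k]$. By Lévy's upward theorem it converges $\gamma$-a.s.\ (and in $L^1$) to $\E_\gamma[\phi \mid \cF_\infty]$, where $\cF_\infty := \sigma\big(\bigcup_k \cF_k\big)$.

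\textbf{The key point.} The remaining step is to check that $\cF_\infty = \mathcal{B}(Z)$, so that $\E_\gamma[\phi \mid \cF_\infty] = \phi$ almost surely. This holds because $\cF_\infty$ contains every $U_j$, and every open set of $Z$ is a countable union of the $U_j$ by second countability. Hence $\cF_\infty$ contains all open sets and therefore all Borel sets.

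One must keep the order of quantifiers straight: the exceptional null set in \eqref{eq:ldt-non-euclidean} may depend on $\phi$, which is exactly what the statement allows ("for $\gamma$ almost-surely all $z$" after fixing $\phi$). If a version uniform over a countable family of integrands is needed later, for instance $\phi = c$ together with its relevant variants, one simply takes the countable union of the exceptional sets.
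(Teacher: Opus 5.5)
Your proposal is correct and follows essentially the same route as the paper. Both take $E_k(z)$ to be the atom of $\sigma(U_1,\dots,U_k)$ containing $z$, for a countable base $(U_j)_j$, discard the countable union of $\gamma$-null atoms, identify the cell averages with $\E[\phi \mid \cF_k]$, and conclude by L\'evy's upward theorem. You also spell out why $\sigma\bigl(\bigcup_k \cF_k\bigr) = \mathcal{B}(Z)$ and that the exceptional null set may depend on $\phi$, both of which the paper leaves implicit.
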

The proof of the technical Lemma~\ref{lem:ldt-gen} is postponed to Appendix~\ref{app:ldt}. We can prove Theorem~\ref{thm:best-thm-necessary} in full generality using this lemma:

\begin{proof}[Proof of Theorem~\ref{thm:best-thm-necessary}]
	The proof is identical to that of Proposition~\ref{prop:necessary-euclidean}, but this time we need to care for which kind of neighbourhood we are restricting $\pi$ onto. Let $(x_1, y_1), \dots, (x_n, y_n) \in \Gamma$, where $\Gamma$ is a set such that $\pi(\Gamma) = 1$ that will be constructed again \textit{a posteriori}. For each $z \in X \times Y$, let $E_k(z)$ be the Borel sets of Lemma~\ref{lem:ldt-gen}. Let us then write $E_{k}^i = E_{k}(z_i)$ where $z_i := (x_i, y_i)$. Let then $\pi_{i}^k$ be the normalised restriction of $\pi$ to $E_{k}^i$ and $\sigma_k := \sum_{i = 1}^n \sigma_i^k$. Then, we let $\xi_i^k := \pr_1^\sharp \sigma_i^k\otimes \pr_2^\sharp \sigma_{i+1}^{k}$ for all $i = 1, \dots, n$ (where $\sigma_{n+1}^k := \sigma_{1}^k$) and let $\xi_k := \sum_{i = 1}^n \xi_i^k$. In virtue of Lemma~\ref{lem:ldt-gen}, just as in the proof Proposition~\ref{prop:necessary-euclidean}, we have $\C(\sigma_k) \to \sum_{i =1}^n c(x_i, y_i)$ as $k \to \infty$. Also, again using Fubini's theorem in the same spirit as in the proof of Proposition~\ref{prop:necessary-euclidean}, write
\begin{equation}
	\C(\xi_i^k) = \frac{1}{\pi \otimes \pi(E_k^i \times E_{k+1}^i)} \int_{E_k^i \times E_{k+1}^i} c(x, y) \, \pi \otimes \pi(\dd x, \dd y', \dd x', \dd y)
\end{equation}
Now, the thesis of Lemma~\ref{lem:ldt-gen} still holds here — we refer to Remark~\ref{rem:ldt-gen-tensor-product} in the Appendix. In particular, we have $\C(\xi_i^k) \to c(x_i, y_{i+1})$ for all $i = 1, \dots, n$ and therefore $\C(\xi_k) \to \sum_{i =1}^n c(x_i, y_{i+1})$. This finishes the (core of the) proof. The set $\Gamma$ is built exactly as in the proof of Proposition~\ref{prop:necessary-euclidean}, so we do not repeat the argument here.
\end{proof}

\begin{figure}
	\includegraphics[width = 0.7\textwidth]{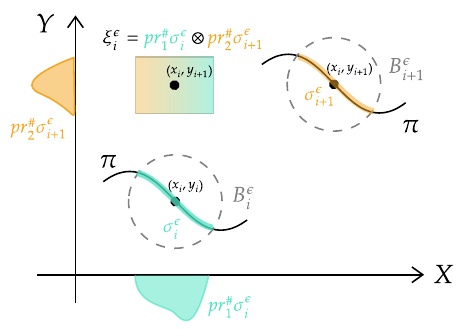}\label{fig:preuve1}
    \caption{Schematic explanation of the construction of the measures $\sigma_i^\eps$ and $\xi_i^\eps$ ($i=1, \dots,n$) in the proof of Proposition~\ref{prop:necessary-euclidean}.}
\end{figure}

\subsection{\textit{$c$-cyclical monotonicity implies optimality} ($\Leftarrow$)}\label{sec:suf-ot}
We will now discuss the reverse implication, namely that $c$-cyclical monotonicity is a sufficient criterion to yield optimality under suitable assumptions on the cost $c$. The idea is as follows: provided the $c$-cyclical monotonicity property holds, that is $\C(\eta) \geq 0$ for all $\eta \in \cC_\pi$ --- or equivalently for all $\eta \in \cF_\pi$ --- the game is to show that this implies that $\C(\eta) \geq 0$ for all $\eta \in \cR_\pi$, thus implying that $\pi$ is optimal according Proposition~\ref{prop:zero-order-cond}. In fact, we will first prove the following statement:
\begin{prop}\label{prop:density}
	For all $\pi \in \Pi(\mu, \nu)$, any element of the radial cone $\cR_\pi$ is an accumulation point of the set of finite perturbations $\cF_\pi$.
\end{prop}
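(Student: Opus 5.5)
Given $\eta = \eta^+ - \eta^- \in \cR_\pi$, Lemma~\ref{lem:car_r_pi} gives $\eta^- = f\pi$ with $0 \le f \in L^\infty(\pi)$, and $\eta^+,\eta^-$ have equal marginals $m_1 := \pr_1^\sharp \eta^\pm$, $m_2 := \pr_2^\sharp \eta^\pm$. The task is to build finite perturbations $\eta_k \in \cF_\pi$ with $\eta_k \to \eta$ vaguely. My plan is to approximate $\eta^-$ and $\eta^+$ separately by finitely supported positive measures and then repair the marginals so that they agree exactly.

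\emph{Step 1: approximate $\eta^-$ by atoms on $\supp(\pi)$.} Since $\eta^- \ll \pi$, the measure $\eta^-$ is concentrated on $\supp(\pi)$. By the standard empirical-measure argument in Polish spaces, there exist finitely supported positive measures $\alpha_k$ with atoms in $\supp(\pi)$, total mass $\eta^-(X\times Y)$, and $\alpha_k \to \eta^-$ narrowly. I would take $\alpha_k = \sum_j \eta^-(A_j)\,\delta_{z_j}$ over a fine partition $(A_j)$ of a large compact set, with $z_j \in A_j \cap \supp(\pi)$.

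\emph{Step 2: match the marginals.} The difficulty is that a naive discretisation of $\eta^+$ will not have the same marginals as $\alpha_k$. The cleanest remedy is to discretise both $\eta^+$ and $\eta^-$ through a \emph{common} product partition. Choose partitions $(P_a)$ of $X$ and $(Q_b)$ of $Y$ into small Borel cells (modulo a small-mass remainder handled by one extra cell), and fix representative points $x_a \in P_a$ and $y_b \in Q_b$. Then the discretised measures
\[
\tilde\eta^\pm_k := \sum_{a,b} \eta^\pm(P_a\times Q_b)\,\delta_{(x_a,y_b)}
\]
have first marginal $\sum_a m_1(P_a)\delta_{x_a}$ and second marginal $\sum_b m_2(Q_b)\delta_{y_b}$. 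These coincide for $+$ and $-$, so $\tilde\eta^+_k - \tilde\eta^-_k$ has zero marginals and converges to $\eta$.

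The remaining issue is the support constraint $\supp(\tilde\eta^-_k) \subset \supp(\pi)$: the grid point $(x_a,y_b)$ need not lie in $\supp(\pi)$ even when $\eta^-(P_a\times Q_b) > 0$. I expect this to be the main obstacle. To handle it, for each cell with $\eta^-(P_a\times Q_b) > 0$ I would pick instead a point $z_{ab} = (x_{ab}, y_{ab}) \in (P_a\times Q_b)\cap\supp(\pi)$, which is nonempty because $\eta^- \ll \pi$. I would then place the atom of $\tilde\eta^-_k$ at $z_{ab}$. This spoils exact marginal equality, but only within cells: the discrepancy is a transfer of mass between points of the same $P_a$, and likewise within the same $Q_b$.

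\emph{Step 3: correct the discrepancy by a cycle.} I would fix the error by modifying the positive part: put atoms of $\tilde\eta^+_k$ at points $(x_{ab'}, y_{a'b})$ built from the chosen representatives. A cleaner alternative is to define $\tilde\eta^+_k$ as a product-type recombination $\sum_{a,b,a',b'} w\,\delta_{(x_{a b'},\,y_{a' b})}$ with weights chosen so that its marginals are exactly those of $\tilde\eta^-_k$. Since all points involved lie within the cell diameters of the grid, $\tilde\eta^+_k$ still converges to $\eta^+$ as the mesh goes to zero. Any bookkeeping of the weights is a finite transportation-polytope problem on the grid, which is always solvable because the required marginals agree on cells. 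Letting the mesh size tend to $0$ and the compact sets exhaust the space (by tightness), $\eta_k := \tilde\eta^+_k - \tilde\eta^-_k \in \cF_\pi$ converges to $\eta$ vaguely, with $\|\eta_k\|_{TV}$ bounded.
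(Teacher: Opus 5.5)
Your argument is correct, and it takes a genuinely different route from the paper's.

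\textbf{Your route.} You discretise deterministically over a common product partition $(P_a\times Q_b)$. The negative part gets atoms $z_{ab}=(x_{ab},y_{ab})\in(P_a\times Q_b)\cap\supp(\pi)$ with weight $w^-_{ab}:=\eta^-(P_a\times Q_b)$. The positive part is recombined on the points $(x_{ab'},y_{a'b})$. The step you call ``bookkeeping'' should be made explicit, and it can be. Set $w^+_{ab}:=\eta^+(P_a\times Q_b)$ and give the atom $(x_{ab'},y_{a'b})$ the weight $w^+_{ab}\,\frac{w^-_{ab'}}{m_1(P_a)}\,\frac{w^-_{a'b}}{m_2(Q_b)}$. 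A two-line check shows this reproduces exactly the marginals of $\tilde\eta^-_k$ and puts mass $w^+_{ab}$ in each cell. The check uses only $\sum_b w^\pm_{ab}=m_1(P_a)$ and $\sum_a w^\pm_{ab}=m_2(Q_b)$; terms with a vanishing denominator vanish anyway.

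\textbf{The paper's route.} It runs a Markov chain alternating between the disintegrations $\eta^+_x$ and $\eta^-_y$. Consecutive atoms $(x_k,y_k)$, $(x_k,y_{k+1})$ then form a near-cycle whose first marginals agree automatically. The second-marginal mismatch is repaired with a few extra atoms through a point $(\bar x,\bar y)\in\supp(\eta^-)$. It then pools $M$ independent chains to get around non-ergodicity and invokes Varadarajan's theorem as $M\to\infty$. It concludes with a diagonal extraction, which is where it needs metrisability of bounded sets in the vague topology.

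\textbf{What each buys.} Your argument is elementary and uses no probability. It needs no diagonal argument and even gives narrow, not merely vague, convergence. The paper's construction buys something else: its atoms are exact samples of $\eta^\pm$, and its perturbations are corrected cycles $\frac1n\sum_k(\delta_{(x_k,y_{k+1})}-\delta_{(x_k,y_k)})$. That is exactly what lets the same construction be recycled with Birkhoff's ergodic theorem in Theorem~\ref{thm:sufficient-ot}, where $c$ is only measurable. Your representatives are arbitrary points of the cells, so $\int c\,\dd\tilde\eta^\pm_k$ need not approach $\int c\,\dd\eta^\pm$ when $c$ is discontinuous. Your density result therefore proves the proposition as stated but only feeds the continuous case, Proposition~\ref{prop:sufficiency-cont}.
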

Under the assumption that $\C$ is continuous -- \textit{i.e.} when $c \in C_0(X \times Y)$ -- then it follows immediately from the above proposition that $c$-cyclical monotonicity implies optimality:
\begin{prop}\label{prop:sufficiency-cont}
	Let $c : X \times Y \to \R$ be in $C_0(X \times Y)$, \textit{i.e.} continuous and vanishing at infinity (or yet that $c$ is continuous and the marginals $\mu$ and $\nu$ are compactly-supported). If $\pi \in \Pi(\mu, \nu)$ is $c$-cyclically monotone and $\C(\pi)<\infty$, then $\pi$ is optimal.
\end{prop}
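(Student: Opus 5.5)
The plan is to reduce the claim to Proposition~\ref{prop:zero-order-cond} via Proposition~\ref{prop:density}, using continuity of $\C$ for the vague topology.

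\textbf{Step 1: $\C$ is non-negative on $\cF_\pi$.} Since $\pi$ is $c$-cyclically monotone, $\C(\eta)\ge 0$ for every $\eta\in\cC_\pi$. By the purely algebraic equivalence recalled in Remark~\ref{rem:equiv-fin-opt-c-cm}, this gives $\C(\eta)\ge 0$ for every $\eta\in\cF_\pi$. Here $c$ is real-valued, so there are no finiteness issues.

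One subtlety must be addressed here. Definition~\ref{def:c-cM} only requires $\pi$ to be concentrated on some $c$-cyclically monotone set $\Gamma$, whereas $\cC_\pi$ and $\cF_\pi$ involve points of $\supp(\pi)$. I would handle this using continuity of $c$. The closure $\overline{\Gamma}$ is still $c$-cyclically monotone, since inequality \eqref{eq:c-cM} passes to limits of finitely many points. Moreover $\supp(\pi)\subset\overline\Gamma$ because $\pi(\Gamma)=1$. So the monotonicity inequality holds for all points of $\supp(\pi)$, which is exactly the statement $\C\ge 0$ on $\cC_\pi$.

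\textbf{Step 2: pass from $\cF_\pi$ to $\cR_\pi$.} Fix $\eta\in\cR_\pi$. By Proposition~\ref{prop:density}, $\eta$ is an accumulation point of $\cF_\pi$ in the vague topology. To turn this into a sequence, I would want the approximants to lie in a bounded TV-ball, where the vague topology is metrisable because $X\times Y$ is Polish and locally compact. I expect the construction behind Proposition~\ref{prop:density} to give finite perturbations $\eta_k$ with $\|\eta_k\|_{TV}\le \|\eta\|_{TV}$: one discretises $\eta^-$ and $\eta^+$ with matching marginals while keeping total mass. Failing that, I would use a net directly, since $\C$ only needs to be continuous along the convergence used.

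Since $c\in C_0(X\times Y)$, the functional $\C(\cdot)=\int c\,\dd(\cdot)$ is by definition continuous for the vague topology. Hence $\C(\eta)=\lim_k \C(\eta_k)\ge 0$.

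\textbf{Step 3: conclude.} $\C(\pi)$ is finite; indeed it is bounded by $\|c\|_\infty$. So Proposition~\ref{prop:zero-order-cond} applies and yields optimality of $\pi$.

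In the variant where $c$ is merely continuous and $\mu,\nu$ are compactly supported, every measure involved lives on the compact set $\supp\mu\times\supp\nu$. On that set $c$ coincides with a $C_0$ function via a cutoff, and the same argument applies.

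The main obstacle is Step 2. One needs the accumulation in Proposition~\ref{prop:density} to be in a topology for which $\C$ is continuous, and one must make sure the approximating finite perturbations have $\eta_k^-$ supported in $\supp(\pi)$ rather than merely in $\Gamma$. The closure argument of Step 1 is designed precisely to absorb this second point.
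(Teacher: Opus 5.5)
Your proposal is correct and follows essentially the same route as the paper's proof: $\C\geq 0$ on finite perturbations, approximation of every $\eta\in\cR_\pi$ by elements of $\cF_\pi$ via Proposition~\ref{prop:density}, vague continuity of $\C$ for $c\in C_0(X\times Y)$, and finally Proposition~\ref{prop:zero-order-cond}. The paper leaves implicit both the passage from the concentration set $\Gamma$ to $\overline{\Gamma}\supset\supp(\pi)$ and the cutoff argument for the compactly supported variant; your closure argument (which uses continuity of $c$) and your cutoff handle them correctly.
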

\begin{proof}[Proof of Proposition~\ref{prop:sufficiency-cont}]
	We need that show that $\C(\eta) \geq 0$ for all $\eta \in \cR_\pi$. But for any admissible perturbation $\eta \in \cR_\pi$, according to Proposition~\ref{prop:density}, we can find $(\chi_n)_n \subset \cF_\pi$ such that $\chi_n \wto \eta$ in the limit $n \to \infty$. By $c$-cyclical monotonicity, we have $\C(\chi_n) \geq 0$ for all $n \in \N$. By the assumptions on the cost of transport $c$, the transport cost functional $\C : \cP(X \times Y) \to \R$ is vaguely continuous, which entails that $\C(\eta) = \lim_{n \to \infty} \C(\chi_n) \geq 0$ so that the thesis is proved.
\end{proof}
It is interesting to remark that the result of Proposition~\ref{prop:density} is in a sense completely disconnected from the notion of $c$-cyclical monotonicity \textit{per se}, since it has nothing to do with the cost of transport $c$. The $c-$cyclical monotonicity is certainly implied by it under continuity assumption on the cost of transport as shown above in Proposition~\ref{prop:sufficiency-cont}. We will show afterward how to obtain sufficiency of the $c$-cyclical monotonicity property for optimality without the stringent continuity assumptions on the cost of transport of Proposition~\ref{prop:sufficiency-cont}. First, we prove Proposition~\ref{prop:density} by constructing explicitly for each $\eta \in \cR_\pi$ a sequence of finite perturbations vaguely converging to $\eta$.

\subsubsection{Proof of Proposition~\ref{prop:density}} We fix an admissible perturbation $\eta \in \cR_\pi$. We remark that we may (and will) assume without loss of generality that $\eta^+$ and $\eta^-$ are normalised so that they are probability measures, that is $\int_{X\times Y} \eta^+ = \int_{X\times Y} \eta^- = 1$. By definition $\eta^+$ and $\eta^-$ share common marginals, that is $\pr_i^\sharp \eta^+ =\pr_i^\sharp \eta^- := m_i$ for $i = 1, 2$. We can then write $\eta^- = \eta^-_{y}(x) \otimes m_2(y)$ the disintegration of $\eta^-$ onto its second marginal $m_2$, and likewise $\eta^+ = m_1(x) \otimes \eta^+_x(y)$ the disintegration of $\eta^+$ onto its first marginal $m_1$. We are going to build samples $Z_0^-, \dots, Z_{n-1}^-$ — resp. $Z_0^+, \dots, Z_{n-1}^+$ — from the probability measure $\eta^-$ — resp. $\eta^+$ — such that \textit{\textbf{(i)}} the associated empirical measures $$\eta^-_n := \frac1n\sum_{i=0}^{n-1} \delta_{Z_i^-} \qquad \text{and} \qquad \eta^+_n := \frac1n\sum_{i=0}^{n-1} \delta_{Z_i^+}$$ vaguely converge to respectively $\eta^-$ and $\eta^+$ and \textit{\textbf{(ii)}} such that these empirical measures share common marginals. Then, $\eta_n := \eta^+_n - \eta^-_n$ will be a rightful finite perturbation at $\pi$, \textit{i.e.} an element of $\cF_\pi$, and $\eta_n \wto \eta$.

Let us now explain how to build such samples. First, assume that $Z_0^- := (x_0, y_0)$ is drawn according to the probability measure $\eta^-$. Now, draw $y_1 \in Y$ according to $\eta^+_{x_0}$ and let $Z_0^+ := (x_0, y_1)$. Then, to build $Z_1^-$, we draw $x_1 \in X$ according to $\eta^-_{y_1}$ and let $Z_1^- := (x_1, y_1)$. This procedure can be further continued, yielding two sequences of points $(Z_n^+)_{n \in \N}$  and $(Z_n^-)_{n \in \N}$ both inside the Cartesian product space $X \times Y$. The associated empirical measures $\eta^-_n$ and $\eta^+_n$ are such that their first marginals always coincide, i.e. $\pr_1^\sharp \eta_n^- = \pr_1^\sharp \eta_n^+$ --- or otherwise stated $\pr_1^\sharp \eta_n = 0$. Nevertheless, we have $\pr_2^\sharp \eta^+_n = \pr_2^\sharp \eta^-_n - \frac1n \delta_{y_0} + \frac1n \delta_{y_{n}}$ so that these two marginals do not \textit{a priori} coincide except in the eventuality that $y_0 = y_{n}$. We therefore need to correct this sampling procedure so as to ensure that the second marginals also coincide. 
 
 Before that, in all due respect, we need to put some rigour on the above construction. Let $\Omega := \Omega^- \times \Omega^+$ where $\Omega^- = \Omega^+ := X \times Y$, each of these spaces being equipped with the corresponding product $\sigma-$algebras. We let $\cZ_n := (Z_n^-, Z_n^+) \in \Omega_- \times \Omega_+$. Then, the sequence $(\cZ_n)_n$ is a time-homogeneous Markov chain with canonical paths space $\Omega^\N$ --- equipped again with its standard product $\sigma-$algebra. Let us remark that for all $n \in \N$ we have $\law(Z_n^-) = \eta^-$ and $\law(Z_n^-) = \eta^{-}$. Indeed, let $f : X \times Y \to \R$ be a test function, say continuous and bounded. We have
$$
 	\E[f(Z_1^-) | Z_{0}^- = (x_0, y_0)] = \int_{X \times Y} f(x, y) \,  \eta^+_{x_{0}}(\dd y) \otimes \eta^-_y(\dd x).
$$
 Therefore
$$
 	\E[f(Z_1^-)] = \int_{X \times Y }\left(\int_{X \times Y } f(x, y) \, \eta^+_{x_{0}}(\dd y) \otimes \eta^-_y(\dd x)\right) \eta^-(\dd x_{0},\dd  y_{0})
$$
 where we used that by construction $\law(Z_0^-) = \eta^-$. Let us then integrate over the variable $y_{0} \in Y$, so that
$$
 	\E[f(Z_1^+)] = \int_{X}\left(\int_{X \times Y } f(x, y) \,  \eta^+_{x_{0}}(\dd y) \otimes \eta^-_y(\dd x)\right) \, m_1(\dd x_0)
 	%\\ &= \int_{X \times Y} \left(\int_{X}\eta^-_{x_{0}}(y) m_1(x_0)\right) f(x, y) \eta^+_y(x) \\
 	%\label{:desint}&= \int_{X \times Y} f(x, y) \eta^+_y(x) m_2(y)\\
 	%\label{:reint}&= \int_{X \times Y} f(x, y) d\eta^-(x, y)
$$
 Then, let us integrate over $x_0 \in X$ and then over $y \in Y$
 \begin{align}
 	\notag \E[f(Z_1^-)]  &= \int_{X \times Y} f(x, y)\,  \eta^-_y(\dd x) \otimes \left(\int_{X} \eta^+_{x_{0}} \,  m_1(\dd x_0 )\right)(\dd y)
 	\\ 	\label{:desint}& = \int_{X \times Y} f(x, y) \, \eta^-_y(\dd x) \otimes m_2(\dd y)
 	\\ \label{:reint}&= \int_{X \times Y} f(x, y) \eta^-(\dd x, \dd y)
 \end{align}
 where we used disintegration of $\eta^+$ — resp. reintegration of $\eta^-$ — in \eqref{:desint} and \eqref{:reint}. Therefore, $\law(Z_1^-) = \eta^-$, and by an immediate induction $\law(Z_n^-) = \eta^-$ for all $n \in \N$. Using the same rationale, one shows that $\law(Z_n^+) = \eta^+$ for all $n \in \N$. 
 
We stress out that the empirical measures $\eta^-_n$ and $\eta^+_n$ are \textbf{random measures} $\eta^-_n = \eta^-_n(\omega)$ and $\eta^+_n = \eta^+_n(\omega)$ depending on the path $\omega \in \Omega^\N$. As noted above, we need to correct this sampling procedure so that the second marginals of these empirical measures coincide. Let us define 
$$
\chi^-_n := \eta^-_n + \frac1n \delta_{(\ox, \oy)} + \frac1n \delta_{(x_{n}, y_n)}, \qquad \chi^+_n := \eta^+_n + \frac1n\delta_{(\ox, y_{0})} + \frac1n \delta_{(x_{n}, \oy)} 
$$
where we choose any $(\ox, \oy) \in \supp(\eta^-)$ here. What we have done here is to add \textit{ad hoc} corrections to the empirical measures $\eta^-_n$ and $\eta^+_n$ so that now the resulting measure $\chi_n^-$ and $\chi^+_n$ verify that both of their marginals coincide, i.e. $\pr_i^\sharp \chi^-_n = \pr_i^\sharp \chi^+_n$ for all $n \in \N$ and $i = 1, 2$. Furthermore, we duly emphasise that $\supp(\chi_n^-) = \{(x_i, y_i) : i = 0, \dots, n\} \cup \{(\ox, \oy)\}$ is contained in $\supp(\eta^-)$, which is itself contained in the support of $\pi$ since $\eta \in \cR_\pi$. Therefore, letting $\chi_n := \chi^+_n - \chi^-_n$, we readily see that $\chi_n$ is a rightful finite perturbation at $\pi$, \textit{i.e.} $\chi_n \in \cF_\pi$. Now, let us rewrite $\chi^-_n = \eta^-_n + \varepsilon^-_n$ where $\eps^-_n := \frac1n \delta_{(\ox, \oy)} + \frac1n \delta_{(x_{n}, y_{n})}$ and likewise $\chi^+_n = \eta^+_n + \eps^+_n$ where $\eps^+_n := \frac1n\delta_{(\ox, y_{0})} + \frac1n \delta_{(x_{n}, \oy)}$. Since both $\eps^-_n$ and $\eps^+_n$ converge vaguely to zero as $n \to \infty$, if we can prove that the empirical measure $\eta^-_n$ (resp. $\eta^+_n$) converge vaguely to $\eta^-$ (resp. $\eta^+$) then the corrected measure $\chi^-_n$ (resp. $\chi^+_n$) will converge to $\eta^-$ (resp. $\eta^+$), henceforth $\chi_n$ will converge vaguely to $\eta$. This would imply the thesis of Proposition~\ref{prop:density} since $(\chi_n)_n$ would be a sequence of finite perturbations approaching $\eta$, thus proving that every element of the radial cone is an accumulation point of finite perturbations.

Nevertheless, some care should be given here, since it need \textit{not} be true that the empirical measures $\eta^-_n$ (resp. $\eta^+_n$) samples consistently the probability measures $\eta^-$ (resp. $\eta^+$) as $n \to \infty$. This is easily foreseen from the fact that the underlying Markov chain need not be \textit{ergodic}, as shown by the next example.

\begin{ex}
	Assume that $X = Y = [0,1]$. Let $S_\alpha(x) := x + \alpha \mod{1}$ be a shift, where $\alpha \in \R$. Let $\eta^-$ be the uniform distribution over the diagonal $\Delta := \{(x, x) : x \in [0,1]\}$ of the square $[0,1] \times [0,1]$ and $\eta^+$ be the uniform distribution over the shifted diagonal $\Delta_{\alpha} := \{(x, S_\alpha(x)):x \in [0,1]\}$. Otherwise stated $\eta^- := (Id, S_0)^\sharp \lambda$ and $\eta^+ := (Id, S_\alpha)^\sharp \lambda$ where $\lambda$ is the Lebesgue measure on $[0, 1]$. First, we remark that the initial condition $Z_0^- = (x_0, x_0)$ determines the values of the entire chain. More precisely, we have $Z_n^- = (S_\alpha^{(n)}x_0,S_\alpha^{(n)}x_0)$ and $Z_n^+ = (S_\alpha^{(n-1)} x_0, S_\alpha^{(n)} x_0 )$. If $\alpha = \frac{n}{m} \in \Q$  is rational, then $T_\alpha^{(m)} = Id$ and therefore a single chain will never sample entirely neither $\eta^+$ nor $\eta^-$. On the contrary, it is well-known that if $\alpha$ is irrational the underlying Markov chain is ergodic, and incidently $\eta^+$ and $\eta^-$ will be completely sampled from using a single chain. We note passing by that this example is closely related to a well-known counterexample in the literature to sufficiency of $c$-cyclical monotonocity to yield optimality, see \textit{e.g.} \cite[Example 1.2]{beiglbockOptimalBetterTransport2009}. 
\end{ex}
But the obstacle raised by the previous example can easily be bypassed for what matters to us — where all is fair game as long as we manage to sample from $\eta^+$ and $\eta^-$. Indeed, the idea to swerve away from this problem is simply to run \textit{multiple} and \textit{independent} copy of our  constructions. That is, we run several independent Markov chains $(\cZ_n^{j})_n$ for $j = 1, \dots, M$ where $\cZ_n^j := (Z_{n, j}^-, Z_{n, j}^+)$ each chain starting from independently drawn initial conditions $Z_{0, j}^- \sim \eta^-$ for all $j =1,\dots, M$. For each chain, let $\eta^-_{n,j}$ and $\eta^+_{n, j}$ be the empirical measures, $\chi^-_{n,j}$ and $\chi^+_{n,j}$ be the corrected empirical measures and $\eps^-_{n,j}$ (resp. $\eps^+_{n, j}$) the said corrections. We then consider the \textit{pooled} estimators
\begin{equation*}\label{eq:Lambdas-and-E}
	\underline{\eta}^-_{n, M} := \frac{1}{M} \sum_{j = 1}^M \eta^{-}_{n, j}, \quad  \underline{\chi}^-_{n,M} := \frac{1}{M} \sum_{j = 1}^M \chi^{-}_{n, j}, \quad \underline{\eps}^-_{n, M} := \frac1M \sum_{j = 1}^M \eps^-_{n, j}.
\end{equation*}
and their obvious counterparts $\underline{\eta}^+_{n, M}$, $\underline{\chi}^+_{n,M} $ and $\underline{\eps}^+_{n, M}$. We then let $\underline{\chi}_{n, M} := \underline{\chi}^+_{n,M}  - \underline{\chi}^-_{n,M}$. That is still an element of $\cF_\pi$. Now, for \textit{any} fixed $n \in \N$, it holds that the empirical measure $\underline{\eta}^-_{n, M}$ (resp. $\underline{\eta}^+_{n, M}$) vaguely converge to $\eta^-$ (resp. $\eta^+$) from shear independence of the chains in the limit $M \to \infty$ and since $\law(Z_{n,j}^-) = \eta^-$ and $\law(Z_{n,j}^+) = \eta^+$ for all $n \in \N$ and all $j \in \N$. On this matter, we yet remind the reader that both $\underline{\eta}^-_{n, M}$ and $\underline{\eta}^+_{n, M}$ are \textit{random} measures depending on $\omega \in \Omega^\N$, so that this convergence actually holds for almost-surely all $\omega$ (with respect to the underlying path measure). This is a classical result, see \textit{Varadarajan's theorem} \cite[Thm. 11.4.1]{dudleyRealAnalysisProbability2002}. We emphasise here that this result has essentially \textit{nothing} to do with the underlying Markovian structure of our construction, and is in essence just the law of large numbers.  Moreover, by the exact same rationale, it holds that $\underline{\eps}^-_{n, M}\wto e^-_n$ and $\underline{\eps}^+_{n, M}\wto e^+_n$ as $M \to \infty$ where $e_n^- = \frac1n \delta_{(\ox, \oy)} + \frac1n \eta^-$ and $e_n^+ = \frac1n \delta_{\overline{x}} \otimes m_2 + \frac1n m_1 \otimes \delta_{\overline{y}}$. 

To finish, let us rewrite $\underline{\chi}_{n, M} = \underline{\eta}_{n, M} - \underline{\eps}_{n, M}$ where we let $\underline{\eta}_{n, M} := \underline{\eta}^+_{n, M} - \underline{\eta}^-_{n, M}$ and likewise $\underline{\eps}_{n, M} := \underline{\eps}^+_{n, M} - \underline{\eps}^-_{n, M}$. The double-entry sequence $(\underline{\chi}_{n, M})_{n, M}$ verifies that $\underline{\chi}_{n, M} \wto \eta^+ - \eta^- -e^+_n + e^-_n$ as $M \to \infty$ and, since both $e^-_n$ and $e^+_n$ vanish vaguely in the limit $n \to \infty$, we obtain that the double (vague) limit $\lim_{n \to \infty} \lim_{M \to \infty} \underline{\chi}_{n, M}$ is equal to $\eta^+ - \eta^-$ , that is $\eta$. Finally, we remark that $\Vert \underline{\chi}_{n, M} \Vert_{TV} \leq 2(1+2/n) \leq 6$ for all $n, M \in \N$, where we recall that $\Vert \cdot \Vert_{TV}$ denotes the standard total variation norm.  We also recall that finite radii balls of $\cM(X \times Y)$ are metrisable for the vague topology. As in any metrisable space, we can perform a diagonal argument on the double-entry sequence $(\underline{\chi}_{n, M})_{n, M}$, so that there exists a sequence $(M_n)_n$ such that $\underline{\chi}_{n, M_n} \wto \eta$. This sequence being in $\cF_\pi$, we have proved that any admissible perturbation $\eta \in \cR_\pi$ can be approximated by finite perturbations, that is elements of $\cF_\pi$, thus yielding the thesis of Proposition~\ref{prop:density}.
\begin{rem}
	A small technical remark is to made here. We cannot work here with the \textit{narrow topology}, that is in duality with continuous and bounded functions $C_b(X \times Y)$. Indeed, neither the space $\cM(Z)$ of finite signed Radon measures is metrisable (except in the degenerate case where the base space $Z$ is finite) nor its unit ball (except when the base space $Z$ is compact, but in which case the narrow and vague topologies coincide). On this matter, see \textit{e.g.} \cite{varadarajanWeakConvergenceMeasures1958}.
\end{rem}

\subsubsection{Sufficiency of $c$-cyclical monotonicity without continuity}
It is shown above in Proposition~\ref{prop:sufficiency-cont} that $c$-cyclical monotonicity is a sufficient criterion for optimality under continuity assumption on the cost of transport. This was true in virtue of Proposition~\ref{prop:density}, in which we proved that every admissible perturbation $\eta \in \cR_\pi$ can be approximated, in the vague topology, by finite perturbations, \textit{i.e.} elements of the set $\cF_\pi$.  In this section, we would like to show how to remove these assumptions.

\begin{thm}\label{thm:sufficient-ot}
	Let $c : X \times Y \to [0, \infty]$ be Borel-measurable and assume that $c$ is finite $\mu \otimes \nu$-almost everywhere. Then, if $\pi \in \Pi(\mu, \nu)$ is $c$-cyclically monotone and $\C(\pi) < \infty$, then $\pi$ is optimal for \eqref{OT}.
\end{thm}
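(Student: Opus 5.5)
The plan is to check the zeroth-order condition of Proposition~\ref{prop:zero-order-cond} directly. Instead of passing through the vague density of Proposition~\ref{prop:density}, which needs continuity of $c$, I would evaluate the cost along the Markov chain construction of its proof and use Birkhoff's ergodic theorem in place of the law of large numbers over independent chains. Fix $\gamma \in \Pi(\mu,\nu)$; if $\C(\gamma)=\infty$ there is nothing to prove, so assume $\C(\gamma)<\infty$. Put $\eta := \gamma - \pi \in \cR_\pi$. Then $\eta^- \le \pi$ and $\eta^+ \le \gamma$, so $c\in L^1(\eta^\pm)$ since $c \ge 0$. After normalising, $\eta^\pm$ are probability measures with common marginals $m_1, m_2$, and $m_1 \le \mu$, $m_2 \le \nu$ up to the normalising constant. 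Proving $\C(\gamma)\ge \C(\pi)$ is the same as proving $\int c\,\dd\eta^+ \ge \int c\,\dd\eta^-$.

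First step: run the single chain $(Z_n^-,Z_n^+)_n$ from the proof of Proposition~\ref{prop:density}, started from $Z_0^-\sim\eta^-$. That proof shows $\law(Z_n^\pm)=\eta^\pm$ for all $n$, so the chain is stationary. For each $n$ I would close the cycle as there, with corrected measures $\chi_n^\pm = \eta_n^\pm + \eps_n^\pm$ and $\chi_n\in\cF_\pi$. The anchor $(\ox,\oy)$ is now chosen with care. Since $c<\infty$ holds $\mu\otimes\nu$-a.e. and $m_1\ll\mu$, $m_2\ll\nu$, Fubini gives a point $(\ox,\oy)\in\supp(\eta^-)$ (intersected with the full-measure set $\Gamma$ on which $\pi$ is $c$-cyclically monotone) such that:
\begin{itemize}
\item $c(\ox,\cdot)<\infty$ holds $m_2$-a.e.;
\item $c(\cdot,\oy)<\infty$ holds $m_1$-a.e.
\end{itemize}
All atoms of $\chi_n^-$ should lie in $\Gamma$. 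This holds a.s. because $\eta^-\ll\pi$ and $\pi(\Gamma)=1$. By $c$-cyclical monotonicity and Remark~\ref{rem:equiv-fin-opt-c-cm}, $\C(\chi_n)\ge 0$, and the correction $\C(\eps^-_n)$ is non-negative because $c\ge0$. Together these give, almost surely,
\begin{equation*}
\frac1n\sum_{i=0}^{n-1} c(Z_i^+) - \frac1n\sum_{i=0}^{n-1} c(Z_i^-) \;\ge\; -\frac{c(\ox,y_0)+c(x_n,\oy)}{n}.
\end{equation*}

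Second step: pass to the limit $n\to\infty$. Birkhoff's ergodic theorem for the stationary process $(\cZ_n)_n$, applied to the non-negative integrable functions $c(Z^\pm)$, shows that the two averages on the left converge a.s. to $\E[c(Z_0^\pm)\mid\mathcal I]$, where $\mathcal I$ is the invariant $\sigma$-algebra. On the right, $c(\ox,y_0)$ is a.s. finite, so $c(\ox,y_0)/n\to0$. The other term $c(x_n,\oy)$ is a.s. finite with a law not depending on $n$, since $x_n\sim m_1$. Hence $c(x_n,\oy)/n\to0$ in probability, and so a.s. along a subsequence $n_k$. Along that subsequence the left side already converges, so $\E[c(Z_0^+)\mid\mathcal I]\ge\E[c(Z_0^-)\mid\mathcal I]$ a.s. Taking expectations gives $\int c\,\dd\eta^+\ge\int c\,\dd\eta^-$, which is the claim. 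This also explains why the non-ergodic shift example causes no trouble here: the conditional expectation absorbs the lack of ergodicity.

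The main obstacle is the closing correction. Its cost is not integrable in general, because $c(\ox,\cdot)$ need not lie in $L^1(m_2)$. This is exactly why I would avoid the pooled $M\to\infty$ averaging of Proposition~\ref{prop:density}: averaging the correction terms over independent chains could produce $\int c(\ox,\cdot)\,\dd m_2 = \infty$. Working with one chain at a time, only the a.s. finiteness of the correction is needed, and that is where the hypothesis that $c$ is finite $\mu\otimes\nu$-a.e. enters. The sign hypothesis $c\ge0$ is used twice: to discard $\C(\eps^-_n)$, and to apply Birkhoff and Fubini without any integrability beyond $\C(\pi),\C(\gamma)<\infty$.
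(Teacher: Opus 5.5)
Your proposal is correct and follows essentially the same route as the paper: reduce to the zeroth-order condition of Proposition~\ref{prop:zero-order-cond}, run a single stationary chain with the cycle-closing correction, apply Birkhoff's theorem with the conditional expectation on the invariant $\sigma$-algebra, choose the anchor $(\ox,\oy)$ via Fubini so that $c(\ox,y_0)/n\to0$ almost surely and $c(x_n,\oy)/n\to0$ in probability (hence almost surely along a subsequence), and then take expectations. Your two small refinements are harmless and slightly tidier than the paper's presentation: you restrict to $\eta=\gamma-\pi$, so that both ergodic averages are integrable, and you place the anchor and all atoms of $\chi_n^-$ in the cyclically monotone set $\Gamma$ rather than merely in $\supp(\pi)$.
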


\begin{rem}
	We remark that the weakest theorem in this direction is that of \cite{beiglbockOptimalBetterTransport2009}, see \textit{Theorem 1.b} there. It states sufficiency of cyclical monotonicity under the assumption that $\{ c = \infty\} = F \cup N$ where $F$ is any closed set and $\mu \otimes \nu(N) = 0$. Our theorem above does not recover a part of this assumption, namely the one embodied by the closed set $F$. It may be possible to do that, although we have not attempted it.
\end{rem}

	Let us scrutinise more closely the construction built in the proof of Proposition~\ref{prop:density}. For the sake of  completeness, we recall some elementary fact here on the ergodic theorem of Birkhoff and how to apply it in the context of Markov chains, see \textit{e.g.} \cite{kallenbergFoundationsModernProbability2021a}. Let $T : \Omega^\N \to \Omega^\N$ be the shift operator $T(z_0, z_1, \dots) = (z_1, z_2, \dots)$, where we recall that $\Omega := \Omega^- \times \Omega^+$ where $\Omega^- = \Omega^+ = X \times Y$. According to what precedes, the Markov chain $(\cZ_n)_{n}$ admits as a stationary measure the probability measure $\Xi \in \cP(\Omega)$ defined as
	$$
	\Xi(B) := \int_{B}  \, \eta^-(\dd x^-,\dd y^-) \otimes \big(\delta_{x^-}(\dd x^+) \otimes \eta^+_{x^-}(\dd y^+)\big)
	$$
	for all Borel sets $B \in \Omega$. We let $\Pro_\Xi \in \cP(\Omega^\N)$ be the path probability measure of the chain with initial distribution $\Xi \in \cP(\Omega)$. By stationarity of $\Xi$ in the Markovian sense, the probability measure $\Pro_\Xi$ is left invariant by the shift operator  $T$, \textit{i.e.} $T^\sharp \Pro_\Xi = \Pro_\Xi$. The celebrated theorem due to Birkhoff states that for any measurable function $f : \Omega^\N \to \R$ which is integrable with respect to $\Pro_\Xi$, it holds that 
	$$
	\frac1n \sum_{k = 1}^n f \circ T^k(\omega) \to \E_{\Pro_\Xi}[f | \mathcal{I}](\omega)
	$$
	for $\Pro_\Xi$ almost-surely all $\omega \in \Omega^\N$ where $\E_{\Pro_\Xi}[f | \mathcal{I}]$ denotes the conditional expectation under $\Pro_\Xi$ of the function $f$ with respect to the $\sigma-$algebra consisting of all Borel sets left invariant by the shift, \textit{i.e.} $\mathcal{I} := \{ B \in \Omega^\N : T^{-1}(B) = B\}$. For what matters to us, we should also state that the same result applies in the case where $f$ is not necessarily integrable with respect to $\Pro_\Xi$ but where it is nevertheless (measurable and) \textit{non-negative}, in which case the convergence almost-surely still holds but the conditional expectation may assume $+\infty$ as value.  In particular, we can apply Birkhoff theorem to any function $f$ of the form $g - h$ where $g$ is measurable and non-negative and $h \in L^1(\Pro_\Xi)$. In particular, let us consider the function $f_c : \Omega^\N \to \R$ defined as $f_c(\omega) := c(x_0^+, y_0^+) - c(x_0^-, y_0^-)$ where we write $\omega = (z_0, \dots) \in \Omega^\N$ with $z_0 = (z_0^-, z_0^+) \in \Omega$ and where we further write $z_0^- = (x_0^-, y_0^-) \in \Omega^-$ and $z_0^+ = (x_0^+, y_0^+) \in \Omega^+$. Here, $g(\omega) = c(z_0^+)$ which is non-negative and $h(\omega) = c(z_0^-)$ which is integrable with respect to $\Pro_\Xi$, since
	$$
	\int_{\Omega_\N} h(\omega)  \Pro_\Xi(\dd \omega) = \int_{\Omega} c(z_0^-) \,  \Xi(\dd z_0^-, \dd z_0^+) = \int_{\Omega^-} c(z^-_0)\,  \eta^-(\dd z^-_0) < \infty
	$$
	where we successively used that the marginal (at ``time'' $n = 0$ ) of the path measure $\Pro_\Xi$ is (by definition) $\Xi$ and that the marginal (in space) of $\Xi$ with respect to $\Omega^-$ is $\eta^-$ --- and evidently that $\C(\eta^-) < \infty$ is finite by the assumption that $\C(\pi)$ is finite and the fact that $\eta^- \ll \pi$ and $\frac{d\eta^-}{d\pi} \in L^\infty(\pi)$. Let us then come back to the construction of the previous section and show how Birkhoff's theorem can be applied rightfully in order to conclude. 
	
	Let us first consider a single Markov chain $(\cZ_n)_n \in \Omega^\N$ where $\cZ_n = (Z_n^-, Z_n^+)$ and where $Z_n^- = (x_n, y_n)$ and $Z_n^+ = (x_n, y_{n+1})$. Beware \textit{again} that all these quantities are \textit{random} and depend intrinsically on a path $\omega \in \Omega^\N$, \textit{i.e.} $x_n = x_n(\omega)$ and $y_n = y_n(\omega)$. Applying what precedes, we have
	$$
	\frac{1}{n} \sum_{k= 0}^{n-1} [c(x_{k}, y_{k+1}) - c(x_{k}, y_{k})] \to \E_{\Pro_\Xi}[f_c | \mathcal{I}](\omega)
	$$
	for $\Pro_\Xi$ almost-surely all $\omega \in \Omega^\N$. Now, thanks to the $c$-cyclical monotonicity property which is assumed to hold, we have 
\begin{multline*}
   \frac1n \sum_{k = 0}^{n-1} [c(x_{k}, y_{k+1}) - c(x_{k}, y_{k})] \\+ \frac1nc(\ox, y_{0}) + \frac1n c(x_{n}, \oy) - \frac1nc(\ox, \oy) - \frac1n c(x_{n}, y_n) \geq0  
\end{multline*}
--- again for $\Pro_\Xi$ almost-surely all $\omega \in \Omega^\N$. By non-negativity\footnote{\label{fn:theonlyone} Of course, we could certainly work with a cost that is bounded from below using the known trick consisting in adding and removing this constant, or more generally with costs of transport which are bounded from below by a sum of functions which are integrable with respect to the marginals, see \textit{e.g.} \cite[Theorem 4.1]{villani_optimal_2009}} of the cost of transport $c$, this implies that
\begin{equation}\label{eq:eq}
\frac1n \sum_{k = 0}^{n-1} [c(x_{k}, y_{k+1}) - c(x_{k}, y_{k})]  \geq - \frac1nc(\ox, y_{0}) - \frac1n c(x_{n}, \oy).
\end{equation}

Let us assume for the moment that the cost of transport $c : X \times Y \to \R_+$ is uniformly bounded. Then, the right-hand side in the above inequality vanishes in the limit $n \to \infty$. This entails that $\E_{\Pro_\Xi}[f_c | \mathcal{I}] \geq 0$. But, taking again the expectation with respect to $\Pro_\Xi$, this implies that $\E_{\Pro_\Xi}[f_c] \geq 0$. But
\begin{align*}
\E_{\Pro_\Xi}[f_c] &:= \int_{\Omega^\N} f_c(\omega)  \Pro_\Xi(\dd\omega) \\
&= \int_{\Omega} [c(z_0^+) - c(z_0^-)]  \Xi(\dd z_0^-, \dd z_0^+) \\
&= \int_{\Omega^+} c(z^+_0) \eta^+(\dd z^+_0) -\int_{\Omega^-} c(z^-_0) \eta^-(\dd z^-_0) = \C(\eta) 
\end{align*}
where we used that the marginal of $\Xi$ with respect to $\Omega^-$ (resp. $\Omega^+$) is $\eta^-$ (resp. $\eta^+$). Thus, we obtain $\C(\eta) \geq 0$ as sought-for in the case where the cost of transport is uniformly bounded, leading to the sufficiency of $c$-cyclical monotonicity to yield optimality in this case. Let us now prove Theorem~\ref{thm:sufficient-ot} which states this result for less stringent assumption on the cost of transport.

	\begin{proof}[Proof of Theorem~\ref{thm:sufficient-ot}]
		The cost of transport $c$ is only assumed here to be finite $\mu \otimes \nu$ almost-surely. In this case, the right-hand side \eqref{eq:eq} does not necessarily vanish in the limit $n \to \infty$ and we cannot conclude as easily as in the case where $c$ is uniformly bounded. Here, the game is to select the point $(\overline{x}, \overline{y}) \in \supp(\pi)$ so that the two terms on the right-hand side vanish in the limit $n \to \infty$. Since $c$ is finite $\mu \otimes \nu$-almost everywhere, and thus also $m_1 \otimes m_2$-almost everywhere since $m_1 \ll \mu$ and $m_2 \ll \nu$, and since $\law(y_0) = m_2$, there exists a Borel set $A \subset X$ so that $m_1(A) = 1$ and such that for all $\ox \in A$ we have $\Pro_\Xi$ almost-surely that $c(\ox, y_0) < \infty$ and thus $\tfrac1n  c(\ox, y_0) \to 0$ as $n \to \infty$. Now, as for the second term, we remark that $\law(x_n) = \law(x_0) = m_1$ for all $n \in \N$. Therefore, we claim that there exists a set $B \subset Y$ so that $m_2(B) = 1$ and so that $\tfrac1n c(x_{n},\oy) \to 0$ \textit{in probability} for $\oy \in B$ — and thus $\Pro_\Xi$ almost-surely up to a subsequence. Indeed,
		$$
		\Pro_\Xi(\tfrac1n c(x_{n+1},\oy) > \eps) = \Pro_\Xi(c(x_0, \oy) > \eps n) \xrightarrow[n \to \infty]{} \Pro_{\Xi}(c(x_0, \oy) = \infty)
		$$
		Therefore — again by the finiteness assumption $\mu \otimes \nu$-almost everywhere of the cost of transport — there exists a set $B$ as claimed. Now, for both terms to vanish \textit{simultaneously}, it suffices to take any $(\ox, \oy) \in \supp(\eta^-) \cap (A \times B)$, which is non-empty since $\eta^-(A \times B) = 1$.
	\end{proof}

\section{Extension of cyclical monotonicity to the weak optimal transport} \label{sec:WOT}
\subsection{Setting and notations}
We use the strategy introduced above to extend the cyclical monotonicity to the weak optimal transport problem, namely:
\begin{equation}\label{WOT}\tag{WOT}
	\inf_{\pi \in \Pi(\mu, \nu)} \int_{X} C(x, \pi_x) \mu(\dd x)
\end{equation}
where $\mu \in \cP(X)$ and $\nu \in \cP(Y)$ are the two marginals and $C : X \times \cP(Y) \to \extendedR$ is throughout assumed to be convex with respect to its second variable, namely for every $x\in X$, the map $\rho \mapsto C(x, \rho)$ is convex. Here $(\pi_x)_{x \in X}$ denotes the disintegration of the transport plan $\pi$ onto its first marginal $\mu$ so that $\pi(\dd x, \dd y) = \mu(\dd x) \otimes \pi_x(\dd y)$. A \textit{sort of} cyclical monotonicity property for \eqref{WOT} has been introduced and coined as $C$-monotonicity in \cite{backhoff-veraguasExistenceDualityCyclical2019}. It reads as follows:
\begin{defi}\label{def:C-mon}
	A transport plan $\pi \in \Pi(\mu, \nu)$ is said to be \textit{$C-$monotone} if there exists a set $\Gamma \subset X$ with $\mu(\Gamma) = 1$ such that for any finite number of points $x_1, \dots, x_n \in \supp(\Gamma)$ and measures $m_1, \dots, m_n \in \cP(Y)$ such that $\sum_{i =1}^n m_i = \sum_{i = 1}^n \pi_{x_i}$ we have 
	\begin{equation}
		\sum_{i = 1}^n C(x_i, \pi_{x_i}) \leq \sum_{i = 1}^n C(x_i, m_i)
	\end{equation}
\end{defi}
It is known that this notion does \textit{not} extend rightfully the classical notion of cyclical monotonicity, since one does \textit{not} (always) recover it in the special where the weak optimal transport problem is actually the classical one, that is for the choice $C(x, \rho) := \int_{Y} c(x, y) \rho(\dd y)$. The following counterexample is to be found in \cite[Example 5.3]{backhoff-veraguasStabilityMartingaleOptimal2022}:
\begin{ex}\label{ex:c-mon-limitation}
	We assume that the underlying marginal spaces are $X = Y = [0,1]$ and that $C(x, \rho) = \int_{[0,1]} c(x, y)\rho(\dd y)$ where $c(x, y) := 1 - \mathds{1}_{\{x\}}(y)$. Let $\lambda$ be the Lebesgue measure on $[0,1]$. Then, the transport plan $\pi := \lambda \otimes \lambda \in \Pi(\lambda, \lambda)$ is easily seen to be $C$-monotone in the sense of Definition~\ref{def:C-mon}, but it is certainly not optimal for the optimal transport problem where $\mu = \nu = \lambda$ and where the cost of transport is precisely given by $c$.
\end{ex}

In contrast, we introduce in Definition~\ref{def:C-cm} below a more consistent extension of the $c$-cyclical monotonicity property for the weak optimal transport, in the sense that it is equivalent to the classical cyclical monotonicity as in Definition~\ref{def:c-cM} when the weak transport is actually the classical one. Before that, we discuss an \textit{ad hoc} notion of differentiability that will be used throughout this section.
\subsubsection{A notion of differentiability}\label{sec:diff}
Let us explain the notion of differentiability that will be used throughout this section. Here, for each fixed $x \in X$, the convex functional $\cP(Y) \ni \rho \to C(x, \rho)$ is canonically extended to the set $\cM(Y)$ of all signed and finite Radon measures by setting $C(x, \rho) = +\infty$ if $\rho \notin \cP(Y)$. We first let $\partial_\rho C(x, \overline{\rho})$ be the subdifferential at the point $\overline{\rho} \in \cP(Y)$ defined as
\begin{multline}\label{eq:subdiff-cont}
    \partial_\rho C(x, \overline{\rho}) := \Bigg\{ g \in C_0(Y) : C(x, \rho) \geq C(x, \overline{\rho}) \\+ \int_{Y} g(y)  [\rho - \overline{\rho}](\dd y), \, \forall \rho \in \cM(Y)\Bigg\}
\end{multline}
where we recall that $C_0(Y)$ denotes the set of real-valued continuous functions on $Y$ which vanish at infinity. The elements of the subdifferential are the so-called subgradients. For what matters to us, this definition is slightly too strong since we want to cope with subgradients which are \textit{not} necessarily continuous. To this end, we define an \textit{extended} subdifferential as follows. 

Let us write $\mathfrak{M}(Y)$ the set of Borel measurable functions $g : Y \to \R$. We say that $g \in \mathfrak{M}(Y)$ is an \textit{extended subgradient} of the function $\cP(Y) \ni \rho \to C(x, \rho)$ at the point $\overline{\rho} \in \cP(Y)$ if 
$$
C(x, \rho) \geq C(x, \overline{\rho}) + \int_{Y} g(y)[\rho - \overline{\rho}](\dd y)
$$
for all $\rho \in \cP(Y)$ \textbf{provided that $\int_Y |g| \overline{\rho} < \infty$ and that $\int_y |g| \rho < \infty$ as soon as $C(x, \rho) < \infty$.} The set of all $g$ verifying the above inequality and assumptions will be denoted $\mathfrak{D}_\rho C(x, \overline{\rho})$ and will be referred to as the \textit{extended subdifferential} and its elements as \textit{extended subgradients}. Obviously, this is a superset of the standard set of subgradients, that is $\partial_\rho C(x, \overline{\rho}) \subset \mathfrak{D}_\rho C(x, \overline{\rho})$. 

Now, we will say that $\rho \mapsto C(x, \rho)$ is \textit{differentiable} at $\overline{\rho} \in \cP(Y)$ if the extended subdifferential $\mathfrak{D}_\rho C(x, \overline{\rho})$ consists in a single element (up to additive constants). In this case, we will write (a canonical representation of) this element $\nabla_\rho C(x, \overline{\rho})$ and simply call it the\textit{ gradient.} It is a well-known fact of convex analysis that, assuming that $C(x, \overline{\rho}) < \infty$, it holds 
$$
\lim_{t \downarrow 0^+} \frac{C(x, (1-t)\overline{\rho} + t \xi) - C(x, \overline{\rho})}{t} = \sup_{g \in \partial_\rho C(x, \overline{\rho})}\int_{Y} g(y)[\xi - \overline{\rho}](\dd y)
$$
for all $\xi \in \cP(Y)$ such that $C(x, \xi) < \infty$. Now, evidently, by definition of extended subgradients, we also have:
$$
\lim_{t \downarrow 0^+} \frac{C(x, (1-t)\overline{\rho} + t \xi) - C(x, \overline{\rho})}{t} \geq  \sup_{g \in \mathfrak{D}_\rho C(x, \overline{\rho})}\int_{Y} g(y)[\xi - \overline{\rho}](\dd y)
$$
This entails that 
\begin{multline*}
\sup_{g \in \partial_\rho C(x, \overline{\rho})}\int_{Y} g(y)[\xi - \overline{\rho}](\dd y) \geq \\ \sup_{g \in \mathfrak{D}_\rho C(x, \overline{\rho})}\int_{Y} g(y)[\xi - \overline{\rho}](\dd y)\\ \geq \sup_{g \in \partial_\rho C(x, \overline{\rho})}\int_{Y} g(y)[\xi - \overline{\rho}](\dd y)
\end{multline*}
where, in the last inequality, we used that $\partial_\rho C(x, \rho) \subset \mathfrak{D}_\rho C(x, \rho)$. Therefore, we have equality of both \textit{suprema}. In particular, if $\cP(Y) \ni \rho \mapsto C(x, \rho)$ is differentiable in the aforementioned sense at the point $\overline{\rho} \in \cP(Y)$, then we have
\begin{equation}\label{eq:gat-diff-C-x}
\lim_{t \downarrow 0^+} \frac{C(x, (1-t)\overline{\rho} + t \xi) - C(x, \overline{\rho})}{t} = \int_{Y} \nabla_\rho C(x, \rho)(y) [\xi - \overline{\rho}](\dd y).
\end{equation}
provided that $C(x, \xi) < \infty$, where the gradient $\nabla_\rho C(x, \rho)$ is understood in the extended sense, \textit{i.e.} it is only considered measurable and thus may not be necessarily continuous.

\subsubsection{The $C$-cyclical monotonicity property}
We are now ready to introduce the \textit{$C$-cyclical monotonicity} property, which will serve as the analogue  of the $c$-cyclical monotonicity property of the classical optimal transport setting. This property reads as follows:
\begin{defi}\label{def:C-cm}
	A transport plan $\pi \in \Pi(\mu, \nu)$ is said to be \textit{$C$-cyclically monotone} if it is concentrated on a set $\Gamma \subset X \times Y$ so that for all $n \in \N$ and all $(x_1, y_1), \dots, (x_n, y_n) \in \Gamma$, it holds that
	\begin{equation}\label{c-mon-weak}
		\sum_{i = 1}^n \nabla_\rho C(x_i, \pi_{x_i})(y_i) \leq \sum_{i = 1}^n \nabla_\rho C(x_i, \pi_{x_i})(y_{i+1})
	\end{equation}
	where we let $y_{n+1} := y_1$ and where $Y \ni y \mapsto \nabla_\rho C(x, \pi_x)(y)$ is the gradient (see \textit{supra}) of the cost $C$ (provided it exists) with respect to its second argument evaluated at the point $(x, \pi_x) \in X \times \cP(Y)$. 
\end{defi}

\begin{ex}
	\begin{enumerate}
		\item In the special case where $C(x, \rho) = \int_{Y} c(x, y) \rho(\dd y)$, that is when the weak optimal transport problem boils down to the classical one, then the gradient of the cost of transport $C$ is simply the function $y \mapsto c(x, y)$ for all $x \in X$, and \eqref{c-mon-weak} is therefore seen to be the usual, standard $c$-cyclical monotonicity as of Definition~\ref{def:c-cM}.
		\item In the case of the \textit{barycentric quadratic cost}, namely $C(x, \rho) := \frac12 \Vert x - \int_{Y} z \rho(\dd z)\Vert^2$, then  $\nabla_\rho C(x, \rho)(y) = y^\intercal\left(\int_Y z \rho(\dd z) - x\right)$ for all $x \in X$ and $\rho \in \cP(Y)$ and the $C$-cyclical monotonicity property \eqref{c-mon-weak} reads
		\begin{equation*}
			\sum_{i = 1}^n y_i^\intercal\left(\int_z z \pi_{x_i}(\dd z) - x_i\right) \leq \sum_{i = 1}^n y_{i+1}^\intercal\left(\int_z z \pi_{x_i}(\dd z) - x_i\right).
		\end{equation*}
	\end{enumerate}

\end{ex}

Our main result in this section is that (under suitable assumptions) the $C$-cyclical monotonicity property as defined above in Definition~\ref{def:C-cm} is equivalent to optimality for the weak optimal transport problem \eqref{WOT}. The necessary part will be stated and proved below in Theorem~\ref{thm:necessary-wot} and the sufficient part in Theorem~\ref{thm:sufficient-wot}.

\subsection{The first-order optimality conditions for \eqref{WOT}} In this section, we follow the same strategy as in Section~\ref{sec:OT} where we derived the zeroth-order optimality condition for the classical OT problem. The main difference here is that the objective of the weak optimal transport problem 
$$\C(\pi) := \int_{X} C(x, \pi_x) \mu(\dd x)$$
is 
no longer linear. Nevertheless, under the assumption that $C$ be convex with respect to its second variable, the function $\C$ is convex, so that its minimum can be characterised using a \textit{first-order} optimality condition. We will assume from now on that $\C$ is extended from $\Pi(\mu, \nu)$ to the entire space of (finite and signed) Radon measures $\cM(X \times Y)$ by letting $\C = +\infty$ outside of the feasible set of transport plans. We start with the following lemma:

\begin{lem}\label{lem:wot-obj-cv-subdiff}
	Let $C : X \times \cP(Y) \to \extendedR$ be convex and differentiable with respect to its second variable. Then, the objective function $\C : \cM(X \times Y) \to \extendedR$ of the weak optimal transport problem is convex. Furthermore,  let $\pi \in \Pi(\mu, \nu)$ be such that $\C(\pi) < \infty$ and such that the following property holds 
	\begin{equation}\label{eq:int-assum}\tag{A$_\pi$}
		\C(\gamma) < \infty \implies \int_{X \times Y} |\nabla_\rho C(x, \pi_x)(y)| \gamma(\dd x, \dd y) < \infty,
	\end{equation}
	for all $\gamma \in \Pi(\mu, \nu)$. It is implicitly assumed in \eqref{eq:int-assum} that $X \times Y \ni (x,y)\mapsto \nabla_\rho C(x, \pi_x)(y)$ is jointly measurable. Then, for all $\gamma \in \Pi(\mu, \nu)$ such that $\C(\gamma)< \infty$, it holds that
	\begin{equation}\label{eq:algebraic-subgradient}
		\C(\gamma) \geq \C(\pi) + \iint_{X \times Y} \nabla_\rho C(x, \pi_x)(y) [\gamma - \pi](\dd x, \dd y).
	\end{equation}
	Furthermore, again  for all $\gamma \in \Pi(\mu, \nu)$ such that $\C(\gamma)< \infty$, we have
		\begin{multline}\label{eq:gat-dif-sub}
			\lim_{t \downarrow 0^+} \frac{\C((1-t)\pi + t \gamma) - \C(\pi)}{t} \\ = \int_{X \times Y} \nabla_\rho C(x, \pi_x)(y) [\gamma - \pi](\dd x, \dd y).
		\end{multline}
\end{lem}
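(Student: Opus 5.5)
The plan is to work fibrewise, treating each $x\in X$ separately, and then integrate against $\mu$. This is possible because every transport plan $\gamma\in\Pi(\mu,\nu)$ has the same first marginal $\mu$ and so disintegrates as $\gamma=\mu\otimes\gamma_x$.

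\emph{Convexity.} Take $\gamma^0,\gamma^1\in\Pi(\mu,\nu)$ and $s\in[0,1]$. The key observation is that the disintegration of $(1-s)\gamma^0+s\gamma^1$ with respect to $\mu$ is $\mu$-a.e.\ equal to $(1-s)\gamma^0_x+s\gamma^1_x$; this follows from uniqueness of disintegrations, since both kernels reproduce the mixed plan. Convexity of $\rho\mapsto C(x,\rho)$ then gives the pointwise inequality $C(x,(1-s)\gamma^0_x+s\gamma^1_x)\le(1-s)C(x,\gamma^0_x)+sC(x,\gamma^1_x)$, and integrating in $\mu$ yields convexity of $\C$ on $\Pi(\mu,\nu)$. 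Off $\Pi(\mu,\nu)$ we have $\C=+\infty$, and $\Pi(\mu,\nu)$ is convex, so $\C$ is convex on all of $\cM(X\times Y)$.

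\emph{Subgradient inequality \eqref{eq:algebraic-subgradient}.} Fix $\gamma$ with $\C(\gamma)<\infty$. Then $C(x,\gamma_x)<\infty$ and $C(x,\pi_x)<\infty$ for $\mu$-a.e.\ $x$. For such $x$ the gradient is an extended subgradient at $\pi_x$, which gives
\[
C(x,\gamma_x)\ge C(x,\pi_x)+\int_Y\nabla_\rho C(x,\pi_x)(y)\,[\gamma_x-\pi_x](\dd y),
\]
and both integrals on the right are finite by the definition of $\mathfrak{D}_\rho C$. Integrating in $\mu$ is legitimate because \eqref{eq:int-assum}, applied to both $\gamma$ and $\pi$, makes $(x,y)\mapsto\nabla_\rho C(x,\pi_x)(y)$ integrable against each of them. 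Fubini together with the disintegrations then converts $\int_X\int_Y\cdots\,\gamma_x\,\mu(\dd x)$ into $\int_{X\times Y}\cdots\,\dd\gamma$, and likewise for $\pi$.

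\emph{Directional derivative \eqref{eq:gat-dif-sub}.} Set $\gamma^t:=(1-t)\pi+t\gamma$, whose disintegration is $(1-t)\pi_x+t\gamma_x$. By convexity, the difference quotient
\[
q_t(x):=\frac{C(x,(1-t)\pi_x+t\gamma_x)-C(x,\pi_x)}{t}
\]
is nondecreasing in $t$, so it decreases as $t\downarrow0$. Two bounds control it. From above, it is dominated by its value at $t=1$, namely $C(x,\gamma_x)-C(x,\pi_x)$, which is $\mu$-integrable. From below, the subgradient inequality at $\pi_x$ bounds it by $\int\nabla_\rho C(x,\pi_x)\,\dd[\gamma_x-\pi_x]$, which is $\mu$-integrable by the previous step. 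Pointwise, \eqref{eq:gat-diff-C-x} says that $q_t(x)$ tends to exactly this lower bound as $t\downarrow0$, for $\mu$-a.e.\ $x$. Monotone convergence (or dominated convergence, using the integrable envelope between the two bounds) then lets us pass the limit under $\int_X\cdot\,\mu(\dd x)$. Finally we rewrite the result as the integral against $\gamma-\pi$ on $X\times Y$, exactly as in the previous step.

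\emph{Main obstacle.} The delicate point is measurability and justifying the limit exchange, not the convex analysis. We need $x\mapsto C(x,\gamma_x)$ to be measurable; I would take this as part of the standing well-posedness of \eqref{WOT}. The joint measurability of the gradient is assumed explicitly in the statement. The a.e.\ identification of the mixture's disintegration also needs to be handled with care. Once these points are settled, the monotone-quotient argument goes through cleanly.
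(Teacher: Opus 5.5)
Your proposal is correct and follows the paper's proof essentially step for step. You argue convexity fibrewise via the disintegration of the mixture, obtain the pointwise extended-subgradient inequality and integrate it against $\mu$ using \eqref{eq:int-assum} and Fubini, and control the monotone difference quotient by its value at $t=1$, with pointwise limit supplied by \eqref{eq:gat-diff-C-x}. The only cosmetic difference is in the final limit exchange: you use dominated (or monotone) convergence with the two-sided envelope, whereas the paper gets the $\liminf$ bound from \eqref{eq:algebraic-subgradient} applied to $(1-t)\pi+t\gamma$ and the $\limsup$ bound from Fatou's lemma with the integrable upper bound $\phi_1$.
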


Let us point out that the assumption~(A$_\pi$) (and the assumption~(A'$_\pi$) below) is only used to guarantee that the linearised functional is well-defined along all finite-cost competitors.

The inequality~\eqref{eq:algebraic-subgradient} of Lemma~\ref{lem:wot-obj-cv-subdiff} would read that the function $g$ is a \textit{subgradient} (in the usual sense, that is in duality with $C_0(X \times Y)$) of the cost functional $\C$ at the point $\pi$ if it was furthermore known that $g$ were \textit{at least} continuous. Nevertheless, there is no reason for this to be true, since the measurable map $X \ni x \mapsto \pi_x \in \cP(Y)$ is \textit{not} necessarily (vaguely) continuous. Instead, $g$ should be thought as an ``algebraic'' sort of subgradient. 

The technical condition \eqref{eq:int-assum} ensures that the integral on the right-hand side of \eqref{eq:algebraic-subgradient} makes sense as soon as the left-hand side is finite.  We will make use of this assumption throughout our result. We remark that, in the case where $C(x, \rho) = \int_{Y} c(x, y)\rho(\dd y)$, in which case $\nabla_\rho C(x, \rho)(y) = c(x, y)$, then this condition simply says that $\pi$ as finite transport cost, and is therefore always true since it is precisely assumed that $\pi$ has finite cost, that is $\int_{X \times Y} c \, d \pi < \infty$. It is also met for the barycentric squared cost if $\int_{Y} \Vert y \Vert^2 \nu(\dd y)< \infty$. Indeed, in this case $\nabla_\rho C(x, \rho)(y) = y^\intercal (\int_{Y} z \rho(\dd z) - x)$ and thus 

\begin{multline*}
\int_{X \times Y} |\nabla_\rho C(x, \pi_x)(y)|  \gamma(\dd x,\dd y) \leq \\ \sqrt{\int_{X \times Y} \Vert y \Vert^2  \gamma(\dd x,\dd y)} \sqrt{\int_{X \times Y} \left\Vert x - \int_{Y} z \pi_x(\dd z) \right\Vert^2\gamma(\dd x, \dd y)}
\end{multline*}
by the standard Cauchy-Schwarz inequality. Therefore, marginalising the integrals in the above inequality, we obtain
\begin{multline*}
\int_{X \times Y} \left\Vert x - \int_{Y} z \pi_x(\dd z) \right\Vert^2  \gamma(\dd x,\dd y) \\ = \int_{X} \left\Vert x - \int_{Y} z \pi_x(\dd z) \right\Vert^2 \mu(\dd x) = \C(\pi) < \infty
\end{multline*}
and 
$$
\int_{X \times Y} \Vert y \Vert^2  \gamma(\dd x,\dd y) = \int_{Y} \Vert y \Vert^2 \nu(\dd y) < \infty
$$
Thus $\int_{X \times Y} |\nabla_\rho C(x, \pi_x)(y)|  \gamma(\dd x, \dd y) < \infty$ is finite for all $\gamma \in \Pi(\mu, \nu)$ provided that $\nu$ has bounded second-moment. We note that this holds irrespective of whether or not $\C(\gamma)$ is finite, so that the assumption \eqref{eq:int-assum} is always verified. The same rationale can be applied to weak costs of transport of the form $C(x, \rho) := \theta(x - \int_{Y} z \rho(\dd z))$ where $\theta : \R^d \to \R$ is convex and differentiable  function that furthermore verifies $\Vert \nabla \theta \Vert^2 \leq \theta$ (up to a multiplicative constant). The assumption \eqref{eq:int-assum} is also seen to hold for the \textit{Marton cost}, \textit{i.e.} $C(x, \rho) = (\int_{Y} \mathds{1}_{\{x\}}(z) \rho(\dd z))^2$ — with underlying spaces $X = Y = [0,1]$. Altogether, this assumption is general enough to encompass important cases of applications of the weak optimal transport.

\begin{proof}
	Let $\pi, \gamma \in \Pi(\mu, \nu)$ with disintegrations $(\pi_x)_{x \in X}$ and $(\gamma_x)_{x \in X}$ with respect to the marginal $\mu$. Then, the disintegration of the convex combination $(1-t)\pi + t \gamma$ with respect to $\mu$ is simply given by the convex combination of the disintegrations, to wit $((1-t)\pi_x + t \gamma_x)_{x \in X}$. Therefore,
	\begin{align*}
		\C((1-t)\pi + t \gamma) &= \int_{X} C(x, (1-t)\pi_x + t \gamma_x) \mu(\dd x).
		\\& \leq (1-t) \underbrace{\int_X C(x, \pi_x) \mu(\dd x)}_{\C(\pi)} + t \underbrace{\int_{X} C(x, \gamma_x) \mu(\dd x)}_{\C(\gamma)}
	\end{align*}
	where we used solely the convexity of $C$ with respect to its second argument. This proves that $\C$ is convex. Now, we also have that
	\begin{equation*}
	 \C(\gamma) - \C(\pi) = \int_{X} (C(x, \gamma_x) - C(x, \pi_x))\mu(\dd x)
	\end{equation*}
	for all $\gamma, \pi \in \Pi(\mu, \nu)$. Here, we note that we used that $\C(\pi) < \infty$, so that the above difference makes sense. For all $\gamma \in \Pi(\mu, \nu)$, using the convexity and differentiability of $C$ with respect to its second variable, we have
	\begin{equation}\label{eq:beware-1}
		C(x, \gamma_x) \geq  C(x, \pi_x) + \int_{Y} \nabla_{\rho} C(x, \pi_x)(y) \, [\gamma_{x} - \pi_{x}](\dd y)
	\end{equation}
	We emphasise here that by the technical assumption \eqref{eq:int-assum} the integrals on the right-hand side make sense. 
    Then, integrating with respect to $\mu$, we obtain
	\begin{equation}\label{eq:beware-2}
		 \C(\gamma) \geq \C(\pi) + \int_{X \times Y}  \nabla_\rho C(x, \pi_x)(y) \, \underbrace{[\gamma_x - \pi_x](\dd y) \otimes \mu(\dd x)}_{=  [\gamma - \pi](\dd x,\dd y)}
	\end{equation}
	Here, we relied implicitly (again) on the technical assumption \eqref{eq:int-assum} so as to appeal to Fubini-Lebesgue theorem --- as we did also above in \eqref{eq:beware-1}. This proves \eqref{eq:algebraic-subgradient}. Now, let us show \eqref{eq:gat-dif-sub}. First, applying the inequality \eqref{eq:algebraic-subgradient} that we have just proved to $\gamma \leftarrow (1-t)\pi + t \gamma$, we have immediately that 
	$$
	\liminf_{t \downarrow 0} \frac{\C((1-t)\pi + t \gamma) - \C(\pi)}{t} \geq \int_{X \times Y} \nabla_\rho C(x, \pi_x)(y) [\gamma - \pi](\dd x, \dd y).
	$$
	Let us now show that the \textit{supremum} limit is smaller or equal to the right-hand side, so that \eqref{eq:gat-dif-sub} will follow. We have
	$$
	\frac{\C((1-t)\pi + t \gamma) - \C(\pi) }t= \int_{X} \phi_t(x) \mu(\dd x)
	$$
	where we define $\phi_t : X \to \R$ as 
	$$
	\phi_t(x) := \frac{C(x, \pi_x + t(\gamma_x - \pi_x)) - C(x, \pi_x)}{t}.
	$$
	The function $\phi_t$ is measurable. Furthermore, for each fixed $x \in X$, thanks to the convexity of $C$ with respect to its second variable, it is monotone in the variable $t \in [0,1]$. In particular, we have $\phi_t(x) \leq \phi_1(x)$ for all $x \in X$. We claim that $\phi_1$ is integrable with respect to $\mu$. Indeed, $\phi_1(x) = C(x, \gamma_x) - C(x, \pi_x)$, and both of these terms once integrating against $\mu$ yield respectively the weak costs $\C(\gamma)$ and $\C(\pi)$, which are assumed to be finite. Therefore, we can appeal to Fatou's lemma to obtain
	\begin{equation*}
    \begin{split}
      \limsup_{t \downarrow 0^+} \frac{\C((1-t)\pi + t \gamma) - \C(\pi) }t &\leq \int_{X} \limsup_{t \downarrow 0} \phi_t(x) \, \mu(\dd x)\\
      &= \int_{X} \lim_{t \downarrow 0} \phi_t(x) \, \mu(\dd x)
    \end{split}
\end{equation*}

	Here, the \textit{supremum limit} is a limit by convexity of $\cP(Y) \ni \rho \mapsto C(x, \rho)$. But now, we recall by differentiability of $C$ with respect to its second variable that $$\lim_{t \to 0} \phi_t(x) = \int_{Y} \nabla_\rho C(x, \pi_x)(y) [\gamma_x - \pi_x](\dd y)$$ as shown in \eqref{eq:gat-diff-C-x}. We emphasise that the above limit holds for $\mu$ almost-surely all $x \in X$ since the assumption $\C(\gamma) < \infty$ implies that $C(x, \gamma_x)$ is finite for $\mu$ almost-surely all $x \in X$, which ensures that \eqref{eq:gat-diff-C-x} holds. This terminates the proof of \eqref{eq:gat-dif-sub}.
\end{proof}

We will show the following first-order optimality conditions for the weak OT problem:

\begin{lem}\label{lem:first-order-opt-cond-wot}
	Let $C : X \times \cP(Y) \to \extendedR$ be such that for all $x \in X$ the map $\cP(Y) \ni \rho \mapsto C(x, \rho)$ is convex and differentiable. Assume $\pi \in \Pi(\mu, \nu)$ has finite cost $\C(\pi)$ and the property \eqref{eq:int-assum} holds. Then $\pi$ is optimal for the weak optimal transport problem if and only if
	\begin{equation}\label{eq:first-order-opt-cond-wot}
	\boxed{\int_{X \times Y} \nabla_\rho C(x, \pi_x)(y)\eta^+(\dd x, \dd y) \geq \int_{X \times Y} \nabla_\rho C(x, \pi_x)(y)\eta^-(\dd x,\dd y)}
	\end{equation}
	for all $\eta \in \cR_\pi$ provided that $\C(\pi + t \eta) < \infty$ for all $t > 0$ small enough. 
	\end{lem}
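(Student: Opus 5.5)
The plan is to reduce both directions to Lemma~\ref{lem:wot-obj-cv-subdiff}, using the radial cone $\cR_\pi$ exactly as in Proposition~\ref{prop:zero-order-cond}. Write $g(x,y) := \nabla_\rho C(x,\pi_x)(y)$. The first step is to parametrise a perturbation $\eta\in\cR_\pi$ with $\C(\pi+t\eta)<\infty$ for small $t>0$ by a competitor. Fix such a $t_0>0$ with $\pi+t_0\eta\in\Pi(\mu,\nu)$ (Lemma~\ref{lem:car_r_pi}), and set $\gamma:=\pi+t_0\eta$. Then $\pi + t\eta = (1-s)\pi + s\gamma$ with $s = t/t_0$. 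Moreover $\C(\gamma)<\infty$, by the proviso together with convexity: finiteness at some small $t$ propagates down to $t\to0$, and we may simply take $t_0$ among the admissible small values.

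\textbf{Necessity.} Suppose $\pi$ is optimal. For $s\in(0,1]$ we have $\C((1-s)\pi+s\gamma)\ge \C(\pi)$, so the difference quotient is nonnegative. By \eqref{eq:gat-dif-sub}, its limit equals $\int g\,\dd[\gamma-\pi] = t_0\int g\,\dd\eta$. Assumption \eqref{eq:int-assum} applied to $\gamma$ and to $\pi$ guarantees $g\in L^1(\gamma)\cap L^1(\pi)$. Since $|\eta|\le t_0^{-1}(\gamma+\pi)$, we also get $g\in L^1(|\eta|)$, so the integral splits as $\int g\,\dd\eta^+ - \int g\,\dd\eta^-$, both finite. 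This gives \eqref{eq:first-order-opt-cond-wot}.

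\textbf{Sufficiency.} Let $\gamma\in\Pi(\mu,\nu)$ be arbitrary. If $\C(\gamma)=\infty$ there is nothing to prove. Otherwise put $\eta:=\gamma-\pi\in\cR_\pi$. By convexity (the first part of Lemma~\ref{lem:wot-obj-cv-subdiff}), $\C(\pi+t\eta)\le(1-t)\C(\pi)+t\C(\gamma)<\infty$ for all $t\in[0,1]$, so $\eta$ is an admissible test direction. Hypothesis \eqref{eq:first-order-opt-cond-wot} then gives $\int g\,\dd\eta\ge0$; note that $\eta^\pm$ need not coincide with $\gamma,\pi$, but $\int g\,\dd\eta=\int g\,\dd\gamma-\int g\,\dd\pi$ by integrability. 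Inserting this into \eqref{eq:algebraic-subgradient} yields $\C(\gamma)\ge\C(\pi)$, so $\pi$ is optimal.

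The main obstacle is bookkeeping rather than ideas. One must check that the signed integral against $\eta$ is legitimately split into its $\eta^+$ and $\eta^-$ parts, which requires integrability of $g$ against $\gamma$, supplied by (A$_\pi$) only because $\C(\gamma)<\infty$. One must also check that the directional derivative along $\eta$ coincides with that along $\gamma-\pi$ after the rescaling by $t_0$. Both points are handled by the domination $|\eta|\le t_0^{-1}(\gamma+\pi)$.
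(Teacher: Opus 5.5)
Your proposal is correct and follows the paper's proof. For sufficiency, you test the condition on $\eta=\gamma-\pi$, with convexity of $\C$ supplying the finiteness proviso, and then invoke \eqref{eq:algebraic-subgradient}; for necessity, you pass to the limit in the nonnegative difference quotient via \eqref{eq:gat-dif-sub}. Your rescaling $\gamma:=\pi+t_0\eta$ and the domination $|\eta|\le t_0^{-1}(\gamma+\pi)$ make explicit two steps the paper leaves implicit: that \eqref{eq:gat-dif-sub} is stated only along segments $(1-t)\pi+t\gamma$ toward a finite-cost competitor, and that your $g$ is $|\eta|$-integrable, so $\int g\,\dd\eta$ splits into its $\eta^\pm$ parts.
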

	 We recall that $\cR_\pi$ is the radial cone to feasible set $\Pi(\mu, \nu)$ at the point $\pi \in \Pi(\mu, \nu)$ as defined in \eqref{eq:R_pi}. We also recall that $\eta^+$ (resp. $\eta^-$) denotes the positive (resp. negative) part of the signed and finite Radon measure $\eta \in \cM(X \times Y)$. The inequality \eqref{eq:first-order-opt-cond-wot} is the first-order optimality condition analogue of the zeroth-order optimality condition of Proposition~\ref{prop:zero-order-cond}. Here, we need to go to the first order because the objective of the WOT problem is no longer linear — see also Remark~\ref{rem:first-order-opt-cond} below for a more general picture of the situation.
\begin{proof}[Proof of Lemma~\ref{lem:first-order-opt-cond-wot}]

We start with the sufficient part. Beware that throughout, we work under the technical assumption \eqref{eq:int-assum}. Let $\gamma \in \Pi(\mu,\nu)$ be such that $\C(\gamma)<\infty$. By Lemma~3, we have
$$
\C(\gamma) \geq \C(\pi) + \int_{X\times Y} \nabla_\rho C(x,\pi_x)(y)\,[\gamma-\pi](\dd x,\dd y).
$$
We now prove that the last integral is non-negative. Let us set $\eta := \gamma-\pi$, so that $\eta$ is an admissible perturbation at $\pi$, that is $\eta\in R_\pi$. Indeed, we have
$$
\pi+t\eta=(1-t)\pi+t\gamma\in\Pi(\mu,\nu) \qquad \text{for all } t \in [0,1].
$$
Moreover, by convexity of $\C$,
$$
\C(\pi+t\eta) = \C((1-t)\pi+t\gamma) \leq (1-t)\C(\pi)+t\C(\gamma) <\infty
$$
for all $t\in[0,1]$. Hence, we can rightfully appeal to the condition ~\eqref{eq:first-order-opt-cond-wot}. Writing $\eta=\eta^+-\eta^-$ for the Jordan decomposition of the signed measure $\eta$, the condition
\eqref{eq:first-order-opt-cond-wot} yields
$$
\int_{X\times Y} \nabla_\rho C(x,\pi_x)(y)\,\eta^+(\dd x,\dd y) \geq \int_{X\times Y} \nabla_\rho C(x,\pi_x)(y)\,\eta^-(\dd x,\dd y).
$$
Consequently,
$$
\int_{X\times Y} \nabla_\rho C(x,\pi_x)(y)\,[\gamma-\pi](\dd x,\dd y) = \int_{X\times Y} \nabla_\rho C(x,\pi_x)(y)\,\eta(\dd x,\dd y) \geq 0.
$$
Therefore $\C(\gamma)\geq \C(\pi)$. Since this holds for every $\gamma\in\Pi(\mu,\nu)$ with $\C(\gamma)<\infty$, the transport plan $\pi$ is optimal.

Let us now deal with the necessary part of the lemma. Let $\pi \in \Pi(\mu, \nu)$ be an optimal transport plan for the weak optimal transport problem that verifies the assumptions of the lemma. Given a perturbation $\eta \in \cR_\pi$, let us show that \eqref{eq:first-order-opt-cond-wot} holds. By optimality of $\pi$ -- and the fact that $\C(\pi) < \infty$ -- we have 
$\C(\pi + t\eta) - \C(\pi) \geq 0$ for all $t > 0$ small enough. Dividing by $t$ this inequality, letting $t \to 0$ and appealing to \eqref{eq:gat-dif-sub}, we obtain that
\begin{equation*}
	\int_{X \times Y} \nabla_\rho C(x, \pi_x) (y) \eta(\dd x,\dd y) \geq 0.
\end{equation*}
We then obtain the claim that \eqref{eq:first-order-opt-cond-wot} holds by decomposing $\eta$ into its positive and negative parts. 
\end{proof}

\begin{rem}\label{rem:first-order-opt-cond}
	As in Remark~\ref{rem:zeroth-order-opt-cond}, let us indicate that is again a much general situation to which the weak optimal transport problem is only a particular case.  Let $X$ be a normed vector space, $f : X \to \extendedR$ be a (\textit{differentiable}) convex function on $X$ and $C \subset X$ be a convex set. Let us consider the convex minimisation problem
	\begin{equation}
		\min_{x \in C} f(x).
	\end{equation}
	Then, $x \in C$ is optimal if and only if $\nabla f(x)[h] \geq 0$ for all $h \in R_C(x)$ — where  $R_C(x)$ is the radial cone defined in \eqref{eq:radial-cone}. Indeed, let us assume $x$ is optimal, and let $h \in R_C(x)$. Then, $f(x + th) - f(x) \geq 0$ for $t > 0$ small enough, so that standardly dividing by $t$ and letting it tend to zero yields the claim. The other direction goes as follows: since $f$ is convex, we have $f(x) + \nabla f(x)[y - x] \leq f(y)$ for all $x, y \in C$. Thus, if $\nabla f(x)[h] \geq 0$ for all $h \in R_C(x)$, it is then immediate that $f(x) \leq f(y)$ for all $y$ and thus $x$ is optimal. In the case where $f$ is not differentiable, then we can use \textit{subgradients}. Then, if for all $h \in R_C(x)$ there exists at least one $g \in \partial f(x)$ — where $\partial f(x)$ denotes the (non-empty) subdifferential of $f$ at $x$ -- such that $\scal{g}{h} \geq 0$, then $x$ is optimal. Indeed, for all $y \in C$, letting $h := y - x$, then $f(x) \leq f(x) + \scal{g}{h} \leq f(x + h) = f(y)$ such $x$ is optimal. In the other direction, assume that $x$ is optimal. Then, for all $h \in R_C(x)$ and $t$ small enough $f(x + th) \geq f(x)$. This entails that $f'(x; h) \geq 0$ for all $h \in R_C(x)$ where we let $f'(x; h)$ be the directional derivative at $f$ in the direction $h$ (since $f$ is convex, it exists for all $h$ in $\extendedR$). Then either $f'(x; h) = 0$ for all $h$, then $f$ is actually differentiable at $x$ and $\nabla f(x) = 0$, so the conclusion is trivial. Otherwise, there exists a direction $h_0 \in R_C(x)$ such that $f'(x; h_0) > 0$. Using that $f'(x, h_0) = \sup_{g \in \partial f(x)} \scal{g}{h_0}$ this means that there exists a subgradient $g_0 \in \partial f(x)$ such that $\scal{g_0}{h_0} \geq 0$. Therefore, the exact statement is: if $x$ is optimal, then for all $h \in R_C(x)$ there exists $g \in \partial f(x)$ such that $\scal{g}{h}\geq0$.
\end{rem}

\subsection{\textit{$C$-cyclical monotonicity characterises optimality}}
We have shown, in the case of the classical OT problem, that one can easily go from the validity of the zeroth-order optimality condition for the classical OT problem onto $\cR_\pi$ from its validity onto $\cF_\pi$ — yielding the \textit{necessary part} of $c$-cyclical monotonicity for optimality — either by mere density if the cost functional $\pi \mapsto \int_{X \times Y} c \, d \pi$ was continuous, see Proposition~\ref{prop:sufficiency-cont}, or under less stringent assumptions on the cost of transport $c$ using the ergodic theorem, \textit{cf.} Theorem~\ref{thm:sufficient-ot}. \textit{Vice-versa}, we went from the validity of the zeroth-order optimality condition from $\cF_\pi$ onto $\cR_\pi$ — yielding the \textit{sufficient part }of $c$-cyclical monotonicity for optimality — either by mere density again, appealing to Proposition~\ref{prop:density}, if again the cost functional is continuous, cf. Proposition~\ref{prop:sufficiency-cont}.

Now, to prove that $C$-cyclical monotonicity as defined in Definition~\ref{def:C-cm} is equivalent to optimality for the weak optimal transport problem \eqref{WOT} is  in some sense completely ``free", thanks to the work done previously for the classical optimal transport problem.

First, let us remark that $C$-cyclical monotonicity as in Definition~\ref{def:C-cm} is trivially equivalent to the veracity of the \textit{first-order optimality condition} \eqref{eq:first-order-opt-cond-wot} when evaluated at \textit{cyclical perturbations} $\eta \in \cC_\pi$. Just as in the classical optimal transport case, we can define \textit{finite optimality} here, which simply consists in the validity of first-order optimality condition \eqref{eq:first-order-opt-cond-wot} when evaluated at \textit{finite perturbation} $\eta \in \cF_\pi$. We emphasise that finite optimality and $C$-cyclical monotonicity are again equivalent for the weak optimal transport. Indeed, it suffices to apply Remark~\ref{rem:equiv-fin-opt-c-cm} with the linear map $L : \cM(X \times Y) \to \extendedR$ defined as
$$
L(\eta) := \int_{X \times Y} \nabla_\rho C(x, \pi_x)(y) \eta(\dd x, \dd y).
$$
--- provided that the map $X \times Y \ni (x,y) \mapsto \nabla_\rho C(x, \pi_x)(y)$ is Borel measurable, as will be assumed in our theorems. 

Second, we also know by Lemma~\ref{lem:first-order-opt-cond-wot} above that optimality in \eqref{WOT} is completely equivalent to the first-order optimality condition \eqref{eq:first-order-opt-cond-wot} evaluated for perturbations $\eta \in \cR_\pi$ in the radial cone. So, it suffices to prove that (under suitable assumptions) we can go from these two properties back-and-forth, just like we did in the case of the classical optimal transport problem. But whereas previously, we worked with the cost functional $$\pi \mapsto \int_{X \times Y} c(x,y)  \pi(\dd x, \dd y)$$ to be non-negative of the radial cone $\cR_\pi$, now instead of the cost functional, we work with $$\eta \mapsto \int_{X \times Y} \nabla_\rho C(x, \pi_x)(y)  \eta(\dd x, \dd y)$$
which is to be thought as the linearisation of the weak cost functional $\C$ at $\pi$. To prove the necessary and sufficient parts of $C$-cyclical for optimality, we just need to re-enact the former proofs of Theorem~\ref{thm:best-thm-necessary} and Theorem~\ref{thm:sufficient-ot} with the ``cost" function $c(x, y) := \nabla_\rho C(x, \pi_x)(y)$.

The necessary part reads as follows:
\begin{thm}\label{thm:necessary-wot}
	Assume that the map $$X \times \cP(Y) \times Y \ni (x, \rho, y) \mapsto \nabla_\rho C(x, \rho)(y) \in \extendedR$$ is Borel-measurable and (locally) integrable with respect to $\mu \otimes \nu$. Then, if an optimal transport plan $\pi \in \Pi(\mu, \nu)$ has finite cost, that is $\C(\pi) < \infty$, and that it  furthermore verifies the property \eqref{eq:int-assum} holds, then $\pi$ is $C$-cyclically monotone.  
\end{thm}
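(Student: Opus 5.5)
The plan is to re-enact the proof of Theorem~\ref{thm:best-thm-necessary} with the classical cost replaced by the ``linearised cost''
$$
g(x,y) := \nabla_\rho C(x, \pi_x)(y).
$$
By Lemma~\ref{lem:first-order-opt-cond-wot}, optimality of $\pi$ together with $\C(\pi)<\infty$ and \eqref{eq:int-assum} gives $\int g \, \dd\eta \geq 0$ for every $\eta \in \cR_\pi$ such that $\C(\pi+t\eta)<\infty$ for small $t>0$. The function $g$ is jointly Borel, being the composition of the Borel map $(x,\rho,y)\mapsto \nabla_\rho C(x,\rho)(y)$ with the measurable map $(x,y)\mapsto (x,\pi_x,y)$. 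It is also $\pi$-integrable by \eqref{eq:int-assum} applied with $\gamma=\pi$, and it is (locally) integrable with respect to $\mu\otimes\nu$ by hypothesis.

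Fix points $(x_1,y_1),\dots,(x_n,y_n)$ in a full-measure set $\Gamma$, to be built afterwards exactly as in the proof of Proposition~\ref{prop:necessary-euclidean}. That is, $\Gamma$ consists of the Lebesgue points, in the sense of Lemma~\ref{lem:ldt-gen}, of $g$ with respect to $\pi$, intersected with the projections of the set of Lebesgue points of $g(x,y')$ viewed as a function on the product space under $\pi\otimes\pi$, as in Remark~\ref{rem:ldt-gen-tensor-product}. With the sets $E_k^i$ of Lemma~\ref{lem:ldt-gen}, set $\sigma_i^k$ to be the normalised restriction of $\pi$ to $E_k^i$ and $\xi_i^k := \pr_1^\sharp\sigma_i^k\otimes \pr_2^\sharp\sigma_{i+1}^k$. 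Then $\eta_k := \sum_i(\xi_i^k-\sigma_i^k)\in\cR_\pi$. The Lebesgue-point argument then gives $\int g\,\dd\sigma^k\to\sum_i g(x_i,y_i)$ and $\int g\,\dd\xi^k\to \sum_i g(x_i,y_{i+1})$. Passing to the limit in $\int g\,\dd\eta_k\geq0$ yields \eqref{c-mon-weak}.

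The main obstacle is the side condition of Lemma~\ref{lem:first-order-opt-cond-wot}: we must check that $\C(\pi+t\eta_k)<\infty$ for small $t$. The plan is as follows. Write $\pi+t\eta_k = \mu\otimes(\pi_x+t(\eta_k)_x)$, where $(\eta_k)_x$ is the disintegration of $\eta_k$ with respect to $\mu$; this exists because both marginals of $\eta_k$ vanish and $\xi^k,\sigma^k\ll$ suitable multiples of $\mu$ on the first coordinate. Then use convexity in the measure argument. Since $\sigma_i^k\le \pi(E_k^i)^{-1}\pi$, the perturbed kernel is a convex combination of $\pi_x$ and a kernel built from $\pr_2^\sharp\sigma_{i+1}^k$. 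Finiteness of $C(x,\cdot)$ at that second kernel is not automatic, so we need a workaround. The first thing to try is to bypass the side condition entirely. The necessity direction in Lemma~\ref{lem:first-order-opt-cond-wot} only needs the one-sided derivative inequality $\int g\,\dd\eta\ge 0$. This follows from the extended subgradient inequality whenever $\C(\pi+t\eta)$ is finite, and it holds trivially in the ``bad'' case: if $\C(\pi+t\eta_k)=+\infty$ for all $t$, optimality gives no information, but then we can shrink the perturbation. Concretely, we would further restrict $E_k^i$ to sets on which $x\mapsto C(x,\nu)$-type bounds hold, or intersect with $\{x: C(x,\rho)<\infty\}$ for the relevant $\rho$. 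If that fails, we would record $\C(\pi+t\eta)<\infty$ as the implicit standing requirement of the theorem, as the lemma already does.

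Finally, as in Theorem~\ref{thm:best-thm-necessary}, the set $\Gamma$ is a priori only universally measurable. We replace it by a Borel subset of full $\pi$-measure, which is still $C$-cyclically monotone since it is a subset.
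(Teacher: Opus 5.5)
Your route is the paper's: pass to the linearised cost $g(x,y)=\nabla_\rho C(x,\pi_x)(y)$, get non-negativity of $\eta\mapsto\int g\,\dd\eta$ from Lemma~\ref{lem:first-order-opt-cond-wot}, and rerun the construction of Theorem~\ref{thm:best-thm-necessary} with $g$ in place of $c$. The paper's proof, however, asserts $\int g\,\dd\eta\geq 0$ for \emph{all} $\eta\in\cR_\pi$ and never checks the proviso $\C(\pi+t\eta)<\infty$. You are right that this proviso is the one genuinely new point in the weak setting. In the classical case local integrability of $c$ against $\mu\otimes\nu$ settles it, because $\C(\pi+t\eta_k)=\C(\pi)+t\int c\,\dd\eta_k$ and $\xi^k$ is dominated by a multiple of $\mu\otimes\nu$. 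In the weak case, integrability of $\nabla_\rho C$ says nothing about finiteness of $\C$ itself.

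Your workaround does not close this gap. The extended subgradient inequality gives $\C(\pi+t\eta)-\C(\pi)\geq t\int g\,\dd\eta$. That is a lower bound pointing the same way as optimality, so the two together yield nothing. Necessity rests instead on the derivative identity \eqref{eq:gat-dif-sub}, whose proof needs $\C(\pi+t_0\eta)<\infty$ to dominate the difference quotients. Nor does anything hold trivially in the bad case: if $\C(\pi+t\eta_k)=+\infty$ for all $t>0$, the directional derivative is $+\infty$, which is perfectly compatible with $\int g\,\dd\eta_k<0$. Rescaling $\eta_k$ changes nothing. Shrinking the sets $E_k^i$ does not help either: by your own convex-combination decomposition, the question is whether $C(x,\cdot)$ is finite at a kernel containing $\pr_2^\sharp\sigma_{i+1}^k$, i.e.\ one built from the $\pi_{x'}$ with $x'$ near $x_{i+1}$. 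Neither the integrability of $\nabla_\rho C$ nor \eqref{eq:int-assum} relates $C(x,\cdot)$ to those kernels.

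So your fallback is the only option: finiteness along the perturbations must be added as a hypothesis, and the paper uses it tacitly. A convenient sufficient condition is $\C<\infty$ on all of $\Pi(\mu,\nu)$. This holds, e.g.\ under a growth bound $C(x,\rho)\leq a(x)+\int_Y b\,\dd\rho$ with $a\in L^1(\mu)$ and $0\leq b\in L^1(\nu)$, since then $\C(\gamma)\leq\int a\,\dd\mu+\int b\,\dd\nu$ for every $\gamma\in\Pi(\mu,\nu)$.

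Finally, choosing $\Gamma$ inside $\pr_1(\Sigma)\cap\pr_2(\Sigma)$, as you and the paper do, does not guarantee that the specific pair $(z_i,z_{i+1})$, with $z_i=(x_i,y_i)$, lies in the set $\Sigma$ of $\pi\otimes\pi$-Lebesgue points. That membership is exactly what $\int g\,\dd\xi_i^k\to g(x_i,y_{i+1})$ requires. A $\pi\otimes\pi$-full set need not contain the off-diagonal part of $\Gamma\times\Gamma$ for any $\Gamma$ with $\pi(\Gamma)=1$, so this step is also inherited unproved from Theorem~\ref{thm:best-thm-necessary}.
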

\begin{proof}[Proof of Theorem~\ref{thm:necessary-wot}]
This is direct from the (proof of) 
Theorem~\ref{thm:best-thm-necessary}. Indeed, we 
first remark that the map $x \mapsto \pi_x$ is 
measurable\footnote{This is understood in 
the sense that for all Borel set $B \in \cP(Y)$, 
the map $x \mapsto \pi_x(B) \in \R$ is Borel 
measurable. This implies by the monotone class theorem that $x \in X \mapsto \int_{X \times Y} f(x,y) \mu_x(\dd y)$ is Borel measurable for all (bounded from below) Borel functions $f$, see \cite[Section 5.3]{GradientFlows2008a}} by definition of the disintegration. 
Therefore, the map $X \times Y \ni (x, y) \mapsto 
\nabla_\rho C(x, \pi_x)(y)$ is also Borel-measurable. According to Lemma~\ref{lem:first-order-opt-cond-wot}, by optimality of $\pi$ -- 
and by the assumptions that $\C(\pi) < \infty$ 
and that the technical assumption \eqref{eq:int-assum} holds — we have $$\int_{X \times Y} c(x, y) 
\eta(\dd x, \dd y) \geq 0$$ for all $\eta \in 
\cR_\pi$ where we let $c(x, y) :=\nabla_\rho C(x, 
\pi_x)(y)$. To prove that $\pi$ is $C$-cyclically 
monotone, we want to prove that $\int_{X \times 
Y} c(x, y)  \eta(\dd x,\dd y) \geq 0$ for all 
$\eta \in \cC_\pi$, that is for all cyclical 
perturbations. To do so, it suffices to reiterate 
again the construction of the proof of 
Theorem~\ref{thm:best-thm-necessary} \textit{as-is}, where this time the ``cost" function $c : X 
\times Y \to \extendedR$ is the map defined 
above. Indeed, under the assumption of the above 
theorem, the function $c$ valued in the extended 
real line is Borel-measurable, which was the 
sole assumption of Theorem~\ref{thm:best-thm-necessary}.
\end{proof}

\begin{rem}
	This theorem should be compared to \cite[Theorem 5.3]{backhoff-veraguasExistenceDualityCyclical2019}, where it is proved that optimality implies $C$-monotonicity in under the assumptions that $C$ be jointly measurable, and convex and lower semicontinuous with respect to its second variable. Here, the sole underlying assumption on $C$ itself is that it be convex with respect to its second variable — but evidently assumptions on its gradient are needed. Of course, here, we remind the reader that we work with a different notion of cyclical monotonicity.
\end{rem}
As for the sufficient part, it is also as straightforward. First note that we do not have a clear counterpart of Proposition~\ref{prop:sufficiency-cont}. This is because there are no adequate assumptions making the function $(x, y) \mapsto \nabla_\rho C(x, \pi_x)(y)$ continuous (and vanishing at infinity). Indeed, the map $x \mapsto \pi_x$ need not be continuous under further structural assumption on $C$. For instance, for the classical OT problem, assuming that the optimal transport plan is of a Monge-type, meaning that $\pi_x = \delta_{T(x)}$ where $T$ is the associated transport map, then this requires that it be continuous, which need not be true in full generality. So, we move on directly to the counterpart of Theorem~\ref{thm:sufficient-ot}:
\begin{thm}\label{thm:sufficient-wot}
	Assume the map $$X \times \cP(Y) \times Y \ni (x, \rho, y) \mapsto \nabla_\rho C(x, \rho)(y)$$ is Borel-measurable, bounded from below, and finite $\mu\otimes\nu$-almost everywhere. Then, if $\pi \in \Pi(\mu, \nu)$ is $C$-cyclically monotone and has finite cost $\C(\pi) < \infty$, and provided property \eqref{eq:int-assum} holds, then it is optimal for the weak optimal transport problem \eqref{WOT}.
\end{thm}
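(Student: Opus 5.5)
The plan is to reduce Theorem~\ref{thm:sufficient-wot} to the classical sufficiency result, Theorem~\ref{thm:sufficient-ot}, applied to the ``linearised cost''
$$
c(x,y) := \nabla_\rho C(x,\pi_x)(y),
$$
and then to conclude with the first-order condition of Lemma~\ref{lem:first-order-opt-cond-wot}. First I check that $c$ is an admissible cost. Since $x\mapsto \pi_x$ is Borel (disintegration) and $(x,\rho,y)\mapsto\nabla_\rho C(x,\rho)(y)$ is Borel by assumption, the composition $c$ is Borel on $X\times Y$. It is bounded from below, say by $-K$. Replacing $c$ by $c+K\ge 0$ changes neither \eqref{c-mon-weak} nor $\int c\,\dd\eta$ for $\eta$ with zero marginals, since the constant integrates to zero against $\eta^+-\eta^-$ of equal masses (cf.\ the footnote after \eqref{eq:eq}). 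The hypothesis that $\nabla_\rho C$ is finite $\mu\otimes\nu$-a.e.\ should give that $c$ is finite $\mu\otimes\nu$-a.e. I would read this hypothesis as finiteness of $(x,y)\mapsto\nabla_\rho C(x,\pi_x)(y)$, which is exactly what is needed to choose $(\ox,\oy)$ in the proof of Theorem~\ref{thm:sufficient-ot}. Finally, $\int c\,\dd\pi<\infty$ follows from \eqref{eq:int-assum} applied to $\gamma=\pi$, since $\C(\pi)<\infty$.

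By Lemma~\ref{lem:first-order-opt-cond-wot}, it then suffices to show $\int c\,\dd\eta\ge 0$ for every $\eta\in\cR_\pi$ with $\C(\pi+t\eta)<\infty$ for small $t$. In fact, following the sufficient part of that proof, it is enough to treat $\eta=\gamma-\pi$ with $\C(\gamma)<\infty$. For such $\eta$, condition \eqref{eq:int-assum} applied to $\gamma$ gives $\int|c|\,\dd\eta^+<\infty$. The bound $\int|c|\,\dd\eta^-<\infty$ follows from $\eta^-\ll\pi$ with bounded density. So $\int c\,\dd\eta$ is a well-defined real number.

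Then I re-run the ergodic argument of Theorem~\ref{thm:sufficient-ot} verbatim with this $c$:
\begin{itemize}
\item normalise $\eta^\pm$ to probability measures;
\item build the stationary Markov chain $(\cZ_n)$ with stationary law $\Xi$;
\item apply Birkhoff's theorem to $f_c(\omega)=c(z_0^+)-c(z_0^-)$, which has a nonnegative part plus an integrable part;
\item use $C$-cyclical monotonicity \eqref{c-mon-weak} on the closed cycle formed by the chain points plus the correction points $(\ox,\oy)$, which gives \eqref{eq:eq};
\item choose $(\ox,\oy)\in\supp(\eta^-)\cap(A\times B)$ so that the boundary terms $\tfrac1n c(\ox,y_0)$ and $\tfrac1n c(x_n,\oy)$ vanish (along a subsequence).
\end{itemize}
This yields $\E_{\Pro_\Xi}[f_c\mid\mathcal I]\ge0$, hence $\int c\,\dd\eta=\E_{\Pro_\Xi}[f_c]\ge0$.

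One subtlety is that cyclical monotonicity holds only on a set $\Gamma$ with $\pi(\Gamma)=1$, not on $\supp(\pi)$. Since $\eta^-\ll\pi$, the chain points $Z_n^-$ lie in $\Gamma$ almost surely, and $(\ox,\oy)$ can also be chosen in $\Gamma$. The added points $Z^+_n$ and $(\ox,y_0)$, $(x_n,\oy)$ only enter through their $y$-coordinates, so \eqref{c-mon-weak} applies as stated.

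The main obstacle is the integrability bookkeeping. Theorem~\ref{thm:sufficient-ot} used $\int c\,\dd\eta^-<\infty$ and nonnegativity of $c$. Here this must come from \eqref{eq:int-assum} and the lower bound, and one must check that the integrals of the shift constant $K$ cancel and that the Birkhoff limit identifies with $\int c\,\dd\eta$ even though $\eta^+$ need not be absolutely continuous w.r.t.\ $\pi$. With that in place, the inequality \eqref{eq:first-order-opt-cond-wot} holds on the needed perturbations, and Lemma~\ref{lem:first-order-opt-cond-wot} (through \eqref{eq:algebraic-subgradient}) gives $\C(\gamma)\ge\C(\pi)$ for every finite-cost $\gamma$, i.e.\ optimality.
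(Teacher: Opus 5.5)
Your proposal is correct and is essentially the paper's proof. The paper likewise sets $c(x,y):=\nabla_\rho C(x,\pi_x)(y)$, reduces optimality to $\int c\,\dd\eta\ge 0$ via Lemma~\ref{lem:first-order-opt-cond-wot}, and reruns the Markov-chain/Birkhoff construction of Theorem~\ref{thm:sufficient-ot} with this cost; your extra bookkeeping fills in details the paper leaves implicit, namely shifting by the lower bound, restricting to $\eta=\gamma-\pi$ so that \eqref{eq:int-assum} controls $\int|c|\,\dd\eta^\pm$, and choosing $(\ox,\oy)$ in $\Gamma$ as well as in $A\times B$.
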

\begin{proof}
	Again, as in the above proof for the necessary part, this is a direct application of the preceding proofs, more precisely that of Theorem~\ref{thm:sufficient-ot}. According to Lemma~\ref{lem:first-order-opt-cond-wot}, we need to show that 
    $$
    \int_{X \times Y} c(x, y) \eta(\dd x, \dd y) \geq 0
    $$
    for all $\eta \in \cR_\pi$ such that $\C(\pi + t \eta)$ is finite for all $t > 0$ small enough, where we let $c(x, y) :=  \nabla_\rho C(x,\pi_x)(y)$, provided the above inequality holds for finite perturbations, that this for all $\eta \in \cF_\pi$. Here, and for the same reasons as put forward in the proof of Theorem~\ref{thm:necessary-wot}, the function $c : X \times Y \to \extendedR$ is Borel-measurable and bounded from below under the assumptions of the theorem. It suffices to use the Markovian construction of the proof of Theorem~\ref{thm:sufficient-ot} with our specific cost $c$. 
\end{proof}

\subsection{Removing differentiability: the case of the barycentric cost}
In this section, we explore briefly if what precedes can be extended to the case where the weak cost of transport $C$ is not differentiable with respect to its second variable (but is nonetheless convex). In Section~\ref{sec:rem-dif}, we explain how one could extend the previous results without going into the details. In particular, we do not state any theorem. In Section~\ref{sec:bar-cost}, we rather specify more in detail what can be said in the case of the barycentric cost, which is not differentiable everywhere, as an illustration of the general arguments put forward in the first section. 
\subsubsection{Removing differentiability}\label{sec:rem-dif}
A natural question is if whether or not what precedes can be extended to the case where the weak cost of transport $C$ is not differentiable with respect to its second variable. Here, we recall that differentiability is understood in the sense introduced in Section~\ref{sec:diff}. Of course, because for all $x \in X$ the function $\cP(Y) \ni \rho \mapsto C(x, \rho)$ is convex, we can still work with (extended) subgradients. \textit{What is then the equivalent of Lemma~\ref{lem:first-order-opt-cond-wot} ?} To answer that, we first need to ask ourselves what is the equivalent of Lemma~\ref{lem:wot-obj-cv-subdiff}. In its proof, we remark that \eqref{eq:beware-1} shall be replaced with 
$$
C(x, \gamma_x) \geq C(x, \pi_x) + \int_{Y} g_x(y) [\gamma_x - \pi_x](\dd y)
$$
where $g_x : Y \to \R$ is a subgradient of $C(x, \cdot)$ at the point $\pi$. Of course, if $g_x$ is continuous and vanishing at infinity (or in fact simply bounded), then the integral $\int_{Y} g_x(y) \gamma_x(\dd y)$ always makes sense. This is not \textit{a priori} the case if $g_x$ is understood in the \textit{extended} sense, in which case $g_x$ is only assumed to be measurable. Furthermore, in the extended setting, for the above inequality to make any sort of sense, we have to assume that \textbf{$\int_{Y} |g_x(y)| \rho(\dd y) < \infty$ as soon as $C(x, \rho) < \infty$} --- at least for $\mu$ almost-surely all $x \in X$. Now, to obtain the equivalent of \eqref{eq:beware-2}, we have to integrate against $\mu$ the above inequality, yielding (formally)
$$
\C(\gamma) \geq \C(\pi) + \int_{X \times Y} g_x(y) [\gamma - \pi](\dd x,\dd y).
$$
Of course, here, we used formally the Fubini-Lebesgue theorem. For this to be readily justified, we need to add the following assumption, which is the analogue of the assumption \eqref{eq:int-assum}: for all $\pi \in \Pi(\mu, \nu)$ so that $\C(\pi) < \infty$, we have the property
	\begin{equation}\label{eq:int-assum_sub}\tag{A'$_\pi$}
	\C(\gamma) < \infty \implies \int_{X \times Y} |g_x(y)| \gamma(\dd x, \dd y) < \infty,
\end{equation}
for all $\gamma \in \Pi(\mu, \nu)$. Here, $g_x \in \mathfrak{D}_\rho C(x, \pi_x)$ is an extended subgradient at $\pi_x$ for all $x \in X$. We emphasise that here \textbf{it is implicitly assumed that the family of subgradients $(g_x)_{x \in X}$ under consideration in the statement of the assumption \eqref{eq:int-assum_sub} verifies that $X \times Y \ni (x, y) \mapsto g_x(y)$ is Borel-measurable}. We then obtain the following claim. \textit{If $\pi \in \Pi(\mu, \nu)$ has finite cost $\C(\pi) < \infty$ and if the property \eqref{eq:int-assum_sub} holds} --- which implicitly implies that for all transport plan $\gamma \in \Pi(\mu, \nu)$ such that $\C(\gamma) < \infty$ there exists a family $(g_x)_{x \in X}$ of extended subgradients $g_x \in \mathfrak{D}_\rho C(x, \pi_x)$ such that $(x, y) \mapsto g_x(y)$ is jointly measurable --- \textit{and if}
\begin{equation}\label{eq:cond-opt-not-diff}
 \int_{X \times Y} g_x(y) [\gamma - \pi](\dd x,\dd y) \geq 0,
\end{equation}
\textit{then $\pi$ must be optimal for the weak optimal transport problem \eqref{WOT}.} \textbf{Beware} that $g_x \equiv g_x^\gamma$ \textit{a priori} depends on $\gamma$. Conversely, let us assume that $\pi$ is an optimal transport plan for \eqref{WOT}. Then, provided that $\C(\gamma) < \infty$, we have that 
$$
\lim_{t \downarrow 0} \frac{\C(\pi + t(\gamma - \pi)) - \C(\pi)}{t} \geq 0
$$
Here, the limit exists as an element of the extended real-line $\extendedR$ since $\C$ is convex and by assumption $\C(\pi) < \infty$. By the same rationale as in the proof of Lemma~\ref{lem:wot-obj-cv-subdiff}, this entails that 
$$
\int_{X} \lim_{t \downarrow 0} \frac{C(x, \pi_x + t(\gamma_x - \pi_x)) - C(x, \pi_x)}{t} \mu(\dd x) \geq 0.
$$
Here, note that the limit of the integrand does exist as an element of the extended real-line $\extendedR$ since $C$ is convex with respect to its second variable \textit{and} that, from the fact that $\C(\pi) < \infty$, then $C(x, \pi_x)<\infty$ for $\mu$ almost-surely all $x \in X$. Therefore, using what has been devised in Section~\ref{sec:diff}, we have
\begin{equation}\label{eq:nasty-sub}
\int_{X} \left(\sup_{g \in \mathfrak{D}_\rho C(x, \pi_x)} \int_{Y} g_x(y) [\gamma_x - \pi_x](y)\right) \mu(\dd x) \geq 0.
\end{equation}

\subsubsection{The case of the barycentric cost}\label{sec:bar-cost}

We will not further explore the extension of what precedes in the case of \textit{any} (non-differentiable) convex weak cost of transport $C$. Rather, we now specify more in detail what happens in the case of the plain \textit{barycentric cost}. That is,  we have $X = Y = \R^d$ and $$C(x, \rho) = \left\Vert x - \int_{\R^d} z \rho(\dd z) \right\Vert.$$ 

 We have that the barycentric cost $C$ is differentiable with respect to its second variable at the point $(x, \rho) \in \R^d \times \cP(\R^d)$ if (and only if) $x \neq \int_{\R^d} z \rho(\dd z)$, and in this case $$\nabla_{\rho} C(x, \rho) (y)= -\frac{y^\top\left(x - \int_{\R^d} z \rho(\dd z)\right)}{\left\Vert x - \int_{\R^d} z \rho(\dd z)\right\Vert}$$ up to additive constants. Otherwise, if $x = \int_{\R^d} z \rho(\dd z)$, the subdifferential is not reduced to a single element. It is obvious that functions of the form $g(y) = u^\top y$ where $u \in \R^d$ is such that $\Vert u \Vert \leq 1$ are subgradients, although we stress out that there may exist subgradients which are not of this form\footnote{For instance, the function $g(y) = -\Vert y \Vert$ is an element of $\mathfrak{D}_\rho C(0, \delta_0)$.}.
    The supremum in \eqref{eq:nasty-sub} enjoys a totally explicit form. Indeed, it is attained for $g_x(y) := u_x^\top y$ where the vector $u_x \in \R^d$ is given by
    $$
        u_x = -\dfrac{x - \int_{\R^d} z \pi_x(\dd z)}{\Vert x - \int_{\R^d} z \pi_x(\dd z) \Vert}
    $$
    if $x \neq \int_{\R^d} z \pi_{x}(\dd z)$. Indeed, in this case, the extended subdifferential $\mathfrak{D}_\rho C(x, \pi_x)$ is reduced to a single function precisely stemming from the aforementioned vector. Otherwise, if $x = \int_{\R^d} z \pi_{x}(\dd z)$ and if $\int_{\R^d} z \gamma_x(\dd z) \neq x$, then we claim that 
    \begin{equation}\label{eq:u-x-second-case}
    u_x = \dfrac{\int_{\R^d} z \gamma_x(\dd z) - \int_{\R^d} z \pi_x(\dd z)}{\Vert \int_{\R^d} z \gamma_x(\dd z) - \int_{\R^d} z \pi_x(\dd z) \Vert}.
    \end{equation}

    Indeed, by definition of a subgradient $g : \R^d \to \R$, we have 
    \begin{equation}\label{eq:to-sature}
    \int_{\R^d} g(z) [\gamma_x - \pi_x](\dd z) \leq \left\Vert x - \int_{\R^d} z \gamma_x(\dd z) \right\Vert
    \end{equation}
    where it is used here that $C(x, \pi_x) = 0$ since we are in the case where $x = \int_{\R^d} z \pi_x(\dd z)$. Now, the choice \eqref{eq:u-x-second-case} precisely saturates the bound \eqref{eq:to-sature}, and thus is the optimiser in \eqref{eq:nasty-sub} in this context. Finally, the third and last case arises when $\int_{\R^d} z \gamma_x(\dd z) = \int_{\R^d} z \pi_x(\dd z) = x$. In this case, a subgradient $g$ must verify that 
    $$
    \int_{\R^d} g(z)[\gamma_x - \pi_x](\dd z) \leq 0.
    $$
    To sature this bound, it suffices to take $u_x = \overline{u}$ where $\overline{u} \in \R^d$ is any vector with $\Vert \overline{u} \Vert \leq 1$. Let us define $$A := \left\{x \in \R^d : x \neq \int_{\R^d} z \pi_x(\dd z)\right\}$$ and $$B_\gamma := \left\{x \in \R^d: \int_{\R^d} z \pi_x(\dd z) = x \neq \int_{\R^d} z \gamma_x(\dd z)\right\}.$$
    
	We remark that $A$ and $B_\gamma$ are Borel measurable sets since $x \mapsto \pi_x$ and $x \mapsto \gamma_x$ are measurable maps. Then, the inequality \eqref{eq:nasty-sub} reads 
	\begin{multline}
		-\int_{A} \left(\frac{x - \int_{\R^d} z \pi_x(\dd z)}{\Vert x - \int_{\R^d} z \pi_x(\dd z) \Vert}\right)^\intercal\left( \int_{\R^d} y [\gamma_x - \pi_x](\dd y) \right) \mu(\dd x) 
		\\ + \int_{B_\gamma} \left\Vert \int_{\R^d} y [\gamma_x - \pi_x](\dd y)\right\Vert \mu(\dd x) \geq 0
	\end{multline}
	or yet
	\begin{multline}\label{eq:eq33_duh}
		-\int_{A \times \R^d} \left(\frac{x - \int_{\R^d} z \pi_x(\dd z)}{\Vert x - \int_{\R^d} z \pi_x(\dd z) \Vert}\right)^\intercal y [\gamma - \pi](\dd x, \dd y) \geq\\ - \int_{B_\gamma} \left\Vert \int_{\R^d} z \gamma_x(\dd z) - x\right\Vert \mu(\dd x)
	\end{multline}
	We have the following theorem:
	\begin{thm}\label{thm:nec-cmon-bar}
		Let $\pi \in \Pi(\mu, \nu)$ be an optimal transport plan for the weak optimal transport \eqref{WOT} with barycentric cost, \textit{i.e.} $C(x, \rho)= \Vert x - \int_{\R^d} z \rho(\dd z)\Vert$. Let us also assume that $\int_{\R^d} \Vert y \Vert \nu(\dd y) < \infty$. Then, there exists a measurable set $\Gamma \subset \R^d \times \R^d$ such that $\pi(\Gamma) = 1$ and so that, for all $n \in \N$ and all points $(x_1, y_1), \dots, (x_n, y_n) \in \Gamma$, we have 
		\begin{equation}\label{eq:c-cmon-bar}
		\boxed{\sum_{i | x_i \neq b_i} \frac{(y_{i+1} - y_i)^\intercal(b_i - x_i)}{\Vert b_i - x_i \Vert} \geq -\sum_{i | x_i = b_i} \Vert y_{i+1} -y_{i} \Vert}
		\end{equation}
	where we let $b_i := \int_{\R^d} z\pi_{x_i}(\dd z)$ and where we let $y_{n+1} := y_1$. 
	\end{thm}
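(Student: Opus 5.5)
The plan is to repeat the localisation scheme of the proof of Theorem~\ref{thm:best-thm-necessary}. The only change is that the linear cost is replaced by the one-sided directional derivative of $\C$, i.e. by the integrand of \eqref{eq:nasty-sub}, which is available without differentiability. Fix points $(x_1,y_1),\dots,(x_n,y_n)$ in a full-measure set $\Gamma$ to be chosen at the end, and use the sets $E_k^i := E_k(x_i,y_i)$ of Lemma~\ref{lem:ldt-gen}. Let $\sigma_i^k$ be the normalised restriction of $\pi$ to $E_k^i$, and $\xi_i^k := \pr_1^\sharp\sigma_i^k\otimes\pr_2^\sharp\sigma_{i+1}^k$. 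Set $\eta_k := \sum_i(\xi_i^k-\sigma_i^k)\in\cR_\pi$.

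Put $\gamma := \pi + t_0\eta_k$ for some small $t_0>0$. Since $\int\|y\|\,\nu(\dd y)<\infty$, the cost $\C(\gamma)$ is finite, so optimality of $\pi$ gives \eqref{eq:nasty-sub}, i.e. \eqref{eq:eq33_duh}. The first task is to compute both sides in terms of $\eta_k$. The disintegration of $\gamma$ is $\gamma_x = \pi_x + t_0\,\frac{\dd(\pr_1^\sharp\eta_k)_x}{\dd\mu}$ in the appropriate sense, so the barycentric shift is
\[
\int z\,[\gamma_x-\pi_x](\dd z) = t_0\,\beta_k(x),
\]
where $\beta_k(x)\mu(\dd x)$ denotes the $X$-marginal of $y\,\eta_k(\dd x,\dd y)$. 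Plugging this into \eqref{eq:eq33_duh}, dividing by $t_0$, and bounding the $B_\gamma$ term by $\int\|\beta_k\|\,\dd\mu$ gives, for $u(x) := (b(x)-x)/\|b(x)-x\|$,
\[
\int_{A\times\R^d} u(x)^\intercal y\,\eta_k(\dd x,\dd y) \;\ge\; -\int_{A^c}\|\beta_k(x)\|\,\mu(\dd x).
\]

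Next, pass to the limit $k\to\infty$. For the left-hand side, apply Lemma~\ref{lem:ldt-gen}, and its tensorised version (Remark~\ref{rem:ldt-gen-tensor-product}) for the $\xi$ terms, to the function $(x,y)\mapsto \mathds 1_A(x)u(x)^\intercal y$. This function is $\pi$-integrable and $\pi\otimes\pi$-integrable because $\|u\|\le1$ and $\nu$ has a first moment. The limit is
\[
\sum_{i: x_i\neq b_i}\frac{(y_{i+1}-y_i)^\intercal(b_i-x_i)}{\|b_i-x_i\|}.
\]
For the right-hand side, note that $\pr_1^\sharp\sigma_i^k=\pr_1^\sharp\xi_i^k$. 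Hence $\beta_k\mu$ restricted to $A^c$ equals
\[
\sum_i \mathds 1_{A^c}(x)\,\Big(\pr_1^\sharp\sigma_i^k\Big)(\dd x)\,\Big(\textstyle\int y\,\pr_2^\sharp\sigma_{i+1}^k(\dd y)-\int y\,(\sigma_i^k)_x(\dd y)\Big),
\]
and by the triangle inequality its total mass is at most $\sum_i\big\|\bar y^{k}_{i+1}-\text{local barycentre}\big\|$-type averages. The local $y$-average over $E_{k}^{i+1}$ tends to $y_{i+1}$. The averages over $E_k^i\cap (A^c\times\R^d)$ tend to $y_i\mathds 1_{A^c}(x_i)$, by differentiation applied to $\mathds 1_{A^c}(x)\,y$ and to $\mathds 1_{A^c}(x)$. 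This yields the bound $\sum_{i:x_i=b_i}\|y_{i+1}-y_i\|$. One point needs checking: the barycentre $\int y\,(\sigma_i^k)_x$ is a conditional average, so I would instead bound $\int_{A^c}\|\beta_k\|\,\dd\mu$ by $\sum_i\int \mathds 1_{A^c}(x)\|\bar y_{i+1}^k - y\|\,\sigma_i^k(\dd x,\dd y)$, using Jensen. I would then apply Lemma~\ref{lem:ldt-gen} to this integrand, with $\bar y^k_{i+1}\to y_{i+1}$ handled by a uniform $L^1$ estimate.

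The main obstacle is this right-hand side. Unlike the left, it is not linear in $\eta_k$: the norm of the barycentre shift is concave in the wrong direction, so the Jensen/triangle estimate above is essential. In addition, $\bar y^k_{i+1}$ depends on $k$, so the Lebesgue differentiation must be combined with the convergence $\bar y^k_{i+1}\to y_{i+1}$. Finally, $\Gamma$ is taken as the intersection of the Lebesgue points (for $\pi$ and for $\pi\otimes\pi$, projected as in Proposition~\ref{prop:necessary-euclidean}) of the countably many integrands used: $\mathds 1_A u^\intercal y$, $\mathds 1_{A^c}$, $\mathds 1_{A^c}y$, $y$, and $\|y-q\|\mathds 1_{A^c}$ for $q\in\Q^d$. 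The case of general $q$ then follows by density, since $\|y-q\|$ is $1$-Lipschitz in $q$. Up to a $\pi$-null set, $\Gamma$ is Borel.
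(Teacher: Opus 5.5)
Your proof is correct. Its skeleton is the paper's: localise $\pi$ near each $(x_i,y_i)$, form the cyclic competitor, apply \eqref{eq:eq33_duh} to $\gamma=\pi+t_0\eta_k$, and let the localisation shrink. The right-hand side, which is the delicate part, you handle by a genuinely different and simpler route.

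The paper keeps the $\eps$-dependent set $B_{\gamma^\eps}$ and uses Euclidean balls $B_\eps(x_i)\times B_\eps(y_i)$, so that convexity of $B_\eps(y_i)$ pins the conditional barycentre within $\eps$ of $y_i$. The price is that it must then differentiate $\mathds{1}_{B_{\gamma^{\overline{\eps}}}}$, a set depending on $\overline{\eps}$ and on the configuration of points. This is what forces the nesting claim and the countable dense subset of $\supp(\pi)$ in the construction of $\Gamma$.

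You instead enlarge $B_\gamma$ at once to the fixed set $A^c=\{x=b(x)\}$, which is legitimate since the integrand is non-negative. You then move the norm inside the conditional average by Jensen. What remains is to take Lebesgue points of the fixed countable family $\mathds{1}_{A^c}(x)\Vert y-q\Vert$, $q\in\Q^d$, together with $y$; the $1$-Lipschitz dependence on $q$ handles an arbitrary $y_{i+1}$. Nothing geometric about the differentiation basis is used, so the atoms of Lemma~\ref{lem:ldt-gen} suffice, and $\Gamma$ is directly a countable intersection of Borel sets.

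Three small points.
\begin{enumerate}
\item Discard your first right-hand-side estimate via averages of $\mathds{1}_{A^c}y$ and $\mathds{1}_{A^c}$. These control the norm of the integrated barycentre shift, not the integral of its norm; the Jensen bound you then give is the correct one.
\item On the left-hand side you need neither the tensorised lemma nor the projection step of Proposition~\ref{prop:necessary-euclidean}. Because $\pr_1^\sharp\xi_i^k=\pr_1^\sharp\sigma_i^k$, the integral of $\mathds{1}_A(x)u(x)^\intercal y$ against $\xi_i^k$ factorises as $\big(\int\mathds{1}_A u\,\dd\sigma_i^k\big)^\intercal\int y\,\pr_2^\sharp\sigma_{i+1}^k(\dd y)$. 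Hence single-point Lebesgue points (for $\pi$) of $\mathds{1}_A u$ and of $y$ suffice. This is worth doing: requiring $\Gamma\subset\pr_1(\Sigma)\cap\pr_2(\Sigma)$ does not by itself make an arbitrary pair of points of $\Gamma$ a $\pi\otimes\pi$-Lebesgue point.
\item Finiteness of $\C(\gamma)$ also uses $\C(\pi)<\infty$, via $\C(\gamma)\le\C(\pi)+t_0\int\Vert\beta_k\Vert\,\dd\mu$; this is implicit in the statement.
\end{enumerate}
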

	This statement and the inequality \eqref{eq:c-cmon-bar} should be thought as an analogue of $c$-cyclical monotonicity for the weak optimal transport problem \eqref{WOT} in the specific case of the barycentric cost. The proof is in essence similar to that of Theorem~\ref{thm:best-thm-necessary}, although with some technical differences:
	\begin{proof}[Proof of Theorem~\ref{thm:nec-cmon-bar}]
		Let $(x_1, y_1), \dots, (x_n, y_n) \in \Gamma$ where $\Gamma \subset X \times Y$ is a Borel set so that $\pi(\Gamma) = 1$. This set is to defined at the end of the proof by successive refinements, similar to the proof of Theorem~\ref{thm:best-thm-necessary}. For each $i = 1, \dots,n$, let $B_i^\eps \subset \R^d \times \R^d$ the ``ball'' of radius $\eps > 0$ centred at $(x_i, y_i) \in \R^d \times \R^d$. More precisely, to simplify a bit the presentation, assume that $B_i^\eps:=B_\eps(x_i) \times B_\eps(y_i)$ where $B_\eps(x) := \{ z \in \R^d : \Vert z - x \Vert < \eps\}$. We may (and will) assume without loss of generality --- up to refining $\Gamma$ ---  that $\pi(B_i^\eps) > 0$ for all $\eps > 0$ and all $i =1,\dots, n$. Let $\sigma_i^\eps$ be the restriction of $\pi$ on $B_i^\eps$ normalised to a probability measure, so that $\sigma_i^\eps(A) := \pi(B_i^\eps)^{-1}\pi(A \cap B_i^\eps)$ for all Borel set $A \subset \R^d \times \R^d$, and let $\sigma^\eps := \sum_{i = 1}^n \sigma_i^\eps$. We then define $\xi_i^\eps  := \pr_1^\sharp \sigma_i^\eps \otimes \pr_2^\sharp \sigma^\eps_{i+1}$ and $\xi^\eps := \sum_{i =1}^n \xi_i^\eps$. Letting $\gamma^\eps := \pi + t(\xi^\eps - \sigma^\eps)$, then $\gamma^\eps \in \Pi(\mu, \nu)$ for all $\eps > 0$ and any $0 < t < t_\eps$ where $t_\eps := \Vert \frac{d \sigma^\eps}{d \pi} \Vert_{L^\infty(\pi)}$. We apply the inequality \eqref{eq:eq33_duh} to $\gamma \leftarrow \gamma^\eps$, which gives
	 \begin{multline}\label{eq:eq35}
	 	-\int_{A \times \R^d} \frac{(x - \int_{\R^d} z \pi_x(\dd z))^\top y}{\Vert x - \int_{\R^d} z \pi_x(\dd z) \Vert} [\gamma^\eps - \pi](\dd x, \dd y) \geq \\- \int_{B_{\gamma^\eps}} \left\Vert \int_{\R^d} z \gamma_x^\eps(\dd z) - x\right\Vert \mu(\dd x)
	 \end{multline}
	 where we recall that the measurable sets $A$ and $B_{\gamma^\eps}$ are defined by
     $$
     A = \left\{x \in \R^d : x \neq \int_{\R^d} z \pi_x(\dd z) \right\}
     $$
     and 
     \begin{equation}\label{eq:B}
     B_{\gamma^\eps} = \left\{ x \in A^c : x \neq \int_{\R^d} z \gamma^\eps(\dd z)\right\}.
     \end{equation}
     The above inequality \eqref{eq:eq35} further develops to
	 \begin{multline}\label{eq:eq36}
	 		-\int_{A \times \R^d} \frac{(x - \int_{\R^d} z \pi_x(\dd z))^\top y}{\Vert x - \int_{\R^d} z \pi_x(\dd z) \Vert} [\xi^\eps - \pi^\eps](\dd x, \dd y) \geq \\- \int_{B_{\gamma^\eps}} \left\Vert \int_{\R^d} z [\xi^\eps_x - \pi^\eps_x](\dd z) \right\Vert \mu(\dd x)
	 \end{multline} 
     Here, $(\xi^\eps_x)_{x \in \R^d}$ and $(\sigma^\eps_x)_{x \in \R^d}$ are the disintegrations of $\xi^\eps$ and $\sigma^\eps$ with respect to $\mu$. We have $\sigma^\eps_{x} = \sum_{i=1}^n \sigma^\eps_{i, x}$ where for all $i = 1, \dots, n$ the disintegration $(\sigma^\eps_{i, x})_{x \in X}$ of $\sigma^\eps_i$ with respect to $\mu$ reads for $x \in B_\eps(x_i)$
	 \begin{equation}
	 	\sigma^\eps_{i, x}(\dd y)= \frac{1}{\pi(B_i^\eps)}\pi_{x}\restriction_{B_\eps(y_i)}(\dd y)
	\end{equation}
	--- where $\pi_{x}\restriction_{B_\eps(y_i)}$ denotes the restriction of $\pi_x$ to the ball $B_\eps(y_i)$ --- and $\sigma^{\eps}_{i, x} = 0$ for $x \notin B_\eps(x_i)$ otherwise. Likewise, $\xi^\eps = \sum_{i=1}^n \xi^\eps_{i, x}$ where the disintegration $(\xi^\eps_{i, x})_{x \in X}$ of $\xi^\eps_i$ with respect to $\mu$ reads for $x \in B_\eps(x_i)$
	\begin{equation}
		\xi^\eps_{i, x}(\dd y) = \frac{1}{\pi(B_i^\eps)} \frac{\pi_x(B_\eps(y_i))}{\pi(B_{i+1}^\eps)} \int_{B_\eps(x_{i+1})} \pi\restriction_{\R^d \times B_\eps(y_{i+1})}(\dd x', y)
	\end{equation}
    and $\xi^\eps_{i, x} = 0$ for $x \notin B_\eps(x_i)$. To show that \eqref{eq:c-cmon-bar} holds, we are going to take the limit $\eps \to 0$ in the inequality \eqref{eq:eq36}.
    
	 First, we note that, by the standard \textit{Lebesgue differentiation theorem} and just like in the proof of Theorem~\ref{thm:best-thm-necessary}, it holds that --- up to refining the set $\Gamma$:
	 \begin{multline}\label{eq:ldt-simple}
    -\int_{A \times \R^d} \frac{(x - \int_{\R^d} z \pi_x(\dd z))^\top y}{\Vert x - \int_{\R^d} z \pi_x(\dd z) \Vert} [\xi^\eps - \pi^\eps](\dd x, \dd y) \\ \xrightarrow[\eps \to 0]{} \sum_{i | x_i \neq b_i} \frac{(y_{i+1} - y_i)^\top (b_i - x_i)}{\Vert b_i - x_i \Vert}
	 \end{multline}
     where we recall the notation $b_i := \int_{\R^d} z \pi_{x_i}(\dd z)$ for all $i = 1, \dots, n$. This is exactly the left-hand side of the sought-for inequality \eqref{eq:c-cmon-bar}. Now, it remains to prove that we can recover the right-hand side of \eqref{eq:c-cmon-bar} from the right-hand side of \eqref{eq:eq36} in the limit $\eps \to 0$.
     
     Let us first remark that the right-hand side rewrites as 
	 \begin{multline}
	     \int_{B_{\gamma^\eps}} \left\Vert \int_{\R^d} z [\xi^\eps_x - \pi^\eps_x](\dd z) \right\Vert \mu(\dd x) \\ = \sum_{i = 1}^n 	\int_{B_{\gamma^\eps} \cap B_\eps(x_i)} \left\Vert \int_{\R^d} z [\xi^\eps_x - \pi^\eps_x](\dd z) \right\Vert \mu(\dd x)
	 \end{multline}
    Indeed, here we used that $x \mapsto \xi_x^\eps$ and $x \mapsto \pi_x^\eps$ are supported onto the balls $B_\eps(x_i)$'s. Using the explicit formula for the disintegrations given above, we have 
     \begin{multline}
             \int_{B_{\gamma^\eps} \cap B_\eps(x_i)} \left\Vert \int_{\R^d} z [\xi^\eps_x - \pi^\eps_x](\dd z) \right\Vert \mu(\dd x) =\\
             \frac{1}{\pi(B_i^\eps)} \int_{B_i^\eps} \mathds{1}_{B_{\gamma^\eps}}(x) \Bigg\Vert \frac{1}{\pi_x(B_\eps(y_i))}\int_{B_\eps(y_i)}z \pi_x(\dd z) \\- \frac1{\pi(B_{i+1}^\eps)} \int_{B_{i+1}^\eps} y\pi(\dd x', \dd y)\Bigg \Vert \pi(\dd x, \dd y)
     \end{multline}
     For all $\eps > 0$ (small enough) we have 
     \begin{multline}
     \Vert y_{i+1} - y_i \Vert - 2\eps \\\leq \left\Vert \frac{1}{\pi_x(B_\eps(y_i))}\int_{B_\eps(y_i)}z \pi_x(\dd z) - \frac1{\pi(B_{i+1}^\eps)} \int_{B_{i+1}^\eps} y\pi(\dd x', \dd y)\right \Vert \\\leq \Vert y_{i+1} - y_i \Vert + 2\eps
     \end{multline}
     and therefore 
     \begin{multline}
         (\Vert y_{i+1} - y_i \Vert - 2\eps)\frac{1}{\pi(B_i^\eps)} \int_{B_i^\eps} \mathds{1}_{B_{\gamma^\eps}}(x) \pi(\dd x, \dd y)  \\
         \leq
         \int_{B_{\gamma^\eps} \cap B_\eps(x_i)} \left\Vert \int_{\R^d} z [\xi^\eps_x - \pi^\eps_x](\dd z) \right\Vert \mu(\dd x) \\
         \leq (\Vert y_{i+1} - y_i \Vert + 2\eps)\frac{1}{\pi(B_i^\eps)} \int_{B_i^\eps} \mathds{1}_{B_{\gamma^\eps}}(x) \pi(\dd x, \dd y)
     \end{multline}

We now claim that there exists $\overline{\eps} > 0$ small enough so that we have the set inclusion $B_{\gamma^\eps} \subset B_{\gamma^{\overline{\eps}}}$ for all $\eps < \overline{\eps}$. Indeed, let us remark that the set $B_{\gamma^\eps}$ rewrites as the set of points $x \in X$ so that $x = \int_{\R^d} z \pi_x(\dd z)$ and so that $\int_{\R^d} z \xi^\eps_x(\dd z) \neq \int_{\R^d} z \sigma^\eps_x(\dd z)$. This last condition rewrites, using the disintegrations, as 
\begin{equation}\label{eq:b_gamma_eps_}
    \frac{1}{\pi_x(B_\eps(y_i))} \int_{B_\eps(y_i)} z \pi_x(\dd z) \neq \frac{1}{\pi(B_{i+1}^\eps)} \int_{B_{i+1}^\eps} y \pi(\dd x', \dd y)
\end{equation}

     We note that, as $\eps \to 0$, this condition reads (formally for the moment) $y_i \neq y_{i+1}$. Of course, we may assume without loss of generality that $y_i \neq y_{i+1}$. Indeed, if it were that $y_i = y_{i+1}$, then
\begin{equation}
    \int_{B_{\gamma^\eps} \cap B_\eps(x_i)} \left\Vert \int_{\R^d} z [\xi^\eps_x - \pi^\eps_x](\dd z) \right\Vert \mu(\dd x) \leq \Vert y_{i+1} - y_i \Vert + 2\eps = 2\eps
\end{equation}
and thus, in the limit $\eps \to 0$, this integral does not contribute to the sum since it vanishes to zero. Therefore, only the case where $y_{i+1} \neq y_i$ is of importance. We note passing by that the previous result implies trivially that, in the limit $\eps \to 0$, the inequality \eqref{eq:eq36} implies
\begin{equation}
\sum_{i | x_i \neq b_i} \frac{(y_{i+1} - y_i)^\intercal(b_i - x_i)}{\Vert b_i - x_i \Vert} \geq -\sum_{i = 1}^n \Vert y_{i+1} -y_{i} \Vert.
\end{equation}
But the sought-for inequality \eqref{eq:c-cmon-bar} is finer since only the indices $i \in \{1, \dots, n\}$ where $x_i = b_i$ are consider in the sum on the right-hand side. Let us therefore assume that $y_i \neq y_{i+1}$ for all $i = 1, \dots,n$. If $x \in B_{\gamma^{\eps}}$, and using that $y_{i+1} \neq y_i$, then there exists a small enough $\overline{\eps} > 0$ such that \eqref{eq:b_gamma_eps_} holds for all $0 < \eps < \overline{\eps}$. Indeed, this follows from the obvious facts that 
$$
\left\Vert\frac{1}{\pi_x(B_\eps(y_i))} \int_{B_\eps(y_i)} z \pi_x(\dd z) - y_i \right\Vert \leq \eps
$$
and 
$$
\left\Vert \frac{1}{\pi(B_{i+1}^\eps)} \int_{B_{i+1}^\eps} y \pi(\dd x', \dd y) -y_{i+1} \right \Vert \leq \eps
$$
for all $\eps > 0$. Therefore, letting for instance $\overline{\eps} < \frac{\Vert y_{i+1} - y_i \Vert}{2}$, we have the set inclusion $B_{\gamma^{\eps}} \subset B_{\gamma^{\overline{\eps}}}$ for all $0 < \eps < \overline{\eps}$. 

Using this fact, we obtain 
     \begin{multline}
         \limsup_{\eps \to 0}\int_{B_{\gamma^\eps} \cap B_\eps(x_i)} \left\Vert \int_{\R^d} z [\xi^\eps_x - \pi^\eps_x](\dd z) \right\Vert \mu(\dd x) \\\leq \limsup_{\eps \to 0} \frac{\Vert y_{i+1} - y_i\Vert + 2\eps}{\pi(B_i^\eps)} \int_{B_i^\eps} \mathds{1}_{B_{\gamma^{\overline{\eps}}}}(x) \pi(\dd x, \dd y)%\\=\Vert y_{i+1} - y_i\Vert \mathds{1}_{B_{\gamma^{\overline{\eps}}}}(x_i)
     \end{multline}
Now, according to the Lebesgue differentiation theorem, we know that  there exists a set $E \subset \R^d$ so that $\mu(E) = 1$ and so that 
     $$
     \frac1{B_\eps(x)} \int_{B_\eps(x)} \mathds{1}_{B_{\gamma^{\overline{\eps}}}}(x') \mu(\dd x') \xrightarrow[\eps \to 0]{} \mathds{1}_{B_{\gamma^{\overline{\eps}}}}(x)
     $$
     for all $x \in E$. \textbf{If we could ensure that $x_i \in E$ for all $i = 1, \dots,n$}, then we would obtain 
     $$
      \limsup_{\eps \to 0}\int_{B_{\gamma^\eps} \cap B_\eps(x_i)} \left\Vert \int_{\R^d} z [\xi^\eps_x - \pi^\eps_x](\dd z) \right\Vert \mu(\dd x) \leq \Vert y_{i+1} - y_i\Vert \mathds{1}_{B_{\gamma^{\overline{\eps}}}}(x_i) 
     $$
     and thus the claimed inequality \eqref{eq:c-cmon-bar} would immediately follows. Let us discuss how to ensure that this happens. 
     
     First, beware that the measurable set $E \equiv E[n, \mathcal{X}, \overline{\eps}]$ actually depends on the function $\mathds{1}_{B_{\gamma^{\overline{\eps}}}}$ which itself depends on $n$, the family of points $\mathcal{X} := \{(x_i, y_i) : i = 1, \dots, n\}$ under consideration, and $\overline{\eps} > 0$. To simplify the presentation, let us write $B_{\gamma^{\overline{\eps}}} = B[n, \mathcal{X}, \overline{\eps}]$ as well. The idea is then: since any subspace of a metrisable separable space is itself separable, the support $\supp(\pi)$ of the transport plan $\pi$  is separable. Let us then consider $S \subset \supp(\pi)$ to be a dense and countable subset of $\supp(\pi)$. For each $n \in \N$, the set $\mathbb{X}_n = \{\mathcal{X}\}$ of all families $\mathcal{X} = \{(x_1, y_1), \dots, (x_n, y_n) \} \subset S$ of $n$ elements inside $S$ is therefore itself a countable set as a finite product of countable sets. Then, for each $\overline{\eps} > 0$, consider the set 
$$
E^{\overline{\eps}} := \bigcap_{n \in \N} \bigcap_{\mathcal{X} \in \mathbb{X}_n} E[n, \mathcal{X}, \overline{\eps}]
$$
The set $E^{\overline{\eps}}$ is measurable and $\mu(E^{\overline{\eps}}) = 1$. Then, we finally consider 
$$
E := \bigcap_{m \in \N} E^{1/m}
$$ 
The set $E$ is again measurable and $\mu(E) = 1$. We then consider $\Gamma := (E \times \R^d)\cap \Gamma_0$ where $\Gamma_0$ is the $\pi$-full measure set over which the convergence ensured by the Lebesgue differentiation theorem holds in \eqref{eq:ldt-simple}. We claim that the sought-for inequality \eqref{eq:c-cmon-bar} holds over $\Gamma$. This follows from the fact, for any $n \in \N$ and any family 
$$
\mathcal{X} = \{(x_1, y_1), \dots, (x_n, y_n) \} \subset \Gamma \subset (E \times \R^d),$$
(where \textit{a priori} $\mathcal{X} \not\subset S$) and for all $\eta > 0$, there exists a family $\mathcal{X}' \in \mathbb{X}_n$ which is $\eta$-close to $\mathcal{X}$, in the sense that, letting
$$
\mathcal{X}' = \{(x_1', y_1'), \dots, (x_n', y_n') \},
$$ 
we have $\Vert (x_i, y_i) - (x_i', y_i')\Vert < \eta$ for all $i = 1, \dots, n$. Now, let us consider $\eta = \eps/2$. Then, by similar arguments to that of preceding, we have that $B[n, \mathcal{X}, \eps] \subset B[n, \mathcal{X}', \eps/2]$. In particular, selecting $\overline{\eps} = 1/m$ for $m \in \N$ big enough, we obtain
 \begin{equation}
\frac1{\pi(B_i^\eps)} \int_{B_i^\eps} \mathds{1}_{B[n, \mathcal{X}, \overline{\eps}]}(x) \pi(\dd x, \dd y) \leq \frac1{\pi(B_i^\eps)} \int_{B_i^\eps} \mathds{1}_{B[n, \mathcal{X}', \overline{\eps}/2]}(x) \pi(\dd x, \dd y).
\end{equation}
Finally, the right-hand side converges to $\mathds{1}_{B[n, \mathcal{X}', \overline{\eps}/2]}(x_i) \Vert y_{i+1} - y_i\Vert$ since $x_i \in E$. This terminates the proof.

	\end{proof}

    The inequality \eqref{eq:c-cmon-bar} is to be thought as the analogue to the $c$-cyclical monotonicity property in the special case of the barycentric cost. Theorem~\ref{thm:nec-cmon-bar} says if $\pi$ is optimal, then it must verify this analogous property. A natural question is to go the other way around, that is to consider the following question: \textit{Is this true that, if $\pi$ verifies \eqref{eq:c-cmon-bar}, then $\pi$ is optimal for the weak optimal transport with barycentric cost ?} It is rather straightforward that:
    \begin{prop}
        Let $\pi \in \Pi(\mu, \nu)$ has finite, \textit{i.e.} $\C(\pi) < \infty$. Assume that $\mu$ almost-surely all $x \in \R^d$, we have $x \neq \int_{\R^d} z \pi_x(\dd z)$. Also assume that $\int_{\R^d} \Vert y \Vert \nu(\dd y) < \infty$. Then, if there exists $\Gamma \subset \R^d \times \R^d$ so that $\pi(\Gamma) = 1$ and so that for all $n \geq 1$ and all $(x_1, y_1), \dots, (x_n, y_n) \in \Gamma$, we have
        \begin{equation}\label{eq:c-cmon-bar-diff}
        \sum_{i=1}^n \frac{(y_{i+1} - y_i)^\top (b_i - x_i)}{\Vert b_i - x_i\Vert} \geq 0
        \end{equation}
        where $b_i = \int_{\R^d} z \pi_{x_i}(\dd z)$, then $\pi$ is optimal for the weak optimal transport problem \eqref{WOT} with barycentric cost.
    \end{prop}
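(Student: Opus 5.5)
The plan is to reduce the statement to Theorem~\ref{thm:sufficient-wot}, or rather to re-run its proof directly, since the lower-boundedness hypothesis there is not met here. Under the standing assumption that $x \neq b(x) := \int_{\R^d} z \pi_x(\dd z)$ for $\mu$-a.e.\ $x$, the barycentric cost is differentiable at $(x,\pi_x)$ in the sense of Section~\ref{sec:diff}. Its gradient is
\[
g(x,y) := \nabla_\rho C(x,\pi_x)(y) = \frac{y^\top (b(x)-x)}{\Vert b(x)-x\Vert}.
\]
This is jointly Borel, since $x \mapsto \pi_x$ is measurable. Moreover \eqref{eq:c-cmon-bar-diff} is exactly \eqref{c-mon-weak} for this $g$, that is, $L(\eta):=\int g\,\dd\eta \geq 0$ for all $\eta \in \cC_\pi$, and hence, by Remark~\ref{rem:equiv-fin-opt-c-cm}, for all $\eta\in\cF_\pi$.

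Next I check \eqref{eq:int-assum}. Since $|g(x,y)| \leq \Vert y\Vert$, we get $\int |g|\,\dd\gamma \leq \int \Vert y\Vert \,\nu(\dd y) < \infty$ for every $\gamma \in \Pi(\mu,\nu)$. The integrability of \eqref{eq:int-assum} therefore holds unconditionally, and Lemma~\ref{lem:wot-obj-cv-subdiff} gives
\[
\C(\gamma) \geq \C(\pi) + \int g \,\dd(\gamma-\pi)
\]
for every competitor with finite cost. So it suffices to prove $\int g\,\dd\eta \geq 0$ for $\eta = \gamma - \pi \in \cR_\pi$.

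For this I would rerun the Markov chain and Birkhoff argument from the proof of Theorem~\ref{thm:sufficient-ot}, with $c := g$. Normalise $\eta^\pm$, build the chain $(Z_n^-,Z_n^+)$ and the stationary law $\Xi$, and set $f(\omega) = g(z_0^+) - g(z_0^-)$. The key simplification is that $|g(x,y)| \leq \Vert y\Vert$ and $\Vert y\Vert$ is integrable under $m_2 \ll \nu$ (with bounded density). Hence $f \in L^1(\Pro_\Xi)$, and Birkhoff applies directly without any sign condition. Using \eqref{eq:c-cmon-bar-diff} on the corrected cycle $(\ox,\oy),(x_0,y_0),\dots,(x_n,y_n)$ gives
\[
\frac1n\sum_{k<n}\bigl[g(x_k,y_{k+1})-g(x_k,y_k)\bigr] \geq -\frac1n\Bigl[g(\ox,y_0)+g(x_n,\oy)-g(\ox,\oy)-g(x_n,y_n)\Bigr].
\]
The right-hand side is bounded in absolute value by
\[
\frac1n\bigl(\Vert y_0\Vert + 2\Vert \oy\Vert + \Vert y_n\Vert\bigr).
\]
Since $\law(y_n) = m_2$ is integrable, $\Vert y_n\Vert/n \to 0$ in probability, and almost surely along a subsequence by Borel–Cantelli on $\sum_n \Pro(\Vert y_0\Vert > \eps n) < \infty$. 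The same holds trivially for the other terms. Passing to the limit gives $\E_{\Pro_\Xi}[f\mid\mathcal I] \geq 0$ a.s., hence $\C$-linearised $L(\eta) = \E_{\Pro_\Xi}[f] \geq 0$.

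The main obstacle is that the chain's points must lie in $\Gamma$ so that \eqref{eq:c-cmon-bar-diff} can be applied. This holds a.s.\ because $\eta^- \ll \pi$ and each $Z_k^-$ has law $\eta^-$, so $\Pro_\Xi(Z_k^-\in\Gamma\ \forall k) = 1$. The point $(\ox,\oy)$ is chosen in $\Gamma\cap\supp(\eta^-)$ with $\eta^-$-full measure. I also need the finite linear combination to be legitimately a cyclical (or finite) perturbation, which is exactly the identity already used in Theorem~\ref{thm:sufficient-ot}.
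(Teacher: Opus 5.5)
Your proposal is correct and takes essentially the paper's route: the paper also reduces the statement to (the proof of) Theorem~\ref{thm:sufficient-wot}, that is, the Markov chain and Birkhoff argument of Theorem~\ref{thm:sufficient-ot} run with the ``cost'' $c(x,y)=\nabla_\rho C(x,\pi_x)(y)$. The only difference is in the bound used. The paper invokes only the one-sided bound $c(x,y)\geq -\Vert y\Vert$ together with its footnote on costs bounded below by integrable functions, whereas your two-sided bound $|c(x,y)|\leq \Vert y\Vert$ makes $f\in L^1(\Pro_\Xi)$ directly and kills the correction terms without any special care in choosing $(\ox,\oy)$, which you sensibly take in $\Gamma$.
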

    \begin{proof}
        Because for $\mu$ almost-surely all $x \in \R^d$, we have $x \neq \int_{\R^d} z \pi_{x}(\dd z)$, then the weak cost of transport $C$ is differentiable for $\mu$ almost-surely all $x$ with respect to its second variable and its gradient reads 
        $$
        \nabla_\rho C(x, \pi_x)(y) = \dfrac{(\int_{\R^d} z \pi_x(\dd z) - x)^\top y}{\Vert \int_{\R^d} z \pi_x(\dd z) - x \Vert } =: c(x, y)
        $$
        To show that $\pi$ is optimal, we must show that the first-order optimality condition holds, that is
        $$
        \int_{\R^d \times \R^d} c(x,y) [\gamma - \pi](\dd x, \dd y) \geq 0
        $$
        for all $\gamma \in \Pi(\mu, \nu)$ so that $\C(\gamma)<\infty$, \textbf{provided it holds for cyclical perturbations $\eta \in \cC_\pi$ --- as is the essence of \eqref{eq:c-cmon-bar-diff}.} This will follow by applying (the proof of) Theorem~\ref{thm:sufficient-wot}. We therefore only need to check that the ``cost'' $c$ defined above verifies the assumptions of this theorem. This function is real-valued and clearly Borel measurable --- by measurability of $x \mapsto \pi_x$, as already discussed. A bit of discussion is to be made regarding the range of values that the function $c$ above takes. We stated Theorem~\ref{thm:sufficient-wot} under the assumption that $c$ be non-negative. But this was mainly for convenience, and the theorem accommodates to slightly weaker assumptions, as already discussed above --- see the \textit{footnote} at p.~\pageref{fn:theonlyone}. In particular, we have 
        $$
        c(x, y) \geq - \Vert y \Vert := g(y)
        $$
        and $g$ is integrable with respect to $\nu$ since $\nu$ is assumed to have finite first-moment. This terminates the proof.
    \end{proof}
    In the case where there are points $x \in X$ so that $x = \int_{\R^d} z \pi_x(\dd z)$ (with positive $\mu$-measure) the situation is slighty more involved, since to obtain optimality we need to be able to select a family of (extended) subgradients so that \eqref{eq:cond-opt-not-diff} holds. Altogether, there is \textit{a priori} no reasons to obtain \eqref{eq:c-cmon-bar} by some limiting arguments as a sufficient criterion for optimality (which was obtained using a specific kind of subgradients, \textit{i.e.} the one that realises the \textit{supremum} in \eqref{eq:nasty-sub}). We could certainly explore more in detail this approach, but since this section is mainly intended to explain briefly what could be said in the non-differentiable case without going to much in details, we do not comment further on this point.

\subsection{A comment on the entropy-regularised problem}

When the weak cost of transport $C : X \times \cP(Y) \to \R_+$ is (strictly) convex with respect to its second-argument, this may push the (unique) solution of \eqref{WOT} inside the relative interior of $\Pi(\mu, \nu)$ compared to the \eqref{OT} problem whose solution lives on the boundary of the constraint set (in fact, on extreme points).  \textit{In this case, the defining inequality in the $C$-cyclical monotonicity introduced in Definition~\ref{def:C-cm} becomes an equality}. Indeed, the relative interior of the set of transport plan is made of those plans with full support, \textit{i.e.} $\supp(\pi) = X \times Y$. This means that if $\eta \in \cR_\pi$ is an admissible perturbation, then so is its opposite $-\eta$. This is for instance the case for the \textit{entropy-regularised optimal transport} problem, which we recall to be an instance of the weak optimal transport problem with 
$$
C(x, \rho) := \eps \int_{Y} \ln \rho \, d \rho + \int_Y c(x, y) \rho(\dd y)
$$
where $\eps > 0$ is the regularisation parameter. The weak cost of transport defined as such is differentiable in its second variable and its gradient (in the sense of Section~\ref{sec:diff}) reads
$$\nabla_\rho C(x, \rho)(y) = c(x, y) + \eps \ln \rho(y) + \eps.$$ 
As it is well-known, the unique solution to the entropic optimal transport lives in the relative interior of $\Pi(\mu, \nu)$. But then, we obtain that for all $(x_1, y_1), \dots, (x_n, y_n) \in \Gamma$ where $\Gamma \subset X \times Y$ is some set over which $\pi$ concentrates, we have 
\begin{equation}\label{eq:c-cm-ent}
	\boxed{\sum_{i = 1}^n c(x_i, y_{i}) + \eps \sum_{i = 1}^n \ln \pi_{x_i}(y_i) = \sum_{i = 1}^n c(x_i, y_{i+1}) + \eps \sum_{i = 1}^n \ln \pi_{x_i}(y_{i+1}) }
\end{equation}
	where as usual $y_{n+1} := y_1$. \textbf{\textit{The equality \eqref{eq:c-cm-ent} is then the analogue of the cyclical monotonicity property for the entropic optimal transport problem}}. This equality is not at all shocking, since using the well-known duality theory for the entropic optimal transport problem, we know that the optimiser $\pi^*$ is unique and given by the Gibbs measure $$\pi^*(x, y) = \exp\left(-\frac1\eps(c(x, y) - f(x) - g(y))\right)$$ for some entropic Kantorovich potentials $f : X \to \R$ and $g : Y \to \R$. In particular, the equality \eqref{eq:c-cm-ent} is in fact immediately verified. 

\appendix

\section{Proof of Lemma~\ref{lem:ldt-gen}}\label{app:ldt}
In this appendix, we prove Lemma~\ref{lem:ldt-gen} whose statement is repeated below for convenience of the readers. It is not unlikely that this result is known and common knowledge to some audiences, but as it is we were not able to find any reference for it, so that we prove it here.
\begin{lem}\label{lem:ldt-gen-repetita}
	Let $\gamma \in \cP(Z)$ where $Z$ is a Polish space (or in fact only second countable) equipped with its Borel $\sigma$-algebra. Then, for all $z \in Z$, there exists a sequence $(E_k(z))_{k \in \N}$ of Borel sets — all containing $z$ — so that $\gamma(E_k(z)) > 0$ for $\gamma$ almost-surely all $z \in Z$ and all $k \in \N$ and so that for any measurable function $\phi : Z \to \R$ which is integrable with respect to $\gamma$, it holds that 
	\begin{equation}
		\frac{1}{\gamma(E_k(z))} \int_{E_k(z)} \phi(w)  \gamma(\dd w) \cto{k \to \infty} \phi(z)
	\end{equation}
	for $\gamma$-a.s. all point $z \in Z$.
\end{lem}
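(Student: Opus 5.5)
We plan to build the sets $E_k(z)$ from a refining sequence of countable Borel partitions of $Z$ and then invoke the martingale convergence theorem. Since $Z$ is second countable, fix a countable base $(U_j)_{j \in \N}$ of its topology. For each $k$, let $\mathcal{P}_k$ be the finite partition of $Z$ generated by $U_1, \dots, U_k$. Its atoms are the sets $\bigcap_{j \le k} U_j^{\pm}$, where $U^+ := U$ and $U^- := Z \setminus U$. Each $\mathcal{P}_{k+1}$ refines $\mathcal{P}_k$. For $z \in Z$, we set $E_k(z)$ to be the atom of $\mathcal{P}_k$ containing $z$. By construction $z \in E_k(z)$, each $E_k(z)$ is Borel, and $E_{k+1}(z) \subset E_k(z)$.

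Next we dispose of the positivity requirement. The set of points lying in some null atom is $N := \bigcup_k \bigcup \{A \in \mathcal{P}_k : \gamma(A) = 0\}$. This is a countable union of null sets, so $\gamma(N) = 0$, and for every $z \notin N$ we have $\gamma(E_k(z)) > 0$ for all $k$. For points in $N$ the definition of $E_k(z)$ is immaterial, since the statement only concerns $\gamma$-a.e.\ $z$.

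The core step is the martingale argument. Let $\mathcal{F}_k := \sigma(\mathcal{P}_k)$, an increasing filtration on the probability space $(Z, \mathcal{B}(Z), \gamma)$. For $\phi \in L^1(\gamma)$ and $z \notin N$, the conditional expectation has the explicit form
\[
\E_\gamma[\phi \mid \mathcal{F}_k](z) = \frac{1}{\gamma(E_k(z))}\int_{E_k(z)} \phi \, \dd\gamma .
\]
Lévy's upward theorem gives, $\gamma$-a.s., convergence of this sequence to $\E_\gamma[\phi \mid \mathcal{F}_\infty]$, where $\mathcal{F}_\infty := \sigma(\bigcup_k \mathcal{F}_k)$. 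It then suffices to show that $\mathcal{F}_\infty$ equals the Borel $\sigma$-algebra. Each $U_j$ is a union of atoms of $\mathcal{P}_j$, so $U_j \in \mathcal{F}_j$. Every open set is a countable union of the $U_j$, hence lies in $\mathcal{F}_\infty$, and therefore $\mathcal{F}_\infty = \mathcal{B}(Z)$. Consequently $\E_\gamma[\phi \mid \mathcal{F}_\infty] = \phi$ $\gamma$-a.s., which is the claim.

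The main subtlety is the order of quantifiers. The sets $E_k(z)$ do not depend on $\phi$, as required, but the exceptional null set may depend on $\phi$. The statement reads "for any $\phi$ \dots\ for $\gamma$-a.s.\ all $z$", so this dependence is permitted, and it is exactly what the martingale argument delivers. For the later use in Remark~\ref{rem:ldt-gen-tensor-product}, the same construction applies on $Z \times Z$ with the product partitions $\mathcal{P}_k \times \mathcal{P}_k$, whose atoms are $E_k(z) \times E_k(z')$. These again generate the Borel $\sigma$-algebra of $Z \times Z$, so the argument goes through for $\gamma \otimes \gamma$ unchanged.
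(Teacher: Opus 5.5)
Your proposal is correct and follows the paper's own argument: take the atoms of the $\sigma$-algebra generated by $U_1,\dots,U_k$ from a countable base, discard the countably many $\gamma$-null atoms, identify the atom averages with $\E[\phi \mid \cF_k]$, and conclude by L\'evy's upward theorem. You also spell out why $\sigma\bigl(\bigcup_k \cF_k\bigr)$ is the full Borel $\sigma$-algebra, a point the paper only asserts.
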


\begin{proof}[Proof of Lemma~\ref{lem:ldt-gen}]
	As a Polish space, $Z$ is second countable, meaning that its topology has a countable base  $(U_k)_{k \in \N}$. Let us define $\cF_k := \sigma(U_1, \dots, U_k)$ the $\sigma-$algebra generated by $U_1, \dots, U_k$, so that $\mathcal{B}(Z) = \sigma( \bigcup_{k \geq 1} \cF_k)$ where $\mathcal{B}(Z)$ is the Borel $\sigma$-algebra on $Z$. Since $\cF_k$ is finitely generated by $k$ elements, it has at most $2^k$ atoms $e_1, e_2, \dots, e_{n_k} \in \cF_k$. Then, let us define $E_k(z)$ for all $z \in Z$ to be the atom in $\cF_k$ that contains $z$. We remark that for $\gamma$ almost-surely all $z \in Z$, we have $\gamma(E_k(z)) > 0$. Indeed, if $A_k := \{z \in Z : \gamma(E_k(z)) = 0\}$, then $A_k = \cup_{i \in I_k} e_i$ where $I_k \subset \{1, \dots, n_k\}$ is the set of indices so that $i \in I_k$ if $\gamma(e_i) = 0$. Since the atoms are disjoint by definition, we have $\gamma(A_k) = \sum_{i \in I_k} \gamma(e_i) = 0$. By considering the measurable set $A = \cup_{k \in \N} A_k \in \mathcal{B}(Z)$, we obtain that for $\gamma$ almost-surely all $z \in Z$ and for all $k \in \N$ we have $\gamma(E_k(z)) > 0$. Then, for any function $\phi : Z \to \extendedR$ which is integrable with respect to $\gamma$, we have that $\E[\phi | \cF_k]$ is constant and precisely given by 
	\begin{equation}
		\E[\phi | \cF_k] = \frac{1}{\gamma(E_k(z))} \int_{E_k(z)} \phi(w) \, \gamma(\dd w)
	\end{equation}
	Now, it is a known fact that $\E[\phi | \cF_k](z) \to \phi(z)$ as $k \to \infty$  for $\gamma$-a.s. all point $z \in Z$. Indeed, this follows from \textit{Levy upwards theorem} --- which is a peculiar case of the celebrated Doob's martingale convergence theorems, see \textit{e.g.} \cite{billingsleyProbabilityMeasure1995}.
\end{proof}
\begin{rem}\label{rem:ldt-gen-tensor-product}
	We remark that the exact same construction yields that for all $\Phi \in L^1(\gamma \otimes \gamma)$, it holds that 
	\begin{equation}
		\frac{1}{\gamma \otimes \gamma (E_k(z) \times E_k(t))} \int_{E_k(z) \times E_k(t)} \Phi d \gamma \otimes \gamma \cto{k \to \infty} \Phi(z, t)
	\end{equation}
	for $\gamma \otimes \gamma$-a.s. all couples $(z, t) \in Z \times Z$. Here, we emphasise that we can consider the \textit{same } $E_k$'s are constructed above. Indeed, one replaces in the above proof $\cF_k$ by the finitely-generated $\sigma$-algebra $$\mathcal{G}_k := \sigma(\{ U_i \times U_j \,|\, i,j=1, \dots, k\})$$ (which in fact is the product $\sigma$-algebra $\cF_k \otimes \cF_k$). The atoms of $\mathcal{G}_k$ are the product of the atoms of $\cF_k$, and the claim follows.
\end{rem}

\bigskip

{\bfseries Acknowledgements.} We acknowledge the financial support of European Research Council (ERC) under the European Union’s
Horizon 2020 Research and Innovation Programme – Grant Agreement n°101077204 HighLEAP. L.N. benefited from the support of the FMJH Program PGMO and from the ANR project GOTA (ANR-23-CE46-0001).

\printbibliography
\end{document}